\theoremstyle{plain}
\newtheorem{theo}{Theorem}[section]
\newtheorem{lem}[theo]{Lemma}
\newtheorem{cor}[theo]{Corollary}
\newtheorem{prop}[theo]{Proposition}
\theoremstyle{definition}
\newtheorem{definition}{Definition}
\theoremstyle{remark}
\newtheorem{remark}[theo]{Remark}
\DeclareMathOperator{\lcm}{lcm}
\DeclareMathOperator{\sgn}{sgn}
\DeclareMathOperator{\Val}{Val}
\DeclareMathOperator{\Pic}{Pic}
\let\leq\leqslant
\let\geq\geqslant
\newcommand{\R}{\ensuremath{\mathbf{R}}}
\newcommand{\Z}{\ensuremath{\mathbf{Z}}}
\newcommand{\Q}{\ensuremath{\mathbf{Q}}}
\title[Squareful numbers in hyperplanes]{Squareful numbers in hyperplanes}
\author{Karl Van Valckenborgh}
\address{K.U.Leuven, Department of Mathematics, Celestijnenlaan 200B, 3001 Leuven, Belgium}
\email{karl.vanvalckenborgh@wis.kuleuven.be}
\date{\today}
\begin{document}

\begin{abstract}
Let $n \geqslant 4$. In this article, we will determine the asymptotic behaviour of the size of the set of integral points $(a_0:\ldots :a_n)$ on the hyperplane $\sum_{i=0}^{n}X_i=0$ in $\mathbf{P}^n$ such that $a_i$ is squareful (an integer $a$ is called squareful if the exponent of each prime divisor of $a$ is at least two) and $|a_i|\leqslant B$ for each $i \in \{0,\ldots,n\}$, when $B$ goes to infinity. For this, we will use the classical Hardy-Littlewood method. The result obtained supports a possible generalization of the Batyrev-Manin program to Fano orbifolds. 
\end{abstract}
\maketitle
\section{Introduction}\label{s:intro}
The problem we consider can be related to a question Campana posed concerning rational points on orbifolds. A good overview is given for example in \cite{abramovich}, \cite{poonen} or \cite{Campana}. Examining the orbifold $(\mathbf{P}^1,\Delta)$ with $\Q$-divisor $\Delta= 1/2\cdot[0]+1/2\cdot[1]+1/2\cdot[\infty]$, it is explained for example in \cite{poonen} why it is reasonable to expect that the set $\{(a_1,a_2,a_3) \in \Z^3 \mid a_1+a_2=a_3,\ a_1,a_2,a_3\ \text{ are squareful, }\max\{|a_1|,|a_2|,|a_3|\}\leqslant B,\ \gcd(a_1,a_2,a_3)=1\}$ will asymptotically behave as $C\cdot B^{1/2}$ as $B$ tends to infinity. 

Since this question turns out to be too difficult at the moment, we generalize to a higher-dimensional analogue $(\mathbf{P}^{n-1},\Delta)$, where now $\Delta$ is the $\Q$-divisor $\Delta=1/2\cdot[H_{0}]+\dots +1/2\cdot[H_{n}]$ with $H_i$ the hyperplane defined by $X_i=0$ for $i\in \{0,\ldots,n-1\}$ and $H_{n}$ defined by $X_0+\dots + X_{n-1}=0$. Analogously as in the one-dimensional case, a point $P=(a_0:\ldots:a_{n-1}) \in \mathbf{P}^{n-1}(\Q)$ (we assume $a_0,\ldots,a_{n-1} \in \Z$ and $\gcd(a_0,\ldots,a_{n-1})=1$) will be called a rational point in Campana's sense on $(\mathbf{P}^{n-1},\Delta)$ if for every $i \in \{0,\ldots, n\}$ and every prime $p$ for which the reduction of $P$ is contained in the reduction of $H_i$ modulo $p$, we have that $i_p(P,H_i)\geqslant 2$, where $i_p(P,H_i)$ denotes the intersection number of $P$ and $H_i$ above the prime $p$. These conditions will be satisfied if $a_i$ is squareful for every $i \in \{0,\ldots,n-1\}$ and if $\sum_{i=0}^{n-1}a_i$ is also squareful. We denote the set of all such rational points by $(\mathbf{P}^{n-1},\Delta)(\Q)$; the set of the points $P\in (\mathbf{P}^{n-1},\Delta)(\Q)$ of bounded height (using the height function $H(x_0:\ldots:x_{n-1})=\max\{|x_0|,\ldots,|x_{n-1}|,|\sum_{i=0}^{n-1}x_i|\}$) is denoted by $(\mathbf{P}^{n-1},\Delta)(\Q)_{\leqslant B}$. 

Defining the canonical divisor of the orbifold $(\mathbf{P}^{n-1},\Delta)$ as $K_{(\mathbf{P}^{n-1},\Delta)}=K_{\mathbf{P}^{n-1}}+\Delta$, we have that $K_{(\mathbf{P}^{n-1},\Delta)}\sim (-(n-1)/2)\cdot H$ in $\Pic(\mathbf{P}^{n-1})_{\Q}$, where $H$ is the hyperplane class of $\mathbf{P}^{n-1}$. Since the height function we use is associated to $H$, a very na\"ive generalization of Manin's conjecture would predict that $\#(\mathbf{P}^{n-1},\Delta)(\Q)_{\leqslant B} \sim C\cdot B^{(n-1)/2}$ for some constant $C>0$, as $B$ tends to infinity. Our main goal is to prove the following theorem.
\begin{theo}\label{theo:main_theorem}
For $n\geqslant 4$, ther exists a $\delta>0$ so that 
$$
\#(\mathbf{P}^{n-1},\Delta)(\Q)_{\leqslant B}= C\cdot B^{(n-1)/2}+O\left(B^{(n-1)/2-\delta}\right)
$$
for some constant $C>0$. 
\end{theo}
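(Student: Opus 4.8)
The plan is to apply the Hardy–Littlewood circle method to count integral solutions of a system of equations encoding the squarefulness conditions, and then pass from that count to the count of Campana points. Concretely, since every squareful integer $a$ can be written as $a = \pm b^2 c^3$ with $c$ squarefree (uniquely, after choosing the sign), I would parametrise each coordinate as $a_i = \sigma_i x_i^2 y_i^3$, so that the locus $\sum a_i = 0$ becomes the affine variety
\begin{equation*}
\sum_{i=0}^{n-1} \sigma_i x_i^2 y_i^3 = 0
\end{equation*}
and counting points $(a_0:\ldots:a_{n-1})$ with $\max|a_i|\le B$ and each $a_i$ squareful amounts (up to the coprimality condition and the ambiguity in the representation) to counting integral points $(x_i,y_i)$ on this variety with each $|x_i^2 y_i^3|\le B$. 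The first step is therefore a careful bookkeeping lemma: reduce $\#(\mathbf{P}^{n-1},\Delta)(\Q)_{\le B}$ to a sum, over the finitely many sign vectors and over the squarefree parts, of archimedean-boxed solution counts, controlling the greatest-common-divisor/coprimality constraints by Möbius inversion and checking that these auxiliary sums converge and contribute to the constant $C$ and to the error term only.

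**The circle-method core.** The heart of the proof is an asymptotic formula, with a power-saving error term, for the number $N(B)$ of integral points $(x_0,\dots,x_{n-1},y_0,\dots,y_{n-1})$ in a suitable box with $\sum_i \sigma_i x_i^2 y_i^3 = 0$. I would write this as
\begin{equation*}
N(B) = \int_0^1 \prod_{i=0}^{n-1} S_i(\alpha)\, d\alpha, \qquad S_i(\alpha) = \sum_{x_i,y_i} e\bigl(\alpha\, \sigma_i x_i^2 y_i^3\bigr),
\end{equation*}
the inner sums ranging over the dyadic-type boxes coming from the previous step. The exponential sum attached to a single coordinate is a two-variable sum; summing first over $x_i$ gives a Gauss-sum/Weyl-sum in one variable of modulus roughly $B^{1/2}/|y_i|^{3/2}$, and then summing over $y_i$. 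Then: (i) on the major arcs $\mathfrak{M}$ around rationals $a/q$ with $q$ small, standard estimates replace each $S_i$ by a product of a local density at $q$ times an archimedean integral, and assembling them yields the singular series $\mathfrak{S}$ and the singular integral $\mathfrak{I}$, giving a main term of order $B^{(n-1)/2}$ — note each pair $(x_i,y_i)$ ranges over $\asymp B^{1/2}$ values when $y_i$ is bounded, and the single equation removes one unit of the relevant scaling, which is precisely the heuristic behind the exponent $(n-1)/2$; one must check $\mathfrak{S}>0$ and $\mathfrak{I}>0$, i.e. solubility everywhere locally and over $\R$, which holds for $n\ge 4$ because the form $\sum \pm x_i^2 y_i^3$ is highly non-degenerate. (ii) On the minor arcs one needs the bound $\int_{\mathfrak{m}} \prod_i |S_i(\alpha)|\, d\alpha = O(B^{(n-1)/2-\delta})$; this is where having $n\ge 4$ coordinates (hence at least four independent exponential sums, or a mean-value/$L^4$ argument over four of them together with a pointwise Weyl bound on the rest) gives enough cancellation.

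**The main obstacle.** The principal difficulty I anticipate is the minor-arc estimate for these mixed-degree sums $S_i(\alpha)=\sum_{x,y} e(\alpha\, x^2 y^3)$: the two-variable structure means one cannot simply quote the classical Weyl inequality for a fixed-degree polynomial in one variable. The strategy I would pursue is to sum over $x$ for fixed $y$ to obtain a quadratic Weyl/Gauss sum — which is well understood and gives square-root cancellation unless $\alpha y^3$ is close to a rational with small denominator — and then exploit the extra average over $y$: for $\alpha$ on the minor arcs, $\alpha y^3$ cannot be simultaneously near a small-denominator rational for many $y$, so the $y$-average recovers a power saving. Combining this with either a fourth-moment estimate (Hua-type) in the variables of four of the coordinates, or Hölder's inequality together with the pointwise bound on the remaining $n-1$ or so coordinates, should close the minor-arc bound for $n\ge 4$. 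A secondary technical point, which I would handle in the bookkeeping step, is that the squareful representation $a=\sigma b^2 c^3$ is not unique when $c$ has common factors contributing sixth powers; carefully pinning down a canonical representative (e.g. requiring $c$ squarefree and absorbing any residual sixth-power ambiguity) and tracking the resulting convergent Euler product is essential so that the final constant $C$ is correct and positive.
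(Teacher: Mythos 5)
Your proposal follows essentially the same route as the paper: lift the problem to counting integral solutions of $\sum_i x_i^2 y_i^3=0$ with each $y_i$ squarefree, apply the circle method with the mixed exponential sums $S_i(\alpha)=\sum_{x,y}\mu^2(y)\,e(\alpha x^2y^3)$ (major arcs by fixing $\underline{y}$ and treating a diagonal quadratic form in $\underline{x}$; minor arcs by H\"older combined with a Hua-type fourth moment $\int_0^1|S_j|^4\ll B^{1+\varepsilon}$ for four of the sums and a pointwise Weyl/Gauss-sum saving on the rest), and remove the coprimality condition by a two-vector M\"obius inversion --- which is exactly what the paper does. One small inconsistency to fix: your displayed equation $\sum_{i=0}^{n-1}\sigma_i x_i^2y_i^3=0$ has only $n$ terms, whereas the correct lift (encoding the squarefulness of $\sum_{i=0}^{n-1}a_i$ via an extra variable $a_n=-\sum a_i$) has $n+1$ terms; your own exponent heuristic for $(n-1)/2$ silently assumes the latter, so the indexing should read $\sum_{i=0}^{n}$. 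Also, once $y$ is required to be squarefree the decomposition $a=x^2y^3$ has no residual sixth-power ambiguity (only the sign of $x$ is free), and a final short step is needed to show the points with a zero coordinate are negligible --- the paper does this by induction on $n$ using the $L^4$ bound as the base case.
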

Moreover, in Section \ref{sec:orbi} we will give an explicit description of the constant $C$ and examine the distribution of rational points on the orbifold $(\mathbf{P}^{n-1},\Delta)$.

I would like to express my gratitude to my advisor professor Emmanuel Peyre for the many helpful conversations concerning these subjects, and to the section of Number Theory of the department of Mathematics of the University of Bristol (in particular professor Tim Browning and professor Trevor Wooley) for the useful tips concerning the circle method. Also, I would like to thank the reviewers for some interesting comments which led to substantial improvements of the result and the presentation. 
\section{Description of the proof}
Throughout the article, we will use the following notations. 

We will denote the $(n+1)$-tuple $(x_0,\ldots,x_n) \in A^{n+1}$ for any ring $A$ by $\underline{x}$. For the nonzero integers we use the notation $\Z_0$, i.e. $\Z_0=\Z\setminus\{0\}$. If there exists a constant $C>0$ such that $|f(x)| \leqslant C g(x)$ for real-valued functions $f$ and $g$ with $g$ only taking positive values, we write $f(x) \ll g(x)$ or $f(x)=O(g(x))$. If $C$ depends on other parameters, this will be denoted explicitly when this dependence is important for the computations. We will write $f(x) \sim g(x)$ if $f(x)/g(x)$ tends to one if $x$ goes to infinity. Also, we allow the small positive constant $\varepsilon$ to take different values at different points of the arguments. Finally, for any $\alpha \in \R$ we will write $e(\alpha)=\exp(2\pi i \alpha)$. 

To prove Theorem \ref{theo:main_theorem}, we first restrict ourselves to the set of points $(a_0:\ldots:a_{n-1})\in (\mathbf{P}^{n-1},\Delta)(\Q)$ for which $a_{i}\neq 0$ for each $i\in \{0,\ldots,n-1\}$ and $\sum_{i=0}^{n-1}a_i\neq 0$. We denote this subset by $(\mathbf{P}^{n-1},\Delta)(\Q)^+$. Also, by $(\mathbf{P}^{n-1},\Delta)(\Q)_{\leqslant B}^+$ we indicate the intersection of $(\mathbf{P}^{n-1},\Delta)(\Q)^+$ with $(\mathbf{P}^{n-1},\Delta)(\Q)_{\leqslant B}$.

From the definition of $(\mathbf{P}^{n-1},\Delta)(\Q)$, it follows that we can identify $(\mathbf{P}^{n-1},\Delta)(\Q)_{\leqslant B}^+$ with the set
\begin{multline*}
\Bigl\{(a_0:\ldots:a_n)\in H(\Q): a_i\in \Z_0\text{ and }a_i\text{ is squareful},\ \gcd(a_0,\ldots,a_n)=1,\ \max_{0\leqslant i\leqslant n}|a_i|\leqslant B\Bigr\},
\end{multline*}
where $H\subset \mathbf{P}^n$ is the hyperplane defined by $X_0+\cdots+X_n=0$.

Since a squareful integer can be written \lq uniquely\rq\ as $x^2y^3$ where $y$ is squarefree (this representation is unique up to the sign of $x$), the latter set in turn corresponds to 
\begin{multline}\label{eq:orbifold} 
\Bigl\{(x_0^2y_0^3:\ldots:x_n^2y_n^3)\in H(\Q): x_i,y_i \in \Z_0\text{ and } y_i \text{ is squarefree},\\ \gcd(x_0y_0,\ldots,x_ny_n)=1,\ \max_{0\leqslant i\leqslant n}|x_i^2y_i^3|\leqslant B  \Bigr\}.
\end{multline}
%As explained in the introduction, we get that $(\mathbf{P}^{n-1},\Delta)(\Q)_{\leqslant B}^{+}$ is related to the set of $n+1$-tuples $\underline{a}\in \Z^{n+1}$ such that $\sum_{i=0}^{n}a_i=0$, $\gcd(a_0,\ldots,a_n)=1$, $\max_{0\leqslant i\leqslant n}|a_i|\leqslant B$ and $a_i$ is assumed to be squareful for each $i \in \{0,\ldots,n\}$. 
We also introduce the following definition. 
\begin{definition}\label{different sets}
We define $M(B)$ as the set 
\begin{equation*}
\Biggl\{(\underline{x},\underline{y})\in \Z_0^{2n+2}: \sum_{i=0}^n x_i^2y_i^3=0,\ \gcd(x_0y_0,\ldots,x_ny_n)=1,\ \max_{0\leqslant i\leqslant n}|x_i^2y_i^3|\leqslant B,\ \prod_{i=0}^n\mu^2(|y_i|)\neq0 \Biggr\}.
\end{equation*}
Also, by $M_{\underline{a},t}(B)$ we denote the set
\begin{equation*}
\Biggl\{(\underline{x},\underline{y})\in \Z_0^{2n+2}: \sum_{i=0}^n a_ix_i^2y_i^3=t,\  \max_{0\leqslant i\leqslant n}|a_ix_i^2y_i^3|\leqslant B,\ \prod_{i=0}^n\mu^2(|y_i|)\neq0 \Biggr\},
\end{equation*}
where $a_{0},\ldots,a_{n},t \in \Z$ are fixed, $\gcd(a_{0},\ldots,a_n)=1$ and $\prod_{i=0}^{n}a_{i}\neq 0$. (Note that for any integer $y\in \Z$, the condition $\mu^2(|y_i|)\neq 0$ means that $y_i$ is squarefree.)
\end{definition}

As a first step in the proof, we will use the classical Hardy-Littlewood circle method to determine an expression for the cardinality of the set $M_{\underline{a},t}(B)$. From this result, we will derive an asymptotic formula for $\#M(B)$: indeed, we see that $M(B)$ is a subset of $M_{(1,\ldots,1),0}(B)$, with the additional gcd condition $\gcd(x_0y_0,\ldots,x_ny_n)=1$ on the solutions. We will take this gcd condition into account using an adapted version of the M\"obius inversion.

Identifying $(\mathbf{P}^{n-1},\Delta)(\Q)_{\leqslant B}^+$ with \eqref{eq:orbifold}, it readily follows that 
$$
\#(\mathbf{P}^{n-1},\Delta)(\Q)_{\leqslant B}^+=\frac{1}{2^{n+2}}\#M(B),
$$
which implies that an asymptotic formula for $\#M(B)$ induces an asymptotic formula for $\#(\mathbf{P}^{n-1},\Delta)(\Q)_{\leqslant B}^+$. 

Finally, we will explain why this result suffices to prove Theorem \ref{theo:main_theorem}.

\section{Calculating $\#M_{\underline{a},t}(B)$}
Let us first fix the framework of the circle method. 

Let $T$ be $\R/\Z$. For $0<\Delta \leqslant 1$ and $P\geqslant1$ (we always suppose $B \geqslant 1$), we define $\mathfrak{M}(\Delta,q,a)$ as the image in $T$ of $\{\alpha \in \R \ |\ |\alpha-a/q|<P^{\Delta-2}\}$ with $a,q \in \Z$ and 
$$
\mathfrak{M}(\Delta)=\mathop{\bigcup_{{1\leqslant a\leqslant q\leqslant P^{\Delta}}}}_{\gcd(a,q)=1}\mathfrak{M}(\Delta,q,a).
$$
We call $\mathfrak{M}(\Delta)$ the union of the \textit{major arcs} and $T \setminus \mathfrak{M}(\Delta)=\mathfrak{m}(\Delta)$ the union of the \textit{minor arcs}. We shall clarify the constraint on the constant $\Delta$ and the dependence of $P$ on $B$ in Proposition \ref{prop:major} and Theorem \ref{theorem_major}. 
  
The circle method calculates $\#M_{\underline{a},t}(B)$ by integrating an exponential sum over $T$, namely
\begin{equation}\label{eq:M_{a,t}(B)}
\#M_{\underline{a},t}(B)=\int_{T}\mathop{\sum_{1\leqslant |a_{i}x_{i}^2y_{i}^3|\leqslant B}}_{i=0,\ldots,n}\left(\prod_{i=0}^{n}\mu^2(|y_{i}|)\right)e(\alpha f(\underline{x},\underline{y}))d\alpha,
\end{equation}
where $f(\underline{x},\underline{y})=\sum_{i=0}^{n}a_{i}x_{i}^2y_{i}^3-t$. We will denote the integrand of this integral with $E(\alpha)$ and also 
$$
S_{i}(\alpha)= \sum_{1\leqslant |a_{i}x^2y^3|\leqslant B}\mu^2(|y|)e(\alpha a_{i}x^2y^3).
$$
Therefore,
$$
E(\alpha)=e(-\alpha t)\prod_{i=0}^{n}S_i(\alpha).
$$
As usual, the integral over $\mathfrak{M}(\Delta)$ will provide the main term while the integral over $\mathfrak{m}(\Delta)$ will only contribute to the error term. 
\subsection{Major arcs}\label{subsection:major}
We refer to \cite[\S 5]{schmidt} or \cite[Chapter 4]{dav} for a detailed description of the circle method over the major arcs for the classical case of diagonal equations. In order to apply this to $\int_{\mathfrak{M}(\Delta)}E(\alpha)d\alpha$, we will first fix $\underline{y}$ and thus consider the diagonal equation $f(\underline{x},\underline{y})=f_{\underline{y}}(\underline{x})=0$; afterwards we will take the sum of the obtained expression over all admitted $\underline{y}$.

Since we fix $\underline{y}$, we only look at $x_i$ satisfying $1/|a_iy_{i}^3|^{1/2}\leqslant |x_i|\leqslant (B/|a_iy_{i}^3|)^{1/2}$. Most of the time, it suffices to consider only positive $x_i$: we will denote the corresponding interval for positive $x_i$ with $D_i$, i.e.,
\begin{equation}\label{defD_i}
D_{i}=[1/|a_iy_{i}^3|^{1/2},B^{1/2}/|a_iy_{i}^3|^{1/2}].
\end{equation}
We will also use the notation
\begin{equation}\label{B_{a_i,y_i}}
B_{a_{i},y_{i}}=B^{1/2}/|a_{i}y_{i}^3|^{1/2}.
\end{equation}
Note that, since we consider only $\underline{y}$ with $1\leqslant|a_iy_i^3|\leqslant B$, it holds that $1 \leqslant B_{a_{i},y_{i}}\leqslant B^{1/2}$ for each $i\in\{0,\ldots,n\}$.

Because we first wish to examine the exponential sum $E(\alpha)$ (for $\alpha \in \mathfrak{M}(\Delta)$) for some $\underline{y}$ fixed, we denote this part of $E(\alpha)$ by
$$
E_{\underline{y}}(\alpha)=\mathop{\sum_{1/|a_{i}y_{i}^3|^{1/2}\leqslant |x_{i}|\leqslant B_{a_{i},y_{i}}}}_{i=0,\ldots,n} e(\alpha f_{\underline{y}}(\underline{x})).
$$
Furthermore, for every positive integer $q$ and every integer $a$ relatively prime to $q$, we define
\begin{equation}\label{def:sigma}
\sigma_{\underline{y}}\left(\frac{a}{q}\right)=q^{-(n+1)}\sum_{\underline{z} \in (\mathbf{Z}/q\mathbf{Z})^{n+1}} e((af_{\underline{y}}(\underline{z}))/q).
\end{equation}  
and for every $\beta \in \R$,
\begin{equation}
\tau_{\underline{y},B}(\beta)=\int_{D_{0}}\cdots \int_{D_{n}} e(\beta f_{\underline{y}}(\underline{x}))d\underline{x}.
\end{equation}
\begin{prop}\label{prop:E_y} For $\alpha=a/q+\beta \in \mathfrak{M}(\Delta;q,a)$, we have 
$$ 
E_{\underline{y}}(\alpha)=2^{n+1}\sigma_{\underline{y}}\left(\frac{a}{q}\right)\tau_{\underline{y},B}(\beta)+O\left(q\frac{\sum_{i=0}^{n}|a_{i}y_{i}^3|^{1/2}}{\prod_{i=0}^{n}|a_{i}y_{i}^3|^{1/2}}B^{(n+2)/2}P^{\Delta-2}\right)
$$
under the condition $BP^{\Delta-2}\geqslant1$ on $P$ and $\Delta$. 
\end{prop}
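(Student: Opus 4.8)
The plan is to exploit that, with $\underline y$ held fixed, the form $f_{\underline y}(\underline x)=\sum_{i=0}^n a_i x_i^2 y_i^3-t$ is diagonal, so that $E_{\underline y}(\alpha)$ factorises over the coordinates and the problem reduces to the classical major-arc estimate for a single squared variable, as treated in \cite[\S 5]{schmidt} or \cite[Chapter 4]{dav}. First I would pull out $e(-\alpha t)$ and write $E_{\underline y}(\alpha)=e(-\alpha t)\prod_{i=0}^n\sum_{1/|a_iy_i^3|^{1/2}\le|x|\le B_{a_i,y_i}}e(\alpha a_i x^2 y_i^3)$; since each summand depends only on $x^2$, the involution $x\mapsto-x$ turns each factor into $2\,\Sigma_i$, where $\Sigma_i:=\sum_{x\in D_i\cap\Z}e(\alpha a_i x^2 y_i^3)$ (note $0\notin D_i$; when $D_i$ contains only boundedly many integers the sum $\Sigma_i$ is trivially controlled, so this edge case is harmless). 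Thus $E_{\underline y}(\alpha)=2^{n+1}e(-\alpha t)\prod_{i=0}^n\Sigma_i$, and everything is reduced to approximating each $\Sigma_i$.

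Next, for $\alpha=a/q+\beta\in\mathfrak{M}(\Delta;q,a)$ (so $1\le q\le P^\Delta$ and $|\beta|<P^{\Delta-2}$) I would split $x=z+qm$ with $z$ running through residues modulo $q$; using $a a_i x^2 y_i^3\equiv a a_i z^2 y_i^3\pmod q$ gives $\Sigma_i=\sum_{z\bmod q}e(a a_i z^2 y_i^3/q)\sum_{m:\,z+qm\in D_i}e(\beta a_i(z+qm)^2 y_i^3)$. The inner sum is then compared to its integral via the standard estimate $\sum_m g(m)=\int g(u)\,du+O(1+\int|g'(u)|\,du)$, valid for $g\in C^1$ with $|g|\le1$: with $g(u)=e(\beta a_i(z+qu)^2 y_i^3)$ one has $|g'(u)|\ll|\beta|\,|a_iy_i^3|\,q\,|z+qu|\le|\beta|\,|a_iy_i^3|\,q B_{a_i,y_i}$, and the $u$-range has length $\le B_{a_i,y_i}/q$, so $\int|g'|\ll|\beta|\,|a_iy_i^3|\,B_{a_i,y_i}^2=|\beta|B$. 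Substituting $v=z+qu$ (the resulting integral $q^{-1}\int_{D_i}e(\beta a_i v^2 y_i^3)\,dv$ is independent of $z$) and summing over the $q$ residue classes then yields $\Sigma_i=M_i+E_i$ with $M_i:=q^{-1}\bigl(\sum_{z\bmod q}e(a a_i z^2 y_i^3/q)\bigr)\int_{D_i}e(\beta a_i v^2 y_i^3)\,dv$ and $|E_i|\ll q(1+|\beta|B)\ll qBP^{\Delta-2}$, where the final bound uses $|\beta|<P^{\Delta-2}$ together with the hypothesis $BP^{\Delta-2}\ge1$.

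Finally I would multiply back. Factoring the defining sums of $\sigma_{\underline y}$ and $\tau_{\underline y,B}$ along the diagonal structure of $f_{\underline y}$, one checks directly that $2^{n+1}e(-\alpha t)\prod_{i=0}^n M_i=2^{n+1}\sigma_{\underline y}(a/q)\tau_{\underline y,B}(\beta)$. It then remains to control $\prod_i\Sigma_i-\prod_i M_i$: writing $W_i:=B_{a_i,y_i}=B^{1/2}/|a_iy_i^3|^{1/2}$, one has the trivial bounds $|\Sigma_i|\le W_i$ and $|M_i|\le W_i$, so expanding the product over the nonempty subsets of $\{0,\ldots,n\}$ and using $|E_i|\ll qBP^{\Delta-2}$, each term is $\ll qBP^{\Delta-2}\prod_{i\ne j}W_i$ for an appropriate $j$, and summing the finitely many terms gives
\[
\Bigl|\prod_{i=0}^n\Sigma_i-\prod_{i=0}^n M_i\Bigr|\ll qBP^{\Delta-2}\sum_{j=0}^n\prod_{i\ne j}W_i=q\,\frac{\sum_{i=0}^n|a_iy_i^3|^{1/2}}{\prod_{i=0}^n|a_iy_i^3|^{1/2}}\,B^{(n+2)/2}P^{\Delta-2}.
\]
Multiplying through by the absolute constant $2^{n+1}$ yields exactly the error term in the statement.

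The step I expect to need the most care is the sum-to-integral comparison above: since $|a_iy_i^3|$ can be as large as $B$, the error must be recorded with its precise dependence on $\underline y$ (this is the source of the factor $\sum_i|a_iy_i^3|^{1/2}/\prod_i|a_iy_i^3|^{1/2}$), because Proposition \ref{prop:E_y} will subsequently be summed over all admissible $\underline y$; once this dependence is tracked, the remaining estimates are routine.
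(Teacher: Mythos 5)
Your proposal is correct and follows essentially the same route as the paper: factor $E_{\underline y}(\alpha)$ over the diagonal coordinates and use the $\pm$ symmetry to extract $2^{n+1}$, split each one-variable sum into residue classes modulo $q$, compare the inner sum to an integral via Euler summation (obtaining the error $O(1+|\beta|B)$), and then telescope the resulting product using the trivial bounds $|\Sigma_i|,|M_i|\le B_{a_i,y_i}$; the only cosmetic difference is that you record the product error as a sum $\sum_j\prod_{i\ne j}B_{a_i,y_i}$ while the paper writes it as $(n+1)$ times a max, which is equivalent up to constants and yields the same expression $q\,\dfrac{\sum_i|a_iy_i^3|^{1/2}}{\prod_i|a_iy_i^3|^{1/2}}B^{(n+2)/2}P^{\Delta-2}$ after using $1+|\beta|B\ll BP^{\Delta-2}$ under $BP^{\Delta-2}\ge1$.
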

\begin{proof}
Combining positive and negative signs of $x_i$, we have 
\begin{equation} \label{vaste_y}
E_{\underline{y}}(\alpha)=2^{n+1}e(-\alpha t)\prod_{i=0}^n \sum_{x_{i} \in D_{i}}e(\alpha a_{i}x_{i}^2y_{i}^3).
\end{equation}
For $\alpha = a/q+\beta$, the inner sum over $x_i$ equals 
\begin{equation} \label{innersum}
\sum_{1\leqslant z_{i}\leqslant q} e\left(\frac{a a_{i}z_{i}^2y_{i}^3}{q}\right) \mathop{\sum_{v_{i} \in \mathbf{Z}}}_{qv_{i}+z_{i} \in D_{i}}e(\beta a_{i}(q v_{i}+z_{i})^2y_{i}^3).
\end{equation}
Euler's summation formula (in its simplest version) implies  
\begin{equation*}
\sum_{X\leqslant qv+z\leqslant Y}e(\zeta(qv+z)^2)=\frac{1}{q}\int_{X}^Ye(\zeta \eta^2)d\eta + O\left(1+\frac{Y}{q}|\zeta|qY\right)
\end{equation*}
for any real numbers $0\leqslant X<Y,\ \zeta \in \R,\ q,z \in \mathbf{N}$. Taking $Y=B_{a_i,y_i}$ and $\zeta=\beta a_iy_i^3$ and recalling the definition of $D_i$ in \eqref{defD_i}, we can rewrite \eqref{innersum} as 
\begin{equation*}
\sum_{1\leqslant z_i\leqslant q} e\left(\frac{a a_iz_{i}^2y_{i}^3}{q}\right) \left( \frac{1}{q} \int_{D_i}e(\beta a_ix_i^2y_i^3)dx_i+O(1+|\beta|B)\right).
\end{equation*}
We substitute these expressions successively back into \eqref{vaste_y}, and obtain the desired main term. Using the trivial upper bounds
\begin{equation*}
\left|\sum_{x_i\in D_i}e(\alpha a_ix_i^2y_i^3)\right|+\left|\frac{1}{q}\sum_{1\leqslant z_i\leqslant q} e\left(\frac{a a_iz_{i}^2y_{i}^3}{q}\right)\int_{D_i}e(\beta a_ix_i^2y_i^3)dx_i\right| \ll B_{a_i,y_i},
\end{equation*}
we get the total error term $O\left(q(1+|\beta|B)\max_{0\leqslant i\leqslant n}\prod_{j\neq i}B_{a_j,y_j}\right)$. Using \eqref{B_{a_i,y_i}} and $1+|\beta|B\ll P^{\Delta-2}B$, we complete the proof. 
\end{proof} 
From this result, we can now derive an expression for the integral of $E_{\underline{y}}(\alpha)$ over $\mathfrak{M}(\Delta)$ by first integrating the obtained expression of $E_{\underline{y}}(\alpha)$ in Proposition \ref{prop:E_y} over $\mathfrak{M}(\Delta;q,a)$ and then summing over all admitted $a$ and $q$. 

We first define
\begin{equation*}
\mathfrak{I}_{\underline{\varepsilon},t,B}(L)=\int_{|\gamma|<L}e(-\gamma t/B)d\gamma\int_{[B^{-1/2},1]^{n+1}}e(\gamma\sum_{i=0}^{n}\varepsilon_i x_{i}'^{2})d\underline{x}',
\end{equation*}
(where $\varepsilon_i=\sgn(a_{i}y_{i})$) and  
\begin{equation*}
\mathfrak{S}_{\underline{y},\underline{a},t}(L)=\sum_{q\leqslant L}\sum_{\stackrel{0<\frac{a}{q}\leqslant 1}{\gcd(a,q)=1}}\sigma_{\underline{y}}\left(\frac{a}{q}\right).
\end{equation*}
We have 
\begin{equation*}
\int_{|\beta|<P^{\Delta-2}}\tau_{\underline{y},B}(\beta)d\beta=\frac{B^{(n-1)/2}}{\prod_{i=0}^n|a_iy_i^3|^{1/2}}\mathfrak{I}_{\underline{\varepsilon},t,B}(BP^{\Delta-2})
\end{equation*}
and therefore
\begin{multline}\label{eq:overy}
\int_{\mathfrak{M}(\Delta)}E_{\underline{y}}(\alpha)d\alpha=\\ \frac{2^{n+1}\mathfrak{S}_{\underline{y},\underline{a},t}(P^{\Delta})\mathfrak{I}_{\underline{\varepsilon},t,B}(BP^{\Delta-2})}{\prod_{i=0}^{n}|a_{i}y_{i}^3|^{1/2}}\cdot B^{(n-1)/2}\ + O\left(\frac{\sum_{i=0}^{n}|a_{i}y_{i}^3|^{1/2}}{\prod_{i=0}^n|a_{i}y_{i}^3|^{1/2}}B^{(n+2)/2}P^{5\Delta-4}\right).
\end{multline}
Note that the integral $\mathfrak{I}_{\underline{\varepsilon},t,B}(L)$ only depends on the sign of $\underline{y}$ and $\underline{a}$ and no longer on its actual values. 

Next, we make the coefficient of $B^{(n-1)/2}$ in this expression independent of $B$. We first focus on the factor $\mathfrak{S}_{\underline{y},\underline{a},t}(P^{\Delta})$.
\subsubsection{The singular series}
\begin{lem}\label{lem:singular_series}
We have 
$$
\left|\sigma_{\underline{y}}\left(\frac{a}{q}\right)\right|\ll q^{-(n+1)/2}\cdot \prod_{i=0}^{n}\gcd(a_{i}y_{i}^3,q)^{1/2}.
$$
\end{lem}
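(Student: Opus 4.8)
The quantity $\sigma_{\underline{y}}(a/q)$ is a normalized complete exponential sum attached to the diagonal quadratic form $\sum_{i=0}^n a_i y_i^3 z_i^2$ modulo $q$, so the plan is to reduce it to a product of classical Gauss sums. First I would use the Chinese Remainder Theorem to factor $\sigma_{\underline{y}}(a/q)$ multiplicatively over the prime powers $p^k \| q$: since $f_{\underline{y}}$ is diagonal, the sum over $(\Z/q\Z)^{n+1}$ splits as a product over the coordinates $i$, and each coordinate sum $q^{-1}\sum_{z \in \Z/q\Z} e(a a_i y_i^3 z^2 / q)$ factors over the prime-power components of $q$ by CRT (the linear change of variables needed to recombine the moduli is a unit, hence harmless). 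Thus it suffices to bound, for each prime power $p^k$, the local factor
$$
p^{-k}\sum_{z \bmod p^k} e\!\left(\frac{a a_i y_i^3 z^2}{p^k}\right).
$$

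The second step is the estimation of this local Gauss sum. Writing $d_i = \gcd(a_i y_i^3, p^k)$ — more precisely pulling out the common factor with $p^k$ — one reduces to a Gauss sum with modulus $p^{k}/d_i'$ for the appropriate $d_i' \mid d_i$, which after completing the square and invoking the standard evaluation of quadratic Gauss sums has absolute value at most $(p^k/d_i')^{1/2} \le (p^k)^{1/2} \gcd(a_i y_i^3, p^k)^{1/2} / p^{k}$ up to the $p=2$ nuisance. Multiplying the $n+1$ coordinate bounds together at the prime $p^k$ gives the factor $p^{-k(n+1)/2} \prod_i \gcd(a_i y_i^3, p^k)^{1/2}$, and taking the product over all $p^k \| q$ — using multiplicativity of $\gcd(\cdot, q)$ in $q$ — yields exactly the claimed bound $q^{-(n+1)/2} \prod_{i=0}^n \gcd(a_i y_i^3, q)^{1/2}$.

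The one genuinely delicate point is the prime $p = 2$: quadratic Gauss sums modulo $2^k$ either vanish or have a slightly different modulus in their evaluation (and there is the usual subtlety when $2 \mid a a_i y_i^3$), so the square-root bound must be checked to survive there — it does, possibly after absorbing a bounded multiplicative constant into the implied $\ll$. I would also note that the squarefreeness of $y_i$ is irrelevant for this lemma; the estimate holds for arbitrary integers $a_i y_i^3$, and it is only the shape $\gcd(a_i y_i^3, q)$ that matters. Aside from the $2$-adic bookkeeping, every step is the standard multiplicative reduction plus the elementary theory of Gauss sums, so I expect no serious obstacle.
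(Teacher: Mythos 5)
Your approach is essentially the paper's --- factor the diagonal exponential sum into a product of $n+1$ one-variable quadratic Gauss sums and bound each by $\gcd(a_iy_i^3,q)^{1/2}q^{1/2}$ (using $\gcd(a,q)=1$ to drop the $a$) --- except that the paper skips your CRT reduction and the attendant $p=2$ bookkeeping by directly citing the generalized Gauss sum estimate $\left|\sum_{m \bmod c} e(bm^2/c)\right|\ll\gcd(b,c)^{1/2}c^{1/2}$ for arbitrary composite modulus $c$. One small slip in your displayed chain: the normalized local factor should be $\gcd(a_iy_i^3,p^k)^{1/2}p^{-k/2}$ (the Gauss sum of modulus $p^k/d_i'$ repeats $d_i'$ times, giving $d_i'\cdot(p^k/d_i')^{1/2}/p^k$), not $(p^k/d_i')^{1/2}$, though your final product over primes and coordinates is correct.
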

\begin{proof}
Using elementary properties of generalized Gauss sums (see for example \cite[Chapter 1]{gauss}), we obtain for positive integers $a$ and $c$ that
\begin{equation*}
\left|\sum_{n=0}^{c-1}e\left(\frac{an^2}{c}\right)\right|\ll \gcd(a,c)^{1/2}\sqrt{c}. 
\end{equation*}
Applying this to \eqref{def:sigma} implies the statement. 
\end{proof}
\begin{cor}
For $n\geqslant 4$, it holds that 
\begin{equation}\label{singular_series}
\mathfrak{S}_{\underline{y},\underline{a},t}=\sum_{q=1}^{\infty}\sum_{\stackrel{0<\frac{a}{q}\leqslant 1}{\gcd(a,q)=1}}\sigma_{\underline{y}}\left(\frac{a}{q}\right),
\end{equation}
called \textit{the singular series}, converges absolutely, and
\begin{equation}
\mathfrak{S}_{\underline{y},\underline{a},t}(P^{\Delta})=\mathfrak{S}_{\underline{y},\underline{a},t}+O\left(\frac{\prod_{i=0}^{n}|a_{i}y_{i}^3|^{1/2+\varepsilon}}{\lcm(a_0y_0^3,\ldots,a_ny_n^3)^{2}}\cdot P^{\Delta(-n+3)/2}\right).
\label{eq:singularseries}
\end{equation}
\end{cor}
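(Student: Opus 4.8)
The plan is to prove absolute convergence of the singular series and the stated truncation error essentially by summing the bound from Lemma~\ref{lem:singular_series} over $q$, after first exploiting multiplicativity to reduce to prime powers. First I would note that for fixed $q$ the inner sum over $a$ with $\gcd(a,q)=1$ has at most $q$ terms, so Lemma~\ref{lem:singular_series} gives
$$
\sum_{\stackrel{0<\frac aq\le 1}{\gcd(a,q)=1}}\left|\sigma_{\underline y}\!\left(\frac aq\right)\right|\ll q^{-(n-1)/2}\prod_{i=0}^n\gcd(a_iy_i^3,q)^{1/2}.
$$
The function $q\mapsto q^{-(n-1)/2}\prod_{i=0}^n\gcd(a_iy_i^3,q)^{1/2}$ is multiplicative in $q$, so both the full sum \eqref{singular_series} and the tail $\sum_{q>P^{\Delta}}$ can be analyzed via their Euler products. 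For a prime $p$, the local factor is $\sum_{k\ge 0}p^{-k(n-1)/2}\prod_{i=0}^n\gcd(a_iy_i^3,p^k)^{1/2}$; writing $v_i=v_p(a_iy_i^3)$, the product $\prod_i\gcd(a_iy_i^3,p^k)^{1/2}$ equals $p^{\frac12\sum_i\min(k,v_i)}\le p^{\frac12\sum_i v_i}$, which is bounded, so the local factor is $1+O(p^{-(n-1)/2})$ once $n\ge 4$ (so $(n-1)/2>1$), giving absolute convergence.

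Next I would make the error term explicit. The tail $\sum_{q>P^{\Delta}}q^{-(n-1)/2}\prod_i\gcd(a_iy_i^3,q)^{1/2}$ should be bounded by splitting $q$ according to $d=\gcd\bigl(q,(a_0y_0^3)^\infty\cdots(a_ny_n^3)^\infty\bigr)$ or, more cleanly, by writing the Dirichlet series $\sum_q q^{-s}\prod_i\gcd(a_iy_i^3,q)^{1/2}$ and comparing it with $\zeta(s)$ times a factor supported on primes dividing $\prod_i a_iy_i^3$. The divisor-type contribution of the latter factor is $O_\varepsilon\bigl(\prod_i|a_iy_i^3|^\varepsilon\bigr)$, while the "coprime to everything" part of $q$ contributes the usual tail $\ll (P^{\Delta})^{1-(n-1)/2}=P^{\Delta(3-n)/2}$. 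The remaining task is to track the power of $\prod_i|a_iy_i^3|$: the sharpest estimate for $\gcd(a_iy_i^3,q)^{1/2}$ summed against $q^{-(n-1)/2}$ over $q$ divisible by a fixed divisor $d\mid \lcm(a_0y_0^3,\dots,a_ny_n^3)^{?}$ produces, after optimizing, a saving of $\lcm(a_0y_0^3,\dots,a_ny_n^3)^{-2}$ against a gain of $\prod_i|a_iy_i^3|^{1/2+\varepsilon}$ in the numerator, matching the shape in \eqref{eq:singularseries}; I would obtain this by bounding $\gcd(a_iy_i^3,q)\le\gcd(\lcm_j a_jy_j^3,q)$ when convenient and $\gcd(a_iy_i^3,q)\le |a_iy_i^3|$ otherwise, balancing the two.

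I expect the main obstacle to be the bookkeeping in the \emph{error term}: getting the precise exponents on $\prod_i|a_iy_i^3|$ and on $\lcm(a_0y_0^3,\dots,a_ny_n^3)$ right, since these must be strong enough that when one later sums \eqref{eq:overy} over all admissible $\underline y$ the resulting error stays below the main term $B^{(n-1)/2}$. The convergence itself is routine once $n\ge 4$; the delicate point is that the naive bound $\prod_i\gcd(a_iy_i^3,q)^{1/2}\le\prod_i|a_iy_i^3|^{1/2}$ is too lossy to sum over $\underline y$, so one must extract the extra $\lcm^{-2}$ factor by carefully separating, for each prime $p\mid q$, the contribution according to which of the $v_p(a_iy_i^3)$ are exhausted by $v_p(q)$. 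With that decomposition in hand the estimate \eqref{eq:singularseries} follows by a geometric-series summation over $q$, completing the proof of the corollary.
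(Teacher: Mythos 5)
Your convergence argument via multiplicativity and Euler products is valid and is a genuinely different route from the paper's. The paper instead decomposes the bound $\sum_q q^{-(n-1)/2}\prod_i\gcd(a_iy_i^3,q)^{1/2}$ globally by fixing each $d_i=\gcd(a_iy_i^3,q)$ (which forces $\lcm(d_0,\dots,d_n)\mid q$), substitutes $q=\lcm(d_i)q'$, and controls the resulting sum over divisors $d_i\mid a_iy_i^3$ with the divisor function. For the mere question of convergence when $n\geq 4$, both routes buy essentially the same thing, and yours is arguably cleaner.

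For the quantitative truncation error \eqref{eq:singularseries}, however, the proposal has a real gap. You correctly flag that the naive bound $\prod_i\gcd(a_iy_i^3,q)^{1/2}\le\prod_i|a_iy_i^3|^{1/2}$ is too lossy, and you name the right idea in words --- separate, prime by prime, according to which of the $v_p(a_iy_i^3)$ are exhausted by $v_p(q)$ --- but you never execute it, and the specific ``balancing'' you propose does not produce the stated exponents: the inequality $\gcd(a_iy_i^3,q)\le\gcd(\lcm_j a_jy_j^3,q)$ \emph{raises} rather than lowers the eventual power of the $\lcm$, and the other bound $\gcd(a_iy_i^3,q)\le|a_iy_i^3|$ is the lossy one you just rejected. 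The paper's concrete realization of your idea is the divisor decomposition just described: for fixed $d_i\mid a_iy_i^3$ the tail is
$$(d_0\cdots d_n)^{1/2}\,\lcm(d_0,\dots,d_n)^{-(n-1)/2}\sum_{q'>P^\Delta/\lcm(d_0,\dots,d_n)}q'^{-(n-1)/2}\ll (d_0\cdots d_n)^{1/2}\,\lcm(d_0,\dots,d_n)^{-1}\,(P^\Delta)^{(3-n)/2},$$
and then $\sum_{d_i\mid a_iy_i^3}1\ll\prod_i|a_iy_i^3|^{\varepsilon}$ supplies the $\varepsilon$-power. Since the precise powers of $\prod_i|a_iy_i^3|$ and of $\lcm(a_0y_0^3,\dots,a_ny_n^3)$ are exactly what is later summed over $\underline y$ in Theorem~\ref{theorem_major}, this bookkeeping is the substance of the corollary and cannot be left as a promissory note; you need to actually carry out the decomposition and verify that the claimed exponents emerge.
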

\begin{proof}
From the previous lemma, we deduce that 
\begin{align*}
\mathfrak{S}_{\underline{y},\underline{a},t}&\ll \sum_{q=1}^{\infty}q^{-(n-1)/2}\prod_{i=0}^{n}\gcd(a_iy_i^3,q)^{1/2}\\
&\ll \sum_{\substack{d_i|a_iy_i^3 \\i=0,\ldots,n}}\frac{(d_0\cdots d_n)^{1/2}}{\lcm(d_0,\ldots,d_n)^{(n-1)/2}}\sum_{q=1}^{\infty}q^{-(n-1)/2}.
\end{align*}
Since $n\geqslant 4$, the latter expression converges and we get
\begin{align*}
\mathfrak{S}_{\underline{y},\underline{a},t}&\ll \sum_{\substack{d_i|a_iy_i^3 \\i=0,\ldots,n}}\frac{(d_0\cdots d_n)^{1/2}}{\lcm(a_0y_0^3,\ldots,a_0y_0^3)^{3/2}}\\
&\ll \frac{\prod_{i=0}^n |a_iy_i^3|^{1/2+\varepsilon}}{\lcm(a_0y_0^3,\ldots,a_0y_0^3)^{3/2}}
\end{align*} 
for any $\varepsilon>0$. 
Moreover, we obtain in the same way that
\begin{align*}
\left|\mathfrak{S}_{\underline{y},\underline{a},t}-\mathfrak{S}_{\underline{y},\underline{a},t}(P^{\Delta}) \right|& \leqslant \sum_{q>P^{\Delta}} q^{-(n-1)/2}\prod_{i=0}^{n}\gcd(a_iy_i^3,q)^{1/2}\\
&\ll \sum_{\substack{d_i|a_iy_i^3 \\0\leqslant i\leqslant n}}\frac{(d_0\cdots d_n)^{1/2}}{\lcm(d_0,\ldots,d_n)^{3/2}}\sum_{q>P^{\Delta}/\lcm(d_0,\ldots,d_n)}^{\infty}q^{-(n-1)/2}\\
&\ll\frac{\prod_{i=0}^{n}|a_iy_i^3|^{1/2+\varepsilon}}{\lcm(a_0y_0^3,\ldots,a_ny_n^3)^2}\cdot P^{\Delta(-n+3)/2}.
\end{align*}
\end{proof}
\begin{remark}\label{convergentiesingularseries}
One can prove (see e.g. \cite[Lemma 5.2 - 5.3]{dav}) for $n \geqslant 4$ that $\mathfrak{S}_{\underline{y},\underline{a},t}$ can be written as an Euler product of $p$-adic densities
%\begin{equation}
%\mathfrak{S}_{\underline{y},\underline{a},t}=\prod_{p}C_{p},
%\label{eq:product}
%\end{equation}
%where
$$
\lim_{l\rightarrow \infty} \frac{ \# \{(x_{0},\ldots,x_{n}) \in (\mathbf{Z}/p^l\mathbf{Z})^{n+1}:\sum_{i=0}^{n}a_{i}y_{i}^3x_{i}^2 \equiv t\bmod p^l \}} {p^{ln}}.
$$
\end{remark}
\subsubsection{The singular integral}
Examining $\mathfrak{I}_{\underline{\varepsilon},t,B}(BP^{\Delta-2})$ in (\ref{eq:overy}), we have the following proposition.
\begin{prop}\label{prop:integraal}
For $n\geqslant 3$, we have
\begin{equation}
\mathfrak{I}_{\underline{\varepsilon},t,B}(BP^{\Delta-2})=\mathfrak{I}_{\underline{\varepsilon},t,B}+O\left(B^{(1-n)/2}P^{(\Delta-2)(1-n)/2}\right)
\label{eq:singularintegral}
\end{equation}
with 
$$
\mathfrak{I}_{\underline{\varepsilon},t,B}=\int_{-\infty}^{+\infty}e(-\gamma t/B)d\gamma\int_{[B^{-1/2},1]^{n+1}}e(\gamma\sum_{i=0}^{n}\varepsilon_{i} x_{i}^2)d\underline{x}.
$$
under the condition $BP^{\Delta-2}\geqslant 1$.
\end{prop}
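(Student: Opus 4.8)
The plan is to show that the tail of the $\gamma$-integral defining $\mathfrak{I}_{\underline{\varepsilon},t,B}$ is small, which amounts to a standard estimate for the oscillatory integral
$$
I(\gamma)=\int_{[B^{-1/2},1]^{n+1}}e\Bigl(\gamma\sum_{i=0}^{n}\varepsilon_i x_i^2\Bigr)d\underline{x}=\prod_{i=0}^{n}\int_{B^{-1/2}}^{1}e(\gamma\varepsilon_i x^2)\,dx.
$$
First I would treat each one-dimensional factor $J(\gamma)=\int_{B^{-1/2}}^{1}e(\gamma\varepsilon x^2)\,dx$. Substituting $u=x^2$ gives $J(\gamma)=\tfrac12\int_{B^{-1}}^{1}e(\gamma\varepsilon u)u^{-1/2}\,du$, and integration by parts (or the standard van der Corput / second-derivative bound, since the phase $x\mapsto\gamma\varepsilon x^2$ has second derivative $2|\gamma|$ of fixed sign on the whole interval) yields $|J(\gamma)|\ll|\gamma|^{-1/2}$ uniformly for $|\gamma|\geqslant 1$, with the implied constant absolute. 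Hence $|I(\gamma)|\ll|\gamma|^{-(n+1)/2}$ for $|\gamma|\geqslant 1$.

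Next I would bound the difference between $\mathfrak{I}_{\underline{\varepsilon},t,B}$ and $\mathfrak{I}_{\underline{\varepsilon},t,B}(BP^{\Delta-2})$. Writing $L=BP^{\Delta-2}\geqslant 1$, this difference is $\int_{|\gamma|\geqslant L}e(-\gamma t/B)I(\gamma)\,d\gamma$ in absolute value at most $\ll\int_{L}^{\infty}\gamma^{-(n+1)/2}\,d\gamma\ll L^{(1-n)/2}$, using $n\geqslant 3$ so that the exponent $-(n+1)/2<-1$ makes the integral converge. This is exactly $B^{(1-n)/2}P^{(\Delta-2)(1-n)/2}$, the claimed error term; the same convergence shows that the full integral over $\R$ defining $\mathfrak{I}_{\underline{\varepsilon},t,B}$ is absolutely convergent, so the object on the right-hand side is well-defined.

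The only point requiring a little care — and the main (mild) obstacle — is the behaviour of the factors $J(\gamma)$ near $\gamma$ of size $1$ and the singularity $u^{-1/2}$ at the lower endpoint: one must check that the bound $|J(\gamma)|\ll|\gamma|^{-1/2}$ holds with a constant independent of $B$ despite the lower limit $B^{-1/2}$ shrinking. This is handled by splitting the $u$-integral at $u=|\gamma|^{-1}$: on $[B^{-1},|\gamma|^{-1}]$ one bounds trivially by $\int_0^{|\gamma|^{-1}}u^{-1/2}\,du\ll|\gamma|^{-1/2}$, and on $[|\gamma|^{-1},1]$ one integrates by parts once, the boundary terms and the integral of the derivative of $u^{-1/2}$ both contributing $\ll|\gamma|^{-1/2}$. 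With that in hand the rest is the routine tail estimate above.
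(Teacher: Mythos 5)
Your proof is correct and follows essentially the same route as the paper: bound each one-dimensional factor by $\ll\min\{1,|\gamma|^{-1/2}\}$, take the $(n+1)$-fold product, and estimate the tail $\int_{|\gamma|>BP^{\Delta-2}}|\gamma|^{-(n+1)/2}d\gamma$. The only difference is that the paper simply cites Davenport's \emph{Analytic Methods} for the one-dimensional oscillatory bound, whereas you re-derive it via the substitution $u=x^2$ and a split at $u=|\gamma|^{-1}$; both are fine.
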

%We have 
%\begin{eqnarray*}
%\int_{B^{-1/2}}^{1}e(\gamma \varepsilon_{i}  x_{i}^2)dx_{i}&=&\frac{1}{2}\int_{1/B}^{1} x_{i}'^{-1/2}e( \gamma\varepsilon_{i} x_{i}')dx_{i}'\\
%&=&\frac{\gamma^{-1/2}}{2}\int_{\gamma/B}^{\gamma}x_{i}''^{-1/2}e(\varepsilon_{i} x_{i}'')dx_{i}''.
%\end{eqnarray*}
%This last integral is a bounded function of $\gamma$, using Dirichlet's test for the convergence of an indefinite integral, together with the fact that the integral is absolutely convergent in zero (\cite[Proof of Theorem 4.1]{dav});
\begin{proof}
As proved in \cite[Proof of Theorem 4.1]{dav}, it holds that  
$$
\left| \int_{B^{-1/2}}^{1}e(\gamma\varepsilon_{i} x_{i}^2)dx_{i} \right| \ll \min\{1,|\gamma|^{-1/2}\},
$$
and thus
\begin{equation}\label{eq:upper_bound_int}
\left|\int_{[B^{-1/2},1]^{n+1}}e(\gamma \sum_{i=0}^{n}\varepsilon_{i} x_{i}^2)d\underline{x}\right|\ll \min\{1,|\gamma|^{-1/2}\}^{n+1}.
\end{equation}
This implies that the integral $\mathfrak{I}_{\underline{\varepsilon},t,B}$ converges, since
\begin{equation*}
\left|\mathfrak{I}_{\underline{\varepsilon},t,B}\right|\ll\int_{-\infty}^{+\infty}\min\{1,|\gamma|^{-1/2}\}^{n+1}d\gamma<+\infty.
\end{equation*}
Also,
\begin{equation*}
|\mathfrak{I}_{\underline{\varepsilon},t,B}(BP^{\Delta-2})-\mathfrak{I}_{\underline{\varepsilon},t,B}| 
%& = & \left|\int_{|\gamma|>BP^{\Delta-2}}e(-\gamma t/B)d\gamma\int_{[B^{-1/2},1]^{n+1}}e(\gamma\sum_{i=0}^{n}\varepsilon_{i} x_{i}^2)d\underline{x}\right|\\
\ll \int_{|\gamma|>BP^{\Delta-2}}|\gamma|^{-(n+1)/2}d\gamma 
 \ll  B^{(1-n)/2}P^{(\Delta-2)(1-n)/2}.
\end{equation*}
\end{proof}
Defining \textit{the singular integral} as
\begin{equation}
\mathfrak{I}_{\underline{\varepsilon}}=\int_{-\infty}^{+\infty}d\gamma\int_{[0,1]^{n+1}}e(\gamma\sum_{i=0}^{n}\varepsilon_{i} x_{i}^2)d\underline{x},
\label{eq:singular_integral}
\end{equation}
it follows from the last proof that this integral is also convergent. 
%but it can also be proved that 
%$$
%\mathfrak{I}_{\underline{\varepsilon}}= \mathop{\int_{\sum_{i=0}^n \varepsilon_{i} x_{i}^2=0}}_{0<x_{i}<1} \frac{1}{2x_{0}}dx_{1}\ldots dx_{n}.
%$$
%(See for example \cite{dav} or the section concerning the singular integral in \cite{birch}.)
% Both use the fact that the function 
%$$
%\phi(t)=\mathop{\int_{\sum_{i=0}^{n}\epsilon_{i}x_{i}^2=t}}_{0<x_{i}<1}\frac{1}{2x_{0}}dx_{1}\ldots dx_{n}
%$$
%is a function of bounded variation as a function of $t$. Also, a more general approach can be followed, as in \cite{igusa}.) 
\begin{lem}\label{singuliere_integraal}
It holds that $\mathfrak{I}_{\underline{\varepsilon},t,B}\rightarrow \mathfrak{I}_{\underline{\varepsilon}}$ as $B$ goes to infinity. 
\end{lem}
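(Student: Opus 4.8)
The plan is to read off the limit from the dominated convergence theorem applied in the variable $\gamma$. Introduce the functions
\[
g_B(\gamma)=e(-\gamma t/B)\int_{[B^{-1/2},1]^{n+1}}e\Bigl(\gamma\sum_{i=0}^{n}\varepsilon_{i}x_{i}^2\Bigr)d\underline{x},\qquad
g(\gamma)=\int_{[0,1]^{n+1}}e\Bigl(\gamma\sum_{i=0}^{n}\varepsilon_{i}x_{i}^2\Bigr)d\underline{x},
\]
so that $\mathfrak{I}_{\underline{\varepsilon},t,B}=\int_{-\infty}^{+\infty}g_B(\gamma)\,d\gamma$ and $\mathfrak{I}_{\underline{\varepsilon}}=\int_{-\infty}^{+\infty}g(\gamma)\,d\gamma$. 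It then suffices to establish pointwise convergence $g_B(\gamma)\to g(\gamma)$ for each fixed $\gamma$, together with a single integrable majorant for the whole family $\{g_B\}_{B\ge 1}$.

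For the pointwise convergence, fix $\gamma\in\R$. On the one hand $e(-\gamma t/B)\to1$ as $B\to\infty$, with $|e(-\gamma t/B)-1|\ll|\gamma|/B$. On the other hand, $[0,1]^{n+1}\setminus[B^{-1/2},1]^{n+1}$ is contained in the union over $i$ of the slabs $\{x\in[0,1]^{n+1}:x_i< B^{-1/2}\}$, so it has Lebesgue measure at most $(n+1)B^{-1/2}$; since the integrand has modulus $1$, the two inner integrals differ by $O(B^{-1/2})$. Combining the two estimates gives $g_B(\gamma)\to g(\gamma)$. For the majorant, the bound \eqref{eq:upper_bound_int} gives $|g_B(\gamma)|\ll\min\{1,|\gamma|^{-1/2}\}^{n+1}$ for all $B\ge1$, and the implied constant is independent of $B$ because the one-variable estimate $\bigl|\int_{B^{-1/2}}^{1}e(\gamma\varepsilon_ix_i^2)\,dx_i\bigr|\ll\min\{1,|\gamma|^{-1/2}\}$ borrowed from \cite[Proof of Theorem 4.1]{dav} is uniform in the endpoints. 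Since $n\ge 3$, the function $\gamma\mapsto\min\{1,|\gamma|^{-1/2}\}^{n+1}$ is integrable on $\R$ (it equals $1$ on $[-1,1]$ and $|\gamma|^{-(n+1)/2}$ with exponent $>1$ outside), so it serves as the required dominating function. Dominated convergence now yields $\int g_B\to\int g$, i.e. $\mathfrak{I}_{\underline{\varepsilon},t,B}\to\mathfrak{I}_{\underline{\varepsilon}}$.

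There is no substantial obstacle here; the only point worth a moment's care is the uniformity in $B$ of the majorant coming from \eqref{eq:upper_bound_int}, which reduces to the fact that the oscillatory-integral estimate for $\int_a^b e(\gamma x^2)\,dx$ is uniform over all subintervals $[a,b]\subseteq[0,1]$ — a standard consequence of the convergence of the Fresnel integral after the substitution $x\mapsto x/\sqrt{|\gamma|}$ together with the trivial bound $|b-a|\le 1$.
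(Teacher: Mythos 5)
Your proof is correct as a proof of the lemma's stated conclusion, and it is a genuinely different route from the paper's. The paper estimates $|\mathfrak{I}_{\underline{\varepsilon},t,B}-\mathfrak{I}_{\underline{\varepsilon}}|$ directly, splitting it into two pieces $I_1(B,t)$ (the error from replacing $e(-\gamma t/B)$ by $1$) and $I_2(B)$ (the error from extending $[B^{-1/2},1]^{n+1}$ to $[0,1]^{n+1}$), bounding $I_1\ll|t|B^{-1}$ by a range-splitting argument in $\gamma$ and $I_2\ll B^{-1/2}$ by inclusion--exclusion with the refined one-variable bound $\bigl|\int_0^{B^{-1/2}}e(\gamma\varepsilon_ix_i^2)\,dx_i\bigr|\ll\min\{B^{-1/2},|\gamma|^{-1/2}\}$. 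This yields the quantitative estimate $|\mathfrak{I}_{\underline{\varepsilon},t,B}-\mathfrak{I}_{\underline{\varepsilon}}|\ll_t B^{-1/2}$, i.e.\ \eqref{eq:sing_int}. You instead apply dominated convergence with the uniform majorant $\min\{1,|\gamma|^{-1/2}\}^{n+1}$ from \eqref{eq:upper_bound_int}; the pointwise convergence and the uniformity of the majorant are justified correctly.

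The trade-off you should be aware of: the DCT argument is cleaner and suffices for the lemma as literally stated, but it only gives qualitative convergence. The paper in fact needs the explicit rate \eqref{eq:sing_int} --- it is fed into \eqref{eq:int_sing} and then into the error term of Proposition \ref{prop:major}, where it has to be balanced against $B^{(n-1)/2-\delta}$. If you rely on DCT here, you would have to rework the downstream bookkeeping, or recover the $B^{-1/2}$ rate anyway (which in practice means running an argument equivalent to the paper's $I_1+I_2$ split, e.g.\ by splitting the $\gamma$-integral at $|\gamma|\asymp B^{1/2}$ and using your two ingredients quantitatively). So the proposal proves the lemma, but the paper's proof is doing strictly more work on purpose.
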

\begin{proof}We have
%To prove this, we need to calculate $|\mathfrak{I}_{\underline{\varepsilon},t}-\mathfrak{I}_{\underline{\varepsilon}}|$.
\begin{align*}
&|\mathfrak{I}_{\underline{\varepsilon},t,B}-\mathfrak{I}_{\underline{\varepsilon}}|\\
%&=\left| \int_{-\infty}^{+\infty}(e(-\gamma t/B)-1)d\gamma \int_{[B^{-1/2},1]^{n+1}}e(\gamma\sum_{i=0}^{n}\varepsilon_{i}x_{i}^2)d\underline{x}\right|+\left|\int_{-\infty}^{+\infty}d\gamma \int_{\left([B^{-1/2},1]^{n+1}\right)^c}e(\gamma\sum_{i=0}^{n}\varepsilon_{i}x_{i}^2)d\underline{x}\right|\\
&\leqslant \int_{-\infty}^{+\infty}\hspace{-2ex}\left|(e(-\gamma t/B)-1)\right|d\gamma \left|\int_{[B^{-1/2},1]^{n+1}}\hspace{-2ex}e(\gamma\sum_{i=0}^{n}\varepsilon_{i}x_{i}^2)d\underline{x}\right|+\int_{-\infty}^{+\infty}\hspace{-2ex}d\gamma\left| \int_{\left([B^{-1/2},1]^{n+1}\right)^c}\hspace{-2ex}e(\gamma\sum_{i=0}^{n}\varepsilon_{i}x_{i}^2)d\underline{x}\right|\\
&=I_{1}(B,t)+I_{2}(B),
\end{align*}
where $\left([B^{-1/2},1]^{n+1}\right)^c$ denotes the complement of $[B^{-1/2},1]^{n+1}$ in the hypercube $[0,1]^{n+1}$.

For $I_{1}(B,t)$, we obtain, since $|(e(-\gamma t/B)-1)|=2|\sin(\pi \gamma t B^{-1})|\leqslant \min\{2,2\pi|\gamma||t|B^{-1}\}$ and recalling \eqref{eq:upper_bound_int},
%\begin{align*}
%\left|\int_{[B^{-1/2},1]^{n+1}}e(\gamma\sum_{i=0}^{n}\varepsilon_{i}x_{i}^2)d\underline{x}\right|
%%=&\prod_{i=0}^{n}\left|\int_{B^{-1/2}}^{1}e(\gamma\varepsilon_{i}x_{i}^2) dx_{i} \right|\\
%\ll&\min\{1,\left|\gamma\right|^{-1/2}\}^{n+1},
%\end{align*}
%as before, we get
$$
I_{1}(B,t) \ll \int_{-\infty}^{+\infty}\min\{1,\pi|\gamma||t|B^{-1}\}\cdot \min\{1,|\gamma|^{-1/2}\}^{n+1}d\gamma.
$$
Splitting up the latter integral into three parts according to the appropriate range of $\gamma$, we get $I_{1}(B,t)\ll|t|B^{-1}$ for $B$ big enough.
%considering the minima: $|\gamma|\leqslant 1$, $1\leqslant|\gamma|\leqslant B/(\pi|t|)$ and $B/(\pi|t|)\leqslant |\gamma|$.
%\begin{align*}
%I_{1}(B,t)\ll& \int_{0}^{1} \pi|t|\gamma B^{-1} d\gamma+\int_{1}^{\frac{B}{\pi|t|}}\pi|t|\gamma B^{-1}\cdot\gamma^{(-n-1)/2}d\gamma + \int_{\frac{B}{\pi|t|}}^{+\infty}\gamma^{(-n-1)/2}d\gamma \\
%\ll& |t|B^{-1}+\left(|t|^{-1}B\right)^{(1-n)/2}\\
%\ll& |t|B^{-1}.
%\end{align*}

For $I_{2}(B)$, one has that
$$
\left|\int_{0}^1e(\gamma\varepsilon_ix_i^2)dx_i\right| \ll \min\{1,|\gamma|^{-1/2}\} \quad \text{ and }\quad 
\left|\int_{0}^{B^{-1/2}}e(\gamma \varepsilon_ix_i^2)dx_i\right|\ll \min\{B^{-1/2},|\gamma|^{-1/2}\}.
$$
Applying the exclusion-inclusion principle to $I_2(B)$ and observing the symmetric form of the integrand, we get
\begin{equation*}
I_2(B)\ll \sum_{i=1}^{n+1}\int_{-\infty}^{+\infty}\min\{B^{-1/2},|\gamma|^{-1/2}\}^i\cdot \min\{1,|\gamma|^{-1/2}\}^{n+1-i}d\gamma.
\end{equation*}
It follows that  $I_2(B)\ll B^{-1/2}$. Hence,
\begin{equation}\label{eq:sing_int}
|\mathfrak{I}_{\underline{\varepsilon},t,B}-\mathfrak{I}_{\underline{\varepsilon}}|\ll_t B^{-1/2}
\end{equation}
for $B$ big enough, completing the proof. 
%: $|\gamma|\leqslant 1$, $1\leqslant|\gamma|\leqslant B$ and $B\leqslant |\gamma|$. This 

%\begin{equation*}
%I_2(B)\ll \sum_{i=1}^{n+1}\left( \int_{0}^1 B^{-i/2}d\gamma + \int_{1}^B B^{-i/2}\gamma^{-(n+1-i)/2}d\gamma + \int_B^{\infty}\gamma^{-(n+1)/2}d\gamma\right).
%\end{equation*}
%For the second integral, we get
%$$
%\int_{1}^B B^{-i/2}\gamma^{-(n+1-i)/2}d\gamma\ll\begin{cases}B^{-i/2}&\text{ if }i<n-1,\\B^{(1-n)/2+\varepsilon} &\text{ for any }\varepsilon>0\text{ if }i=n-1,\\ B^{(1-n)/2}&\text{ for }i=n, n+1.\end{cases}
%$$
%Anyway, we can conclude that 
%$$
%I_2(B)\ll B^{-1/2}
%$$
%$$
%\left|\mathfrak{J}_{\underline{\varepsilon},t}-\mathfrak{J}_{\underline{\varepsilon}}\right|\ll |t|B^{-1}+B^{-1/2},\mbox{ and } \mathfrak{I}_{\underline{\varepsilon},t}\rightarrow \mathfrak{I}_{\underline{\varepsilon}}\text{ as }B \text{ goes to infinity.}
%$$
\end{proof}
Note that from Proposition \ref{prop:integraal} and \eqref{eq:sing_int}, one has
\begin{equation}\label{eq:int_sing}
\mathfrak{I}_{\underline{\varepsilon},t,B}(BP^{\Delta-2})=\mathfrak{I}_{\underline{\varepsilon}}+O\left(B^{-1/2}+B^{(1-n)/2}P^{(\Delta-2)(1-n)/2}\right).
\end{equation}
% for respectively the singular series and singular integral, we can get the following majoration. We know that
%\begin{multline*}
%\left|\mathfrak{S}_{\underline{y},\underline{a},t}(P^{\Delta})\mathfrak{I}_{\underline{\varepsilon},t}(BP^{\Delta-2})-\mathfrak{S}_{\underline{y},\underline{a},t}\mathfrak{I}_{\underline{\varepsilon},t}\right|  \leqslant  \left|\mathfrak{S}_{\underline{y},\underline{a},t}\cdot(\mathfrak{I}_{\underline{\varepsilon},t}(BP^{\Delta-2})-\mathfrak{I}_{\underline{\varepsilon},t})\right| + \\  \left|\mathfrak{I}_{\underline{\varepsilon},t}\cdot(\mathfrak{S}_{\underline{y},\underline{a},t}(P^{\Delta})-\mathfrak{S}_{\underline{y},\underline{a},t})\right| 
%   + \left|(\mathfrak{I}_{\underline{\varepsilon},t}(BP^{\Delta-2})-\mathfrak{I}_{\underline{\varepsilon},t})\cdot(\mathfrak{S}_{\underline{y},\underline{a},t}(P^{\Delta})-\mathfrak{S}_{\underline{y},\underline{a},t})\right|.
%\end{multline*}
%Since the error terms in the expressions for both the singular series and the singular integral have negative powers of $B$ (or $P$), we know that the product is even smaller as a function of $B$, so we can leave it out the last expression; hence
We now return to the integral of $E_{\underline{y}}(\alpha)$ over the major arcs. 
%\begin{multline*}
%\mathfrak{S}_{\underline{y},\underline{a},t}(P^{\Delta})\mathfrak{I}_{\underline{\varepsilon},t}(BP^{\Delta-2}) = \mathfrak{S}_{\underline{y},\underline{a},t}\mathfrak{I}_{\underline{\varepsilon},t}+\\
%O\left(\frac{\prod_{i=0}^{n}|a_{i}y_{i}^3|^{1/2}}{\lcm(y_{0},\ldots,y_{n})^{3/2}}\cdot P^{\Delta(-n+4)/2}+  B^{(1-n)/2}P^{(\Delta-2)(1-n)/2}\right)\ .
%\end{multline*}
%We conclude 
\begin{prop}\label{prop:major}For $n\geqslant 4$ and for any $\Delta$ with $0<\Delta< 1/5$, there exists a $\delta>0$ so that
\begin{equation}{\label{equ:vgl}}
\int_{\mathfrak{M}(\Delta)}E_{\underline{y}}(\alpha)d\alpha=\frac{2^{n+1}\mathfrak{S}_{\underline{y},\underline{a},t}\mathfrak{I}_{\underline{\varepsilon}}}{\prod_{i=0}^{n}|a_{i}y_{i}^3|^{1/2}}\cdot B^{(n-1)/2}+O_{\underline{y},\underline{a}}\left(B^{(n-1)/2-\delta}\right).
\end{equation}
\end{prop}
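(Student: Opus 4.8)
The plan is to assemble the estimate \eqref{equ:vgl} by combining the three approximations already established in this subsection. Starting from \eqref{eq:overy}, I would first replace $\mathfrak{S}_{\underline{y},\underline{a},t}(P^{\Delta})$ by the full singular series $\mathfrak{S}_{\underline{y},\underline{a},t}$ using \eqref{eq:singularseries}, and then replace $\mathfrak{I}_{\underline{\varepsilon},t,B}(BP^{\Delta-2})$ by $\mathfrak{I}_{\underline{\varepsilon}}$ using \eqref{eq:int_sing}. Since $\underline{y}$ and $\underline{a}$ are fixed here, the arithmetic factors $\prod_{i}|a_iy_i^3|^{1/2+\varepsilon}$, $\lcm(a_0y_0^3,\ldots,a_ny_n^3)$, and $\sum_i |a_iy_i^3|^{1/2}$ are all $O_{\underline{y},\underline{a}}(1)$, so the error terms collapse to powers of $B$ and $P$ with implied constants depending only on $\underline{y}$ and $\underline{a}$.

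Concretely, I would write $P$ as a fixed power of $B$, say $P = B^{\kappa}$ for a suitable $\kappa>0$ to be chosen at the end (the excerpt defers the precise dependence of $P$ on $B$ to this proposition and Theorem~\ref{theorem_major}). Substituting into \eqref{eq:overy}, the main term is
$$
\frac{2^{n+1}\mathfrak{S}_{\underline{y},\underline{a},t}\,\mathfrak{I}_{\underline{\varepsilon}}}{\prod_{i=0}^{n}|a_iy_i^3|^{1/2}}\cdot B^{(n-1)/2},
$$
and the accumulated error is a sum of terms of the shape $B^{(n-1)/2}\cdot(\text{error from }\mathfrak{S})$, $B^{(n-1)/2}\cdot(\text{error from }\mathfrak{I})$, and the genuine error $B^{(n+2)/2}P^{5\Delta-4}$ appearing in \eqref{eq:overy}. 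Using \eqref{eq:singularseries} the first contributes $O_{\underline{y},\underline{a}}(B^{(n-1)/2}P^{\Delta(-n+3)/2})=O_{\underline{y},\underline{a}}(B^{(n-1)/2-\kappa\Delta(n-3)/2})$, which for $n\geqslant 4$ is a genuine saving. Using \eqref{eq:int_sing} the second contributes $O_{\underline{y},\underline{a}}(B^{(n-1)/2}(B^{-1/2}+B^{(1-n)/2}P^{(\Delta-2)(1-n)/2}))$, which again carries a power saving. The third term, $B^{(n+2)/2}P^{5\Delta-4}=B^{(n+2)/2+\kappa(5\Delta-4)}$, must be shown to be $O(B^{(n-1)/2-\delta})$; this requires $\kappa(4-5\Delta) > 3/2 + \delta$, which is achievable precisely because $\Delta < 1/5$ forces $4-5\Delta > 3$, so one can take $\kappa$ large enough (e.g. $\kappa = 1$ already gives $4-5\Delta>3>3/2$) to make this term acceptable while keeping $\kappa\Delta>0$ so the singular-series error also saves.

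The main obstacle, and the reason the hypothesis $0<\Delta<1/5$ is imposed, is the balancing of the error term $B^{(n+2)/2}P^{5\Delta-4}$ from \eqref{eq:overy} against the main term $B^{(n-1)/2}$: one needs $P$ large enough (relative to $B$) that $P^{5\Delta-4}$ is small, but the exponent $5\Delta-4$ must be negative with enough room to beat the extra factor $B^{3/2}$ — hence the constraint $\Delta<1/5$ (so that $5\Delta-4<-3$) together with the freedom to fix $P=B^{\kappa}$ for a suitably large absolute $\kappa$. Once $\kappa$ and $\Delta$ are fixed, every error term is of the form $B^{(n-1)/2-\delta_j}$ for some explicit $\delta_j>0$ depending only on $n$, $\Delta$, $\kappa$; taking $\delta=\min_j \delta_j$ completes the proof. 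I should also record that $\mathfrak{I}_{\underline{\varepsilon}}$ is the convergent singular integral defined in \eqref{eq:singular_integral} and $\mathfrak{S}_{\underline{y},\underline{a},t}$ the absolutely convergent singular series from \eqref{singular_series}, both independent of $B$, so that the displayed leading term genuinely has the claimed shape $C_{\underline{y},\underline{a},t}\cdot B^{(n-1)/2}$.
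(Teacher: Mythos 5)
Your overall strategy is the same as the paper's: substitute \eqref{eq:singularseries} and \eqref{eq:int_sing} into \eqref{eq:overy}, absorb the $\underline{y},\underline{a}$-dependent arithmetic factors into the implied constant, and then pick $P=P(B)$ so that every error term saves a power of $B$. However, there is a genuine slip in the way you dispose of the third error term.

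You nowhere invoke the standing constraint $BP^{\Delta-2}\geqslant 1$, which is a hypothesis of both Proposition~\ref{prop:E_y} and Proposition~\ref{prop:integraal} and hence is needed for \eqref{eq:overy} and \eqref{eq:int_sing} to be available at all. Writing $P=B^{\kappa}$, this constraint forces $1+\kappa(\Delta-2)\geqslant 0$, i.e. $\kappa\leqslant 1/(2-\Delta)<1$ for $\Delta<1$. Your claim that ``one can take $\kappa$ large enough (e.g.\ $\kappa=1$)'' is therefore not available: $\kappa=1$ gives $BP^{\Delta-2}=B^{\Delta-1}<1$ for $B>1$ and $\Delta<1$. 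The freedom runs the other way — $\kappa$ is bounded above, so you cannot crush the bad term by inflating $P$; you must check that an admissible $\kappa$ still wins. The paper's choice is $P=B^{1/2}$. Then $BP^{\Delta-2}=B^{\Delta/2}\geqslant 1$ is automatic, and the third error term becomes $B^{(n+2)/2+(5\Delta-4)/2}=B^{(n-2)/2+5\Delta/2}$, which is $O\bigl(B^{(n-1)/2-\delta}\bigr)$ for some $\delta>0$ precisely when $5\Delta/2<1/2$, i.e.\ $\Delta<1/5$. This is where the hypothesis $\Delta<1/5$ actually enters; it is not (as your write-up suggests) that $\Delta<1/5$ merely makes $4-5\Delta>3$ and you then compensate by taking $\kappa$ large. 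With $P=B^{1/2}$ fixed, the first error term saves $B^{-\Delta(n-3)/4}$ (using $n\geqslant 4$), and the second saves $B^{-1/2}$ and $B^{-(2-\Delta)(n-1)/4}$ respectively (all positive savings for $\Delta>0$), so one may take $\delta$ to be the minimum of the exponents gained, completing the argument as you intended.
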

\begin{proof}
Substituting \eqref{eq:singularseries} and \eqref{eq:int_sing} into formula \eqref{eq:overy} we obtained for $\int_{\mathfrak{M}(\Delta)}E_{\underline{y}}(\alpha)d\alpha$, we get 
\begin{multline}\label{major_voorlopig}
\int_{\mathfrak{M}(\Delta)}E_{\underline{y}}(\alpha)d\alpha=\frac{2^{n+1}\mathfrak{S}_{\underline{y},\underline{a},t}\mathfrak{I}_{\underline{\varepsilon}}}{\prod_{i=0}^{n}|a_{i}y_{i}^3|^{1/2}}\cdot B^{(n-1)/2}+O\Biggl(\frac{\prod_{i=0}^n |a_iy_i^3|^{\varepsilon}}{\lcm(a_0y_0^3,\ldots,a_ny_n^3)^{2}}\cdot B^{(n-1)/2}P^{\Delta(-n+3)/2}+\\  \frac{B^{(n-2)/2}+P^{(\Delta-2)(1-n)/2}}{\prod_{i=0}^n|a_{i}y_{i}^3|^{1/2}}+ \frac{\sum_{i=0}^{n}|a_{i}y_{i}^3|^{1/2}}{\prod_{i=0}^n|a_{i}y_{i}^3|^{1/2}}\cdot B^{(n+2)/2}P^{5\Delta-4}\Biggr). 
\end{multline}

For this expression to be nontrivial, we have to determine $P=P(B)$ and $\Delta$ properly (under the condition $BP^{\Delta-2}\geqslant 1$) that the error term is $O_{\underline{y},\underline{a}} (B^{(n-1)/2-\delta})$ for some $\delta>0$. Taking $P=B^{1/2}$ and $0<\Delta<1/5$ is satisfactory.  
\end{proof}
We can now prove our estimate for the major arcs. 
\begin{theo} \label{theorem_major} For $n\geqslant 4$ and for any $\Delta$ with $0<\Delta<1/15$, there exists a $\delta>0$ so that
$$
\int_{\mathfrak{M}(\Delta)}E(\alpha)d\alpha=C_{\underline{a},t}\cdot B^{(n-1)/2}+O\left(B^{(n-1)/2-\delta}\right)
$$ 
with
$$
C_{\underline{a},t}=2^{n+1}\sum_{\underline{y}\in \Z_0^{n+1}}\left(\prod_{i=0}^{n}\mu^2(|y_{i}|)\right)\frac{\mathfrak{S}_{\underline{y},\underline{a},t}\mathfrak{I}_{\underline{\varepsilon}}}{\prod_{i=0}^{n}|a_{i}y_{i}^3|^{1/2}}
$$
with $\mathfrak{S}_{\underline{y},\underline{a},t}$ and $\mathfrak{I}_{\underline{\varepsilon}}$ as defined above. 
\end{theo}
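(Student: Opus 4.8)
The plan is to deduce the statement from the single-$\underline{y}$ formula \eqref{major_voorlopig}, summed over all relevant $\underline{y}$; all implied constants below are allowed to depend on the fixed data $\underline{a}$, $t$. The point to watch is that the number of $\underline{y}$ contributing is of size $\asymp B^{(n+1)/3}$, so one must keep, rather than discard, the dependence of the error term of \eqref{major_voorlopig} on $\underline{y}$. First I would expand $S_i(\alpha)=\sum_{y_i}\mu^2(|y_i|)\sum_{x_i}e(\alpha a_ix_i^2y_i^3)$ and multiply out: for $x_i\in\Z_0$ the condition $1\leqslant|a_ix_i^2y_i^3|\leqslant B$ reads $1\leqslant|x_i|\leqslant B_{a_i,y_i}$ (the lower bound being automatic since $|a_iy_i^3|\geqslant1$), and this range is nonempty precisely when $|a_iy_i^3|\leqslant B$, so that
\begin{equation*}
E(\alpha)=\sum_{\substack{\underline{y}\in\Z_0^{n+1}\\ |a_iy_i^3|\leqslant B\ (0\leqslant i\leqslant n)}}\Bigl(\prod_{i=0}^n\mu^2(|y_i|)\Bigr)E_{\underline{y}}(\alpha)
\end{equation*}
is a finite sum. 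Hence $\int_{\mathfrak{M}(\Delta)}E(\alpha)\,d\alpha$ is the corresponding finite sum of the integrals evaluated in \eqref{major_voorlopig}; I would take $P=B^{1/2}$, so that $BP^{\Delta-2}=B^{\Delta/2}\geqslant1$ and, as $0<\Delta<1/15<1/5$, Proposition~\ref{prop:major} applies. This writes the major-arc integral as $2^{n+1}B^{(n-1)/2}\,\Sigma(B)$ plus the sum of the three error terms of \eqref{major_voorlopig}, where $\Sigma(B)=\sum_{|a_iy_i^3|\leqslant B}(\prod_i\mu^2(|y_i|))\,\mathfrak{S}_{\underline{y},\underline{a},t}\,\mathfrak{I}_{\underline{\varepsilon}}/\prod_i|a_iy_i^3|^{1/2}$.

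The first task is to complete $\Sigma(B)$ to the full series $C_{\underline{a},t}/2^{n+1}$. Here I would invoke the bound $\mathfrak{S}_{\underline{y},\underline{a},t}\ll\prod_i|a_iy_i^3|^{1/2+\varepsilon}/\lcm(a_0y_0^3,\dots,a_ny_n^3)^{3/2}$ established in the proof of the Corollary of \S\ref{subsection:major}, together with $|\mathfrak{I}_{\underline{\varepsilon}}|\ll1$ and the elementary inequality $\lcm(a_0y_0^3,\dots,a_ny_n^3)\geqslant\lcm(y_0,\dots,y_n)^3$ for squarefree $y_i$: the $\underline{y}$-th summand is then $\ll_\varepsilon m^{-9/2+\varepsilon}$ with $m=\lcm(|y_0|,\dots,|y_n|)$. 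Grouping the $\underline{y}$ by the value of $m$ and using that the number of squarefree $(n+1)$-tuples with a prescribed lcm $m$ is $\ll_\varepsilon m^{\varepsilon}$, one gets $\sum_{\underline{y}\in\Z_0^{n+1}}|\cdots|\ll_\varepsilon\sum_m m^{-9/2+\varepsilon}<\infty$, so $C_{\underline{a},t}$ is well defined (this is where $n\geqslant4$ enters, via the exponent $(n-1)/2>1$ in Lemma~\ref{lem:singular_series}). Since $\max_i|a_iy_i^3|>B$ forces $m\gg B^{1/3}$, the tail missing from $\Sigma(B)$ is $\ll_\varepsilon B^{-7/6+\varepsilon}$, whence $2^{n+1}B^{(n-1)/2}\Sigma(B)=C_{\underline{a},t}B^{(n-1)/2}+O(B^{(n-1)/2-7/6+\varepsilon})$.

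The second task is to sum the three error terms of \eqref{major_voorlopig}, with $P=B^{1/2}$, over the admissible $\underline{y}$. In the first two, the $\underline{y}$-weights $\prod_i|a_iy_i^3|^{\varepsilon}/\lcm(\cdots)^2$ and $\prod_i|a_iy_i^3|^{\varepsilon}/\lcm(\cdots)^{3/2}$ are summable over all of $\Z_0^{n+1}$ by the same lcm-grouping, and the attached powers of $B$ are $B^{(n-1)/2}P^{\Delta(3-n)/2}=B^{(n-1)/2-\Delta(n-3)/4}$ and $B^{(n-2)/2}+P^{(\Delta-2)(1-n)/2}=B^{(n-2)/2}+B^{(n-1)/2-\Delta(n-1)/4}$, each of which is $O(B^{(n-1)/2-\delta_0})$ for some $\delta_0>0$ since $n\geqslant4$ and $\Delta>0$. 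In the third term the weight $\sum_i|a_iy_i^3|^{1/2}/\prod_j|a_jy_j^3|^{1/2}=\sum_i\prod_{j\neq i}|a_jy_j^3|^{-1/2}$ sums over the admissible $\underline{y}$ to $\ll B^{1/3}$ (there are $\ll B^{1/3}$ admissible values of each $y_i$ and $\sum_y|y|^{-3/2}<\infty$), and multiplying by $B^{(n+2)/2}P^{5\Delta-4}=B^{(n-2)/2+5\Delta/2}$ gives $\ll B^{(n-1)/2-\delta_1}$ for some $\delta_1>0$ precisely when $\Delta<1/15$. Taking $\delta>0$ smaller than $7/6$, $\delta_0$ and $\delta_1$ then yields the claimed formula, with $C_{\underline{a},t}$ as in the statement.

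The main obstacle is exactly the balance in the last two steps: one has to confirm that the singular-series bound derived from Lemma~\ref{lem:singular_series} decays fast enough in $\underline{y}$ — quantitatively, that the $\lcm^{3/2}$ in the denominator, combined with the $\ll m^{\varepsilon}$ count of $(n+1)$-tuples of prescribed lcm, defeats the roughly $B^{(n+1)/3}$ terms of the sum — and that, after fixing $P=B^{1/2}$, the unavoidable $B^{1/3}$ loss incurred by summing over the range $|a_iy_i^3|\leqslant B$ still leaves a positive power saving against the minor-arc approximation term $B^{(n+2)/2}P^{5\Delta-4}$; it is this last requirement that forces $\Delta<1/15$.
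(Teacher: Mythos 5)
Your proposal is correct and follows essentially the same route as the paper: sum \eqref{major_voorlopig} over all admissible $\underline{y}$ with $P=B^{1/2}$, complete the main-term sum to the full series $C_{\underline{a},t}$ by bounding the tail via the singular-series estimate combined with an $\lcm$/divisor-count argument, and then check that summing each error term of \eqref{major_voorlopig} over $\underline{y}$ still saves a positive power of $B$, with the final term (after the $\ll B^{1/3}$ loss from $\sum_i\prod_{j\neq i}|a_jy_j^3|^{-1/2}$) forcing $\Delta<1/15$. One small slip: the $\underline{y}$-weight of the second error block of \eqref{major_voorlopig} is simply $\prod_i|a_iy_i^3|^{-1/2}$ rather than $\prod_i|a_iy_i^3|^{\varepsilon}/\lcm(\cdots)^{3/2}$ as you wrote, but since that weight is directly summable over $\Z_0^{n+1}$ (each factor gives $\sum_y|y|^{-3/2}<\infty$), the conclusion is unaffected.
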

\begin{proof}
We sum \eqref{major_voorlopig} over all squarefree $y_i$ such that $1\leqslant |a_iy_i^3|\leqslant B$, $i\in \{0,\ldots,n\}$ and denote the sum of the coefficient of the main term by $C_{\underline{a},t}(B)$. 

We obtain, using lemma \ref{lem:singular_series},
\begin{equation}\label{eq:upper}
\frac{\mathfrak{S}_{\underline{y},\underline{a},t}}{{\prod_{i=0}^{n}|a_iy_i^3|^{1/2}}}\ll \frac{\prod_{i=0}^n|a_iy_i^3|^{\varepsilon}}{\lcm(a_0y_0^3,\ldots,a_ny_n^3)^2},
\end{equation}
for any $\varepsilon>0$.

We have
\begin{align}\label{tau-functie}
 \sum_{\min\limits_{0\leqslant i \leqslant n}|a_iy_i^3|\geqslant B}\frac{\prod_{i=0}^n|a_iy_i^3|^{\varepsilon}}{\lcm(a_0y_0^3,\ldots,a_ny_n^3)^2}&\ll B^{\varepsilon}\sum_{n\geqslant B}\frac{\#\{(y_0,\ldots,y_n): \lcm(a_0y_0^3,\ldots,a_0y_n^3)=n\}}{n^2}\nonumber\\ 
&\ll B^{\varepsilon}\sum_{n\geqslant B}\frac{\tau(n)^{n+1}}{n^2}\ll B^{-1+\varepsilon},
\end{align}
for any $\varepsilon>0$. This allows us to replace $C_{\underline{a},t}(B)$ by $C_{\underline{a},t}$.

We now turn to the error term in \eqref{major_voorlopig} summing over all admitted values of $\underline{y}$ and putting $P=B^{1/2}$ as before.

The first error term can be treated as the main term. The coefficient of the third and fourth error term will also converge without any extra condition. Moreover, the upper bound can be made independent of the $a_i$. For the last error term however, the coefficient will asymptotically contribute $O(B^{1/3})$.

This means the extra condition 
$$
\frac{1}{3}+\frac{n+2}{2}+\frac{5\Delta-4}{2}<\frac{n-1}{2} \Leftrightarrow \Delta<\frac{1}{15}
$$
has to be satisfied for the error term to behave properly. This proves the statement.
\end{proof}
Note that \eqref{eq:upper} and \eqref{tau-functie} also provides a uniform upper bound of $C_{\underline{a},t}$, i.e., $C_{\underline{a},t}\leqslant C$, independently of $\underline{a}$ and $t$.

\subsection{Minor arcs}
The goal of this section is to prove the following theorem.
\begin{theo}{\label{theo:minor}} For $n\geqslant 4$, there exists a $\delta>0$ so that 
$$
\int_{\mathfrak{m}(\Delta)}E(\alpha)d\alpha=O\left(B^{(n-1)/2-\delta}\right). 
$$
\end{theo}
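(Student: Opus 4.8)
The plan is the classical two-part minor-arc argument: extract a Weyl-type power saving from one of the factors $S_i(\alpha)$, and bound the product of the remaining ones by a moment estimate over all of $T$. Since $|E(\alpha)|\le\prod_{i=0}^n|S_i(\alpha)|$, it suffices to prove $\int_{\mathfrak m(\Delta)}\prod_{i=0}^n|S_i(\alpha)|\,d\alpha\ll B^{(n-1)/2-\delta}$ for some $\delta>0$, and I will deduce this from
\[
\int_{\mathfrak m(\Delta)}\prod_{i=0}^n|S_i(\alpha)|\,d\alpha\;\le\;\Bigl(\sup_{\alpha\in\mathfrak m(\Delta)}|S_0(\alpha)|\Bigr)\int_T\prod_{i=1}^n|S_i(\alpha)|\,d\alpha,
\]
together with a pointwise bound $\sup_{\alpha\in\mathfrak m(\Delta)}|S_i(\alpha)|\ll_\varepsilon B^{1/2-\sigma+\varepsilon}$, valid for every $i$ with some $\sigma=\sigma(\Delta)>0$, and a mean value $\int_T\prod_{i=1}^n|S_i(\alpha)|\,d\alpha\ll_\varepsilon B^{n/2-1+\varepsilon}$ (for $n\ge 4$); their product is $B^{(n-1)/2-\sigma+2\varepsilon}$.

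For the pointwise bound, fix $i$ and $\alpha\in\mathfrak m(\Delta)$. Dirichlet's theorem furnishes $a/q$ with $\gcd(a,q)=1$, $q\le P^{2-\Delta}$ and $|\alpha-a/q|\le(qP^{2-\Delta})^{-1}$, and the definition of $\mathfrak m(\Delta)$ forces $q>P^{\Delta}$. Writing $S_i(\alpha)$ as a bounded combination of sums of the shape $\sum_{y}\mu^2(|y|)\sum_{x}e(\pm\alpha a_iy^3x^2)$ over the admissible box (and discarding the empty range $|x|<1$), I treat, for each squarefree $y$, the inner sum over $x$ as a quadratic Weyl sum of length $B_{a_i,y}=B^{1/2}/|a_iy^3|^{1/2}$ whose argument $\alpha a_iy^3$ has a rational approximation with denominator $q_y=q/\gcd(q,a_iy^3)$. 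For the $y$ in a suitable bulk range — $|y|$ not too large and $\gcd(q,a_iy^3)$ not too large — the denominator $q_y$ lies between a fixed positive power of $B$ and $B_{a_i,y}^2$, so the standard estimate for $\sum_{x\le X}e(\beta x^2)$ saves a fixed power $B^{\sigma'}$; the remaining $y$, namely those with $|y|$ large (whose total contribution is already $\ll B^{1/2-\sigma''}$ just from $\sum_y B_{a_i,y}\ll B^{1/2}$) together with the $O_\varepsilon(B^\varepsilon)$ many $y$ for which $\gcd(q,a_iy^3)$ is large, are estimated trivially. Summing over $y$ gives $\sup_{\alpha\in\mathfrak m(\Delta)}|S_i(\alpha)|\ll_\varepsilon B^{1/2-\sigma+\varepsilon}$, and one may take $\sigma$ a fixed multiple of $\Delta$.

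For the mean value I will use, besides the elementary bound $\int_T|S_i(\alpha)|^2\,d\alpha=\#\{a_ix_1^2y_1^3=a_ix_2^2y_2^3:\text{in range}\}\ll B^{1/2}$, which is immediate from the uniqueness (up to sign) of the representation $m=x^2y^3$ with $y$ squarefree, the key fourth-moment-type estimate
\[
\int_T|S_i(\alpha)|^2|S_j(\alpha)|^2\,d\alpha\;\ll_\varepsilon\;B^{1+\varepsilon}.
\]
Expanded, this counts the solutions of $a_i(x_1^2y_1^3-x_2^2y_2^3)+a_j(x_3^2y_3^3-x_4^2y_4^3)=0$ in the relevant ranges; for each fixed quadruple of squarefree $y_k$ it is the number of integral points on a diagonal quaternary quadric $\sum_k c_kx_k^2=0$ inside a box whose sides $R_k=(B/|c_k|)^{1/2}$ satisfy $|c_k|R_k^2=B$ for every $k$, so that the box is "balanced". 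Classical estimates for the number of integral points on a quadratic form in four variables (via the Kloosterman-refined circle method, or via representation numbers of quadratic forms) then bound this count by $\ll_\varepsilon B^\varepsilon\,B\big/\prod_k|y_k|^{3/2}$, and the sum over the $y_k$ converges; the "diagonal" solutions, where each parenthesised difference vanishes separately, contribute only $\ll B$ by the uniqueness argument. Combining these estimates by Hölder and Cauchy--Schwarz — pairing the $n\ge 4$ factors $S_1,\dots,S_n$ into groups of two or three, the four-variable estimate being the only nontrivial input since groups of size $\ge 3$ produce quadrics in $\ge 6$ variables handled by the elementary circle method — gives $\int_T\prod_{i=1}^n|S_i(\alpha)|\,d\alpha\ll_\varepsilon B^{n/2-1+\varepsilon}$ for every $n\ge 4$. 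Feeding the two estimates into the displayed inequality yields $\int_{\mathfrak m(\Delta)}E(\alpha)\,d\alpha\ll B^{(n-1)/2-\sigma+2\varepsilon}$, so, on choosing $\varepsilon$ small enough, the theorem holds with $\delta=\sigma/2$, consistently with the range $0<\Delta<1/15$ inherited from the major arcs. I expect the main obstacle to be precisely the quaternary mean value $\int_T|S_i|^2|S_j|^2\ll_\varepsilon B^{1+\varepsilon}$: four is exactly the number of variables at which counting integral points on a quadric still gives the expected order of magnitude, so this is where the hypothesis $n\ge 4$ is genuinely used, and care is needed both to make the count uniform in the freely ranging coefficients $y_k$ (controlling the local densities of the quadric) and to verify that the sum over the $y_k$ converges rather than contributing an extra power of $B$. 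The Weyl estimate, by contrast, is routine; its only mildly delicate point is organising the sum over $y$ so that the exceptional $y$ — those producing a spuriously small denominator $q_y$, and hence no cancellation in the inner quadratic sum — are provably negligible.
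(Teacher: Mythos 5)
Your overall architecture matches the paper's: Hölder to split the minor-arc integral into a pointwise bound on one (or several) factors $S_i(\alpha)$ times a mean value of the rest, a Weyl-differencing argument (via Dirichlet and a separation-type lemma) for the pointwise estimate $\sup_{\mathfrak m(\Delta)}|S_i(\alpha)|\ll B^{1/2-\sigma}$, and a fourth-moment input $\int_0^1|S_j(\alpha)|^4\,d\alpha\ll_\varepsilon B^{1+\varepsilon}$. The Weyl-bound sketch is essentially the paper's proof of Proposition \ref{kleinste} (you truncate $|a_i|$, $|y|\le B^\psi$, use Dirichlet, and organise the $y$-sum so the bad $y$ are negligible), and the Hölder bookkeeping gives the same final exponent.

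The genuine gap is in your proposed proof of the fourth-moment estimate. You fix the quadruple $\underline y=(y_1,\dots,y_4)$, claim the count of integer points on the diagonal quadric $\sum_k c_kx_k^2=0$ (with $c_k=\pm a_\bullet y_k^3$) in the balanced box is $\ll_\varepsilon B^\varepsilon B/\prod_k|y_k|^{3/2}$, and sum over $\underline y$. That per-$\underline y$ bound is false. Take $y_1=\dots=y_4=y$: the quadric is $y^3(x_1^2-x_2^2+x_3^2-x_4^2)=0$, its integer zeros in the box $|x_k|\le R=(B/|a|y^3)^{1/2}$ number $\asymp R^2\log R=(B/y^3)\log B$ (even after discarding the diagonal $x_1=\pm x_2$, $x_3=\pm x_4$), whereas your claimed bound is $B^{1+\varepsilon}/y^6$ — off by a factor $\asymp y^3$. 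More importantly, even the correct per-$\underline y$ upper bound one can extract this way (fix $(x_1,x_2)$, get $m\ne0$, count $(x_3,x_4)$ with a divisor bound, yielding $\ll R_1R_2\,B^\varepsilon=B^{1+\varepsilon}/|y_1y_2|^{3/2}$) does not sum: the free range of $(y_3,y_4)$ contributes $\asymp B^{2/3}$, giving $\asymp B^{5/3}$, not $B^{1+\varepsilon}$. The point is that for most $(y_3,y_4)$ the binary form $y_3^3X^2-y_4^3Y^2$ does not represent $-m$ at all, and a per-$\underline y$ bound cannot see this. The paper's Lemma \ref{lemma} circumvents precisely this obstruction: after a dyadic decomposition in $y$, an \emph{extra} Cauchy–Schwarz inside the dyadic block collapses $y_3$ and $y_4$ into a single variable, producing the $7$-variable set $Z(Y,B)$ with $y_3^3(x_3^2-x_4^2)=x_1^2y_1^3-x_2^2y_2^3$, on which the divisor argument ($y_3^3\mid m$, then $(x_3-x_4)(x_3+x_4)=m/y_3^3$) closes cleanly with a bound $\ll B^{1+\varepsilon}/Y$, uniform in the dyadic scale. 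Without this step, "classical counts on quaternary quadrics plus summation over $\underline y$" does not recover $B^{1+\varepsilon}$; you would need to control the $\underline y$-dependence of the Kloosterman singular series and its interaction with the box far more carefully than indicated, which is a genuinely different (and not obviously lighter) piece of work.
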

To treat the integral over the minor arcs, we will not fix $\underline{y}$ but examine the whole equation at once.
Recall that 
$$
E(\alpha)=e(-\alpha t)\prod_{i=0}^{n}S_{i}(\alpha)=e(-\alpha t)\prod_{i=0}^{n} \sum_{1\leqslant|a_{i}x^2y^3|\leqslant B}\mu^2(|y|) e(\alpha a_{i}x^2y^3).
$$
Using H\"older's inequality repeatedly, we get for $n \geqslant 4$,  
\begin{equation}
\left|\int_{\mathfrak{m}(\Delta)}E(\alpha)d\alpha \right| \leqslant 
\sup_{\alpha \in \mathfrak{m}(\Delta)}(|S_{0}(\alpha)|\cdots |S_{n-4}(\alpha)|)\max_{j=n-3,\ldots,n}\int_{0}^{1}|S_{j}(\alpha)|^{4}d\alpha.\label{minoren}
\end{equation}
To obtain a good upper bound of this expression, we first examine $\int_{0}^{1}|S_{j}(\alpha)|^{4}d\alpha$.
\begin{lem}\label{lemma} For any $\varepsilon >0$, we have 
$$\int_{0}^{1}|S_{j}(\alpha)|^4d\alpha \ll_{\varepsilon} B^{1+\varepsilon}.$$ 
\end{lem}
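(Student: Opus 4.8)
The plan is to interpret $\int_0^1 |S_j(\alpha)|^4 \, d\alpha$ as the number of solutions to a counting problem and then bound that count. Recall that
$$
S_j(\alpha) = \sum_{1 \le |a_j x^2 y^3| \le B} \mu^2(|y|)\, e(\alpha a_j x^2 y^3).
$$
Expanding $|S_j(\alpha)|^4 = S_j(\alpha)^2 \overline{S_j(\alpha)}^2$ and integrating term by term over $[0,1]$, the orthogonality relation $\int_0^1 e(\alpha m)\, d\alpha = [m = 0]$ shows that $\int_0^1 |S_j(\alpha)|^4 \, d\alpha$ equals the number of eight-tuples $(x_1,y_1,\dots,x_4,y_4)$ with each $y_k$ squarefree, each $|a_j x_k^2 y_k^3| \le B$, and
$$
a_j x_1^2 y_1^3 + a_j x_2^2 y_2^3 = a_j x_3^2 y_3^3 + a_j x_4^2 y_4^3,
$$
equivalently $x_1^2 y_1^3 + x_2^2 y_2^3 = x_3^2 y_3^3 + x_4^2 y_4^3$. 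Writing $m_k = x_k^2 y_k^3$, this is the number of representations of the equation $m_1 + m_2 = m_3 + m_4$ by squareful numbers $m_k$ of absolute value at most $B$ (roughly; strictly, of absolute value at most $B/|a_j| \le B$, which only helps).

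First I would reduce to counting: let $r(m)$ denote the number of ways to write a given integer $m$ as $x^2 y^3$ with $y$ squarefree, $|x^2 y^3| \le B$; this is $O_\varepsilon(B^\varepsilon)$ by the divisor bound, since such a representation is essentially unique up to sign and bounded. Then $\int_0^1 |S_j(\alpha)|^4 \, d\alpha \le \sum_{|h| \le 2B} N(h)^2$, where $N(h)$ is the number of pairs $(m_1, m_2)$ of squareful numbers with $|m_i| \le B$ and $m_1 - m_2 = h$; here I used $m_1 - m_3 = m_4 - m_2 = h$. The diagonal contribution $h = 0$ gives $N(0)^2 \ll (B^{1/2 + \varepsilon})^2 = B^{1 + \varepsilon}$, since the number of squareful numbers up to $B$ is $O(B^{1/2})$ and each has $O_\varepsilon(B^\varepsilon)$ representations. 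For the off-diagonal terms, I would bound $\sum_{h \ne 0} N(h)^2$ by counting solutions of $x_1^2 y_1^3 - x_3^2 y_3^3 = x_4^2 y_4^3 - x_2^2 y_2^3$ directly: fix $x_1, y_1, x_3, y_3$ (that is $O(B^{1+\varepsilon})$ choices, since $x^2 \le B$ forces $x \ll B^{1/2}$ and then $y^3 \ll B$ forces $y \ll B^{1/3}$, giving $\ll B^{5/6+\varepsilon}$ pairs, and actually the constraint $|x^2 y^3| \le B$ cuts this to $\ll B^{1/2 + \varepsilon}$ after summing over $y$ for each $x$ — let me instead just say the number of squareful $m$ with $|m| \le B$ is $\ll B^{1/2}$, so there are $\ll B$ choices of the pair $(m_1, m_3)$), which determines $h = m_1 - m_3 \ne 0$; then the number of $(m_2, m_4)$ with $m_4 - m_2 = h$ is $N(h)$, but we need a bound summable against the number of $(m_1,m_3)$. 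Cleaner: $\sum_{h} N(h)^2$ counts quadruples $(m_1, m_2, m_3, m_4)$ of squareful integers, each $O(B)$ in size, with $m_1 - m_2 = m_3 - m_4$; fixing $m_1, m_2$ (at most $O(B)$ choices since there are $O(B^{1/2})$ squareful numbers up to $B$) fixes $h$, and the count of $(m_3, m_4)$ with difference $h$: when $h \ne 0$ this is at most the number of $m_4$ squareful with $m_4 + h$ also squareful, which is trivially $O(B^{1/2})$, giving total $O(B^{1/2} \cdot B^{1/2} \cdot B^{1/2}) = O(B^{3/2})$ — too weak.

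The main obstacle is exactly this off-diagonal estimate: a naive count gives $B^{3/2}$, not $B^{1+\varepsilon}$, so one must exploit that squareful numbers are genuinely sparse and that $x^2 y^3 = x'^2 y'^3 + h$ with $y, y'$ squarefree is a restrictive Diophantine condition. The right approach, I expect, is to parametrise more carefully: in the equation $x_1^2 y_1^3 + x_2^2 y_2^3 = x_3^2 y_3^3 + x_4^2 y_4^3$, fix the four squarefree parts $y_1, y_2, y_3, y_4$ (ranging over $O(B^{1/3})$ values each — in fact with $|x_k^2 y_k^3| \le B$ one has $|y_k| \le B^{1/3}$), reducing to a ternary-type quadratic equation $\sum \pm y_k^3 x_k^2 = 0$ in the $x_k$; for fixed nonzero coefficients this is a quadric whose integer points in a box of side $\ll B^{1/2}/|y_k|^{3/2}$ number $O_\varepsilon(B^{1+\varepsilon} / (|y_1 y_2 y_3 y_4|^{3/2}) \cdot (\text{something}))$ by a count for quaternary quadratic forms (e.g. via the circle method or a direct divisor-type argument on $y_1^3 x_1^2 - y_3^3 x_3^2 = y_4^3 x_4^2 - y_2^3 x_2^2$, factoring each side). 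Summing the resulting bound over $y_1, y_2, y_3, y_4$ with the $|y_k|^{-3/2}$ weights converges, and the total comes out as $O_\varepsilon(B^{1+\varepsilon})$. I would carry out the steps in the order: (1) orthogonality to convert the integral to a solution count; (2) observe $|y_k| \ll B^{1/3}$ and fix the $y_k$; (3) for fixed $y_k$, bound solutions of the quaternary quadratic $y_1^3 x_1^2 + y_2^3 x_2^2 = y_3^3 x_3^2 + y_4^3 x_4^2$ in the relevant box by a standard divisor/hyperbola argument, separating the diagonal $\{x_1^2 y_1^3, x_2^2 y_2^3\} = \{x_3^2 y_3^3, x_4^2 y_4^3\}$ (contributing $B^{1+\varepsilon}$) from the rest; (4) sum over $y_k$. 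The delicate point in step (3) is handling the case where some $y_k$ coincide or the quadratic degenerates, and making the $B^\varepsilon$ uniform; the divisor bound $\tau$ and the sparsity $|y_k| \le B^{1/3}$ are what make it work.
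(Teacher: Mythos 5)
Your step (1) and the reduction to counting solutions of $x_1^2 y_1^3 + x_2^2 y_2^3 = x_3^2 y_3^3 + x_4^2 y_4^3$ agree with the paper, and you correctly note both that the naive diagonal/off-diagonal split only gives $B^{3/2}$ and that $|y_k|\le B^{1/3}$. The gap is in step (3): the forms $y_1^3 x_1^2 - y_3^3 x_3^2$ and $y_4^3 x_4^2 - y_2^3 x_2^2$ do \emph{not} factor over $\Z$ unless $y_1 y_3$ (resp.\ $y_2 y_4$) is a perfect square, i.e.\ (by squarefreeness) unless $|y_1|=|y_3|$. For generic $\underline{y}$ there is no difference-of-two-squares structure, so the ``hyperbola/divisor argument, factoring each side'' has nothing to grab onto. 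What one would actually need is a bound on the number of solutions of $y_3^3 x_3^2 \pm y_4^3 x_4^2 = m$ in the relevant box, uniform in the (indefinite, non-square-discriminant) coefficients and summable over $y_3,y_4\le B^{1/3}$; equivalently something like $R_2(m)\ll B^\varepsilon$ for all $m\ne 0$, where $R_2(m)$ counts representations of $m$ as a signed sum of two squareful numbers of size $\le B$. That is not a consequence of the divisor bound and is not what the paper does.

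The paper's proof introduces precisely the missing device: it splits $S_j$ into dyadic blocks $S_Y$ (with $Y<y\le 2Y$) and, \emph{before} opening up the fourth power, applies Cauchy--Schwarz inside a block, $|S_Y(\alpha)|^2 \le Y\sum_{Y<y\le 2Y}\mu^2(y)\bigl|\sum_x e(\alpha a_j x^2 y^3)\bigr|^2$. The effect is to duplicate one $y$-variable, so that $\int_0^1|S_Y|^4\,d\alpha$ is bounded by $Y$ times the number of $7$-tuples $(x_1,\ldots,x_4,y_1,y_2,y_3)$ satisfying $y_3^3(x_3^2-x_4^2)=x_1^2 y_1^3-x_2^2 y_2^3$. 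Now $x_3$ and $x_4$ share the same cube $y_3^3$, and the left side genuinely factors as $y_3^3(x_3-x_4)(x_3+x_4)$. In the off-diagonal case one chooses $(x_1,y_1,x_2,y_2)$ ($\ll B/Y$ ways), sets $k$ equal to the right-hand side, and two divisor bounds (first $y_3^3\mid k$, then $(x_3-x_4)(x_3+x_4)=k/y_3^3$) give $\ll B^\varepsilon$ further choices; the diagonal case $k=0$ is handled by uniqueness of the representation of a squareful number as $x^2y^3$. Multiplying by the factor $Y$ from Cauchy--Schwarz and resumming the $O(\log B)$ dyadic blocks (another Cauchy application) yields $B^{1+\varepsilon}$. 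This Cauchy--Schwarz ``variable duplication'' is the key idea your sketch is missing.
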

\begin{proof}
From now on, we will concentrate on the part of the sum where the variables are positive. This will suffice to prove the theorem because of the symmetry.

Let 
$$
 S_{Y}(\alpha)= \sum_{Y<y\leqslant 2Y}\mu^2(y)\sum_{1\leqslant x\leqslant B_{a_{j},y}} e(\alpha a_{j}x^2y^3)
$$
be the contribution to $S_{j}(\alpha)$ for $Y<y \leqslant 2Y$ and squarefree. Using Cauchy inequality, it follows that 
\begin{align*}
\int_{0}^{1}|S_{Y}(\alpha)|^4 d\alpha & \ll Y \int_{0}^{1}|S_{Y}(\alpha)|^2 \sum_{Y<y\leqslant 2Y}\mu^2(y)\left|\sum_{1\leqslant x\leqslant B_{a_{j},y}} e(\alpha a_{j}x^2y^3)\right|^2 d\alpha \\
%&\leqslant & Y \int_{0}^{1} |S_{Y}(\alpha)|^2\sum_{Y<y\leqslant 2Y}\mu^2(y)\left|\sum_{x\leqslant B_{a_{j}y}} e(\alpha a_{j}x^2y^3)\right|^2 d\alpha \\
& \ll Y\sum_{Y<y_{1},y_{2},y_{3}\leqslant 2Y}\mathop{\mathop{\sum_{1\leqslant x_1\leqslant B_{a_j,y_1}}}_{1\leqslant x_2\leqslant B_{a_j,y_2}}}_{1\leqslant x_3,x_4\leqslant B_{a_j,y_3}}\int_{0}^{1}e(\alpha a_{j}G(\underline{x},\underline{y}))d\alpha \\
& \leqslant Y\cdot \#Z(Y,B),
\end{align*}
with $G(\underline{x},\underline{y})=y_{3}^3(x_{4}^2-x_{3}^{2})+x_{1}^2y_{1}^3-x_{2}^2y_{2}^3$ and $Z(Y,B)= \{ (\underline{x},\underline{y}) \in \Z_0^{7}:y_{3}^3(x_{3}^2-x_{4}^2)=x_{1}^2y_{1}^3-x_{2}^2y_{2}^3,\ 1\leqslant x_i< B_{Y},\ Y<y_{j}\leqslant2Y\}$, where $B_Y=(B/Y^3)^{1/2}$. 

%In order to estimate $\#Z(Y,B)$, let us first count the number of solutions for which the right-hand side of the defining equation is zero. Since in this case $x_1^2y_1^3=x_2^2y_2^3$, we can choose for example $x_1$ and $y_1$ arbitrarily; recalling $\#\{m\in \mathbf{N}:m|N\}\ll_{\varepsilon}N^{\varepsilon}$, we obtain $O((Y\cdot B_{Y})^{1+\varepsilon})$ possibilities for these variables. Since also $y_{3}^3(x_{3}^2-x_{4}^2)=0$ and $y_3$ is nonzero, we can choose $y_3$ arbitrarily and the number of possible values of $x_3$ and $x_4$ is $O(B_{Y}^{1+\varepsilon})$. Hence, there are $O(Y^{-1}\cdot B^{1+\varepsilon})$ solutions of this type. 
%
%If the right-hand side is nonzero (for which we now have $O(Y^2\cdot B_Y^2)$ possibilities), the left-hand side is just a product of divisors of the right-hand side, so in this case there are again essentially $O(Y^{-1}\cdot B^{1+\varepsilon})$ possibilities. 
If we make a distinction between solutions $(\underline{x},\underline{y})\in \Z_0^7$ of 
$G(\underline{x},\underline{y})=0$ for which $x_1^2y_1^3-x_{2}^2y_{2}^3=0$ or not, it follows that both sets contain $O(Y^{-1}\cdot B^{1+\varepsilon})$ solutions. Hence, we conclude that $\#Z(Y,B)\ll_{\varepsilon} Y^{-1}\cdot B^{1+\varepsilon}$ and thus, 
$$
\int_{0}^{1}|S_{Y}(\alpha)|^4 d\alpha \ll_{\varepsilon}B^{1+\varepsilon}.
$$  
Summing over all intervals $(Y,2Y]$ with $Y=2^k\ll B^{1/3}$ and applying Cauchy's inequality twice on $|S_j(\alpha)|^4=|\sum_{Y=2^k\ll B^{1/3}}S_Y(\alpha)|^4$, we get
$$
\int_{0}^1 |S_j(\alpha)|^4d\alpha \ll B^{3\varepsilon'}\sum_{Y=2^k\ll B^{1/3}} \int_{0}^{1}|S_{Y}(\alpha)|^4 d\alpha \ll B^{3\varepsilon'}\sum_{Y=2^k\ll B^{1/3}} B^{1+\varepsilon}=B^{1+\varepsilon''},
$$  
which completes the proof.
\end{proof}
\begin{remark}\label{speciaal_geval} Recalling the expression for $\#M_{\underline{a},t}(B)$ in \eqref{eq:M_{a,t}(B)} and putting $n=3$, $\underline{a}=(1,1,1,1)$ and $t=0$, this lemma implies that the equation $n_1+n_2=n_3+n_4$, where $n_i$ is squareful and $1\leqslant |n_{i}|\leqslant B$ for each $i \in \{1,2,3,4\}$, has $O(B^{1+\varepsilon})$ solutions. 
\end{remark}
In order to handle the first part of \eqref{minoren}, namely $\sup_{\alpha \in \mathfrak{m}(\Delta)}(|S_{0}(\alpha)|\cdots |S_{n-4}(\alpha)|)$, we will prove the following proposition.
\begin{prop}\label{kleinste} Let $\alpha \in \mathfrak{m}(\Delta)$. Then there exists a $\delta>0$ such that 
$$
\left|S_{i}(\alpha)\right| \ll B^{1/2-\delta}. 
$$
\end{prop}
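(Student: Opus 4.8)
The plan is to prove a Weyl-type estimate for the exponential sum
$$
S_i(\alpha)=\sum_{1\leqslant |a_ix^2y^3|\leqslant B}\mu^2(|y|)e(\alpha a_ix^2y^3)
$$
on the minor arcs. First I would reduce to the case of positive variables by symmetry, and split the $y$-range dyadically into intervals $(Y,2Y]$ with $Y=2^k\ll (B/|a_i|)^{1/3}$, writing $S_i(\alpha)=\sum_Y S_Y(\alpha)$ where $S_Y(\alpha)=\sum_{Y<y\leqslant 2Y}\mu^2(y)\sum_{1\leqslant x\leqslant B_{a_i,y}}e(\alpha a_i x^2 y^3)$. It suffices to bound each $S_Y(\alpha)$ by $B^{1/2-\delta}$ (up to a factor $B^\varepsilon$ absorbed by choosing $\delta$ slightly smaller), since there are only $O(\log B)$ dyadic blocks.

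For a fixed $Y$, the inner sum over $x$ is a quadratic Weyl sum $\sum_{1\leqslant x\leqslant B_{a_i,y}}e(\alpha a_i y^3 x^2)$ of length $B_{a_i,y}\asymp (B/(|a_i|Y^3))^{1/2}$. The next step is to apply the classical Weyl inequality for quadratic exponential sums: for $\beta$ with a rational approximation $|\beta-b/r|\leqslant r^{-2}$, one has $\sum_{x\leqslant N}e(\beta x^2)\ll N^{1+\varepsilon}(1/N+1/r+r/N^2)^{1/2}$. So whenever $\alpha a_i y^3$ is not too well approximated by a rational with small denominator, the inner sum saves a power of $B_{a_i,y}$ over the trivial bound, and summing over $y\in(Y,2Y]$ gives roughly $Y\cdot B_{a_i,y}^{1-\delta'}=Y\cdot (B/(|a_i|Y^3))^{(1-\delta')/2}$, which for the relevant range of $Y$ is $\ll B^{1/2-\delta}$. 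The genuinely delicate case is when $\alpha a_i y^3$ is close to a rational $b/r$ with $r$ small for \emph{many} $y$ in the dyadic block simultaneously; here one argues that such $y$ force $\alpha$ itself to lie near a rational with controlled denominator — using that if $\alpha a_i y_1^3$ and $\alpha a_i y_2^3$ are both well approximated by small-denominator rationals then $\alpha a_i (y_1^3-y_2^3)$ is too, pinning down $\alpha$ — and this contradicts $\alpha\in\mathfrak m(\Delta)$, i.e. the fact that $\alpha$ has no approximation $a/q$ with $q\leqslant P^\Delta=B^{\Delta/2}$ and $|\alpha-a/q|<P^{\Delta-2}$. A convenient way to organize this is a double large-sieve / Dirichlet-approximation argument on the pair $(x,y)$, or alternatively an appeal to a van der Corput / Weyl estimate for the two-variable sum $\sum_{x,y}e(\alpha a_i x^2 y^3)$ directly.

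I expect the main obstacle to be exactly this bookkeeping: translating the minor-arc condition on $\alpha$ (denominator $>B^{\Delta/2}$, or good approximation but with $|\alpha-a/q|$ large) into a usable statement about the joint Diophantine behaviour of $\{\alpha a_i y^3: Y<y\leqslant 2Y\}$, and doing so uniformly over all dyadic $Y$ and all the competing ranges (small $Y$, where $y$ is short but $x$ is long, versus large $Y\asymp B^{1/3}$, where the roles reverse). One has to be careful that the exponents in the Weyl savings, after being diluted by the sum over $y$ and by the $B^\varepsilon$ factors, still leave a genuine power saving $\delta>0$; this should work for all $n\geqslant 4$ (indeed the estimate here does not even need $n$, only the single sum $S_i$), but it requires choosing $\delta$ small in terms of $\Delta$ and verifying the inequality in each regime. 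Once Proposition~\ref{kleinste} is in hand, combining it with Lemma~\ref{lemma} in \eqref{minoren} gives $\int_{\mathfrak m(\Delta)}E(\alpha)\,d\alpha\ll B^{(n-4)(1/2-\delta)}\cdot B^{1+\varepsilon}$, and since $(n-4)(1/2-\delta)+1<(n-1)/2$ for small $\delta$ and $n\geqslant 4$, Theorem~\ref{theo:minor} follows.
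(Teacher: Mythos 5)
Your overall shape — restrict to positive variables, control the length of the inner sum in $x$, run a Weyl-type argument on it, and invoke the minor-arc condition on $\alpha$ — matches the paper's strategy, but the step you flag as ``the genuinely delicate case'' is a real gap, and it is precisely the point where the paper does something structurally different that you have not found.

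The paper does \emph{not} apply Weyl's inequality with a rational approximation of $\alpha a_iy^3$ for each $y$ separately. Instead, after a truncation $|a_i|\le B^{\psi}$, $y\le B^{\psi}$ (the complementary ranges being handled by the trivial bound $\sum_y\sqrt{B/(|a_i|y^3)}$), it performs squaring-and-differencing on $T_{y}(\alpha)=\sum_{x\le X}e(\alpha a_iy^3x^2)$ to obtain $|T_y(\alpha)|^2\ll\sum_{|h|\le X}\min\{X,\|2\alpha a_iy^3h\|^{-1}\}$, and then reindexes $m=2a_iy^3h$ so that the sum becomes (after absorbing a divisor-type factor $B^{\varepsilon}$) $\sum_{m\le Y}\min\{X,\|\alpha m\|^{-1}\}$ with $Y=2|a_i|y^3X$. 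At that point the separation lemma \cite[Lemma 2.2]{Vaughan} is applied with $P=Y$, $Q=X$ and a \emph{single} Dirichlet approximation $a/q$ of $\alpha$ (with $q\le B^{(2-\Delta)/4}$), and the minor-arc assumption forces $q>B^{\Delta/2}$. The outcome is the clean bound
$$
|S_i(\alpha)|\ll B^{1/2-2\psi}+B^{1/2+\varepsilon+\psi}\bigl(q^{-1/2}+B^{\psi-1/4}\bigr)+q^{1/2}B^{\varepsilon+\psi},
$$
and the window $B^{\Delta/2}<q\le B^{(2-\Delta)/4}$ together with a small $\psi$ gives the power saving. Crucially, the Diophantine approximation is made \emph{once} to $\alpha$, never to the dilates $\alpha a_iy^3$, so the ``many $y$ with good approximations to $\alpha a_iy^3$'' case you worry about simply does not arise. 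Your plan of pinning $\alpha$ down from simultaneous approximations of $\alpha a_iy_1^3$ and $\alpha a_iy_2^3$ is not fleshed out and would require an explicit argument (this is exactly the bookkeeping you rightly identify as the obstacle); as written it is a gap, not a proof.

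Two smaller remarks. First, the paper's truncation $y\le B^{\psi}$ is more convenient than a full dyadic decomposition up to $Y\asymp B^{1/3}$: it keeps the inner sum long, $X\ge B^{1/2-2\psi}$, which is what Weyl differencing needs, while the large-$y$ range is trivially $\ll B^{(1-\psi)/2}$; your dyadic blocks near $Y\asymp B^{1/3}$ have $X\asymp 1$ and there is no Weyl saving there, so you would have to close that regime by the trivial bound anyway. Second, in your last sentence the number of sup-normed factors in \eqref{minoren} is $n-3$ (indices $0,\dots,n-4$), not $n-4$; the conclusion is unaffected, but the exponent in that display should be $(n-3)(1/2-\delta)$.
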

\begin{proof}
Let $\psi>0$. We may henceforth assume that $|a_i| \leqslant B^{\psi}$, since otherwise the trivial upper bound yields
\begin{equation*}
\left|S_i(\alpha)\right| \leqslant \sum_{y=1}^{\infty} \sqrt{\frac{B}{a_iy^3}}\ll B^{(1-\psi)/2},
\end{equation*}
which is satisfactory. Similarly, we may assume that $y\leqslant B^{\psi}$ in $S_i(\alpha)$. Thus, we have 
\begin{equation*}
\left| S_i(\alpha)\right| \ll B^{(1-\psi)/2}+\sum_{y\leqslant B^{\psi}}\mu^2(y)\left|T_{\underline{y}}(\alpha)\right|,
\end{equation*}
with, if we set $X=\sqrt{B/(|a_i|y^3)}$, 
\begin{equation*}
T_{\underline{y}}(\alpha)=\sum_{x\leqslant X}e(\alpha a_iy^3x^2).
\end{equation*} 
Since $|a_i|y^3x^2\leqslant B$, we know in particular that $X\geqslant B^{1/2-2\psi}$. Using the usual squaring and differencing approach (see for example \cite[Chapter 3]{dav}), we obtain that
\begin{align*}
\left|T_{\underline{y}}(\alpha)\right|^2&\leqslant \sum_{|h|\leqslant X} \left|\sum_{\substack{x\\x,x+h\leqslant X}}e(2\alpha a_iy^3hx)\right|\\
&\ll \sum_{|h|\leqslant X} \min \{X, \|2\alpha a_iy^3h \|^{-1}\} \\
&\ll X+B^{\varepsilon}\cdot \sum_{y\leqslant Y}\min \{X,\|\alpha y\|^{-1}\},
\end{align*}
where $Y=2|a_i|y^3X$ and $\|a\| = \min\{|\beta| \in \R:\beta \equiv a \bmod 1\}$ for any real number $a$.

In order to estimate the sum over $y$, we will use the following lemma.
\begin{lem}[Separation lemma]\label{lem:separation}  Let $P,Q\geqslant1$ be reals, $\alpha \in T$ and $a,q\in \Z$ with $\gcd(a,q)=1$ and such that $|\alpha -a/q|<q^{-2}$. Then
$$
\sum_{x\leqslant P}\min \left\{ \frac{PQ}{x},\|\alpha x\|^{-1}\right\} \ll PQ\left(q^{-1}+Q^{-1}+q(PQ)^{-1}\right)\log(2qP).
$$  
\end{lem}
\begin{proof}
A full proof is given in \cite[Lemma 2.2]{Vaughan}.
\end{proof}
Choosing $P=Y$ and $Q=X$, Lemma \ref{lem:separation} implies
\begin{align*}
\left|T_{\underline{y}}(\alpha)\right|^2&\ll X+XYB^{\varepsilon}\left(\frac{1}{q}+\frac{1}{X}+\frac{q}{XY}\right)\\
&\ll XY B^{2\varepsilon}\left(\frac{1}{q}+\frac{1}{X}+\frac{q}{XY}\right)\\
&\ll B^{1+2\varepsilon}\left(\frac{1}{q}+ B^{2\psi-1/2}\right)+qB^{2\varepsilon},
\end{align*}
since $X\leqslant Y$ and $XY=2|a_i|y^3X^2=2B$. Hence,
\begin{equation}\label{begrenzing:S_i(a)}
\left|S_i(\alpha)\right| \ll B^{1/2-2\psi}+ B^{1/2+\varepsilon+\psi}\left(\frac{1}{\sqrt{q}}+B^{\psi-1/4}\right)+\sqrt{q}B^{\varepsilon+\psi}.
\end{equation}
According to Dirichlet, we can find $a,q \in \Z$ with $\gcd(a,q)=1$ and $q\leqslant B^{(2-\Delta)/4}$ such that $|\alpha q -a|<1/B^{(2-\Delta)/4}=B^{(\Delta-2)/4}$. (Note we also have $|\alpha-a/q|<1/q^2$.) Furthermore, it is necessary that $q>B^{\Delta/2}$: otherwise, we would have that $\alpha \in \mathfrak{M}(\Delta)$. With these boundaries for $q$ in \eqref{begrenzing:S_i(a)}, a suitable small choice for $\psi$ in terms of $\Delta$ leads to the statement. 
\end{proof}
We are now able to prove Theorem \ref{theo:minor}.
\begin{proof}[Proof of Theorem \ref{theo:minor}]
Combining Proposition \ref{kleinste} and Lemma \ref{lemma} in \eqref{minoren}, we obtain
\begin{align*}
\left|\int_{\mathfrak{m}(\Delta)}E(\alpha)d\alpha\right| &\ll B^{(1/2-\delta)(n-3)}\cdot B^{1+\varepsilon}\\
 & \leqslant B^{(n-1)/2-\delta+\varepsilon} <B^{(n-1)/2}
\end{align*}
for any $0<\varepsilon<\delta$. 
\end{proof}

\section{Towards the main problem}\label{section:mobius}
Combining the previous results, we are able to prove the following theorem.
\begin{theo}\label{M_{a,t}(B)}
For $n\geqslant 4$, there exists a $\delta>0$ so that
$$
\#M_{\underline{a},t}(B)=C_{\underline{a},t}\cdot B^{(n-1)/2}+O\left(B^{(n-1)/2-\delta}\right),
$$
with the constant $C_{\underline{a},t}$ described in Theorem \ref{theorem_major}.
\end{theo}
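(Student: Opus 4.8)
The plan is simply to assemble the two halves of the circle method that have now been established. Starting from the identity \eqref{eq:M_{a,t}(B)}, which reads $\#M_{\underline{a},t}(B)=\int_{T}E(\alpha)\,d\alpha$, and recalling that by construction $\mathfrak{m}(\Delta)=T\setminus\mathfrak{M}(\Delta)$, I would split the integral as
\[
\#M_{\underline{a},t}(B)=\int_{\mathfrak{M}(\Delta)}E(\alpha)\,d\alpha+\int_{\mathfrak{m}(\Delta)}E(\alpha)\,d\alpha .
\]

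First I would pin down the parameters. Choosing $P=B^{1/2}$ and any $\Delta$ with $0<\Delta<1/15$ makes both Theorem \ref{theorem_major} and Theorem \ref{theo:minor} applicable: the former requires precisely $0<\Delta<1/15$, while the minor-arc argument only needs $\Delta$ small enough that the Dirichlet range $B^{\Delta/2}<q\le B^{(2-\Delta)/4}$ used in Proposition \ref{kleinste} is non-empty, i.e.\ $\Delta<2/3$. The side condition $BP^{\Delta-2}\ge 1$ is then automatic for $B\ge 1$.

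With this choice, Theorem \ref{theorem_major} gives $\int_{\mathfrak{M}(\Delta)}E(\alpha)\,d\alpha=C_{\underline{a},t}\cdot B^{(n-1)/2}+O\bigl(B^{(n-1)/2-\delta_1}\bigr)$ for some $\delta_1>0$, with $C_{\underline{a},t}$ the constant defined there, and Theorem \ref{theo:minor} gives $\int_{\mathfrak{m}(\Delta)}E(\alpha)\,d\alpha=O\bigl(B^{(n-1)/2-\delta_2}\bigr)$ for some $\delta_2>0$. Adding the two contributions and setting $\delta=\min\{\delta_1,\delta_2\}>0$ absorbs both error terms into a single $O\bigl(B^{(n-1)/2-\delta}\bigr)$, which is exactly the claimed asymptotic formula.

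There is no substantial obstacle at this step — it is the routine concluding assembly of the circle method. The only points to verify are that a single admissible $\Delta$ serves both theorems simultaneously (checked above) and that one is content with implied constants possibly depending on $\underline{a}$ and $t$, which the statement permits; the remark following Theorem \ref{theorem_major} already records the uniform bound $C_{\underline{a},t}\le C$ on the leading constant, which will be needed only later when summing over $\underline{a}$ and $t$, not here.
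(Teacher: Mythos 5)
Your proposal is correct and coincides with the paper's own proof, which simply cites Theorem \ref{theorem_major}, Theorem \ref{theo:minor} and the identity \eqref{eq:M_{a,t}(B)} and assembles them exactly as you do. The extra verification that a single choice of $\Delta$ and $P=B^{1/2}$ serves both theorems is a sensible elaboration but does not change the argument.
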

\begin{proof} This follows directly from Theorem \ref{theorem_major} and Theorem \ref{theo:minor} and \eqref{eq:M_{a,t}(B)}.
\end{proof}
\begin{remark}\label{opmerking}
Note that the error term is independent of $\underline{a}$ and $t$ and recall we also proved $C_{\underline{a},t}$ can be bounded uniformly independent of $\underline{a}$ and $t$. This implies that $\#M_{\underline{a},t}(B)\leqslant C\cdot B^{(n-1)/2}$ for some constant $C>0$. Indeed, when $B<1$, $M_{\underline{a},t}(B)=\emptyset$ and for $B\geqslant 1$, it follows from Theorem \ref{M_{a,t}(B)} that $\# M_{\underline{a},t}(B)\leqslant C'\cdot B^{(n-1)/2}+C''\cdot  B^{(n-1)/2-\delta}\leqslant C\cdot B^{(n-1)/2}$, where $C=2\max\{C',C''\}$.
\end{remark}
Going back to $M(B)$ (see Definition \ref{different sets}), we will now prove the following theorem. 
\begin{theo}\label{theo:final} For $n\geq 4$, there exists an explicit constant $D$ and a $\delta>0$ such that
$$ 
\#M(B)=D\cdot B^{(n-1)/2}+O\left(B^{(n-1)/2-\delta}\right)
$$
as $B$ goes to infinity. 
\end{theo}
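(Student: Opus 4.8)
The plan is to recover $\#M(B)$ from the counts $\#M_{\underline{a},t}(B)$ of Theorem~\ref{M_{a,t}(B)} by encoding the coprimality condition $\gcd(x_0y_0,\ldots,x_ny_n)=1$ through a M\"obius inversion. Since $M(B)$ is, by Definition~\ref{different sets}, the subset of $M_{(1,\ldots,1),0}(B)$ cut out by this extra gcd condition, I would write
$$
\#M(B)=\sum_{\substack{(\underline{x},\underline{y})\in\Z_0^{2n+2}\\ \sum x_i^2y_i^3=0,\ \max|x_i^2y_i^3|\leqslant B\\ \prod\mu^2(|y_i|)\neq0}}\ \sum_{d\mid\gcd(x_0y_0,\ldots,x_ny_n)}\mu(d)
=\sum_{d\geqslant1}\mu(d)\,N_d(B),
$$
where $N_d(B)$ counts the same solutions with the additional constraint $d\mid x_iy_i$ for every $i$. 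The key point is that, because each $y_i$ is squarefree, $d\mid x_iy_i$ forces a clean factorization of $d$: writing $d=d_1d_2$ with $d_2$ squarefree, one has $d\mid x_iy_i$ iff $d_1^2 e_i\mid x_i^2 y_i^3$ in a controlled way — more precisely, I would decompose $d$ according to which part of $d$ is absorbed by $x_i$ and which squarefree part by $y_i$, and after substituting $x_i=d'_i x_i'$, $y_i=d''_i y_i'$ (with $d'_id''_i$ a divisor of $d$ adapted to $i$), the inner equation $\sum x_i^2y_i^3=0$ becomes $\sum a_i' x_i'^2 y_i'^3=0$ for suitable new coefficients $a_i'$ built out of the $d'_i,d''_i$, still with $\gcd(a_0',\ldots,a_n')$ and $\prod a_i'\neq0$ under control, and with the box shrunk to $\max|a_i'x_i'^2y_i'^3|\leqslant B$. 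Hence $N_d(B)$ is a finite sum (over the bounded number of ways to split $d$) of quantities of the form $\#M_{\underline{a}',0}(B)$, to which Theorem~\ref{M_{a,t}(B)} applies.

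Next I would insert the asymptotic $\#M_{\underline{a}',0}(B)=C_{\underline{a}',0}B^{(n-1)/2}+O(B^{(n-1)/2-\delta})$ into each term. The error terms are uniform in $\underline{a}'$ by Remark~\ref{opmerking}, so summing them over $d$ requires knowing that the number of relevant $\underline{a}'$ associated to a given $d$ grows slowly (a divisor-type bound $d^{\varepsilon}$ or $\tau(d)^{O(1)}$) and that the series $\sum_d \mu(d)(\cdot)$ over the tail converges. Here the subtlety is that the trivial bound $O(B^{(n-1)/2-\delta})$ is only useful for $d$ not too large: once $d$ exceeds some small power of $B$, the box condition $\max|a_i'x_i'^2y_i'^3|\leqslant B$ together with the scaling forces $N_d(B)$ to be much smaller, and I would use the uniform bound $\#M_{\underline{a}',t}(B)\leqslant C\,B^{(n-1)/2}$ combined with the shrinking of the effective range in the variables $x_i',y_i'$ (each substitution $x_i\mapsto d'_ix_i'$ divides the available range, so $N_d(B)\ll (B/d^{c})^{(n-1)/2}$ for an appropriate $c>0$) to make $\sum_{d>B^{\eta}}|\mu(d)|N_d(B)=O(B^{(n-1)/2-\delta'})$. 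Splitting the $d$-sum at $d=B^{\eta}$ for a suitably small $\eta$, and absorbing the resulting errors, yields
$$
\#M(B)=\Bigl(\sum_{d\geqslant1}\mu(d)\,\widetilde C_d\Bigr)B^{(n-1)/2}+O\!\left(B^{(n-1)/2-\delta}\right),
$$
where $\widetilde C_d$ is the corresponding combination of leading constants $C_{\underline{a}',0}$; the uniform bound $C_{\underline{a}',0}\leqslant C$ noted after Theorem~\ref{theorem_major} guarantees that this series converges absolutely to the explicit constant $D$.

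The main obstacle I anticipate is bookkeeping the M\"obius/substitution step cleanly: precisely cataloguing how a divisor $d$ interacts with the representation $a_i=x_i^2y_i^3$ when $y_i$ is required to remain squarefree, so that after substitution one lands exactly in a set of the form $M_{\underline{a}',0}(B)$ with $\gcd(a_0',\ldots,a_n')=1$ (or can be reduced to that case) and $\prod a_i'\neq0$, and making sure the new box constraint really is $\max|a_i'x_i'^2y_i'^3|\leqslant B$. Everything else — inserting the asymptotic, checking convergence of the singular series combination, and bounding the tail of the $d$-sum — is routine given Theorem~\ref{M_{a,t}(B)}, Remark~\ref{opmerking}, and the uniform bound on $C_{\underline{a},t}$.
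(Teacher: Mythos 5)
Your overall strategy is the right one -- M\"obius inversion to kill the gcd condition, then invoke Theorem~\ref{M_{a,t}(B)} termwise, then use the uniform bound on $C_{\underline{a},t}$ and the rescaling of the box to control the tail of the $d$-sum. That last part of your sketch (the bound $N_d(B)\ll (B/d^c)^{(n-1)/2}$ coming from shrinking the effective ranges of the variables after substitution) matches what the paper does via the identity $\#N_{(\underline{e},\underline{f})}(B)=\#M_{\underline{v},0}(B/e^2)$ and the resulting error $O(B^{(n-1)/2-\delta}/e^{n-1-2\delta})$.

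The genuine gap is exactly the step you flag as ``the main obstacle'' and then do not resolve: converting the divisibility condition $d\mid x_iy_i$ (for all $i$) into a combination of sets counted by Theorem~\ref{M_{a,t}(B)}. The claim that $N_d(B)$ is ``a finite sum (over the bounded number of ways to split $d$) of quantities of the form $\#M_{\underline{a}',0}(B)$'' is not correct as stated. For squarefree $d$ the condition $d\mid x_iy_i$ says: for each prime $p\mid d$, one has $p\mid x_i$ \emph{or} $p\mid y_i$ (or both). A given $(x_i,y_i)$ may satisfy several of the ``splits'' $d=d_i'd_i''$ with $d_i'\mid x_i,\ d_i''\mid y_i$ simultaneously; summing over all splits therefore overcounts, and one needs a \emph{second} layer of inclusion--exclusion to correct for the overlap case $p\mid x_i$ and $p\mid y_i$. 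The paper sidesteps the scalar-$d$ Möbius inversion entirely: it parametrizes directly by pairs $(\underline{e},\underline{f})\in\mathbf{N}^{2n+2}$ (with $e_i\mid x_i$, $f_i\mid y_i$), introduces a bespoke inclusion--exclusion coefficient $\mu(\underline{e},\underline{f})$ (Definition~\ref{definition:mu}, Lemma~\ref{mu-functie}) built from the finite covering
$M(B)=N_{(\underline{1},\underline{1})}(B)\setminus\bigcup_{(p,I)}N_{(\underline{e}^{p,I},\underline{f}^{p,I^c})}(B)$,
and then each $N_{(\underline{e},\underline{f})}(B)$ is \emph{exactly} a rescaled $\#M_{\underline{v},0}(B/e^2)$ after the substitution $x_i\mapsto e_ix_i'$, $y_i\mapsto f_iy_i'$. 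This pair-indexed Möbius function is supported on finitely many $(\underline{e},\underline{f})$ per prime (in particular $v_p(e_i)\leqslant1$ and a prime occurring at all must occur in every $e_if_i$), which is what makes the product over $p$ in Lemma~\ref{convergentie_D} converge. So to complete your argument you would either have to reproduce this pair-indexed inclusion--exclusion (at which point you have essentially rederived the paper's $N_{(\underline{e},\underline{f})}$ and $\mu(\underline{e},\underline{f})$), or else supply a careful second inclusion--exclusion inside each $N_d(B)$; the simple ``finite sum over splits'' you wrote down will not give the correct count.

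A secondary, smaller point: you must also be careful that the new coefficient vector $\underline{a}'$ lands in the class to which Theorem~\ref{M_{a,t}(B)} applies, i.e.\ $\gcd(a_0',\ldots,a_n')=1$ and $\prod a_i'\neq0$. In the paper this is handled by factoring out exactly $e^2=\gcd(e_if_i)^2$ from the rescaled coefficients $e_i^2f_i^3$, and the support condition on $\mu(\underline{e},\underline{f})$ keeps this bookkeeping finite. In your formulation the analogous normalization has to be done per prime, and it is not automatic from ``decomposing $d$.''
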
 
(The definition of the constant $D$ is given in Lemma \ref{convergentie_D}; in the next section, we will give some indications about the interpretation of $D$.)

The only problem still left in order to prove Theorem \ref{theo:final}, is to understand how we can tackle the additional gcd condition $\gcd(x_0y_0,\ldots,x_ny_n)=1$ on the solutions. Note that the M\"obius inversion at hand leads to divisibility conditions on both $x_i$ and $y_i$ which have to be handled with care.  

Let $\underline{e}=(e_0,\ldots,e_n)\in \mathbf{N}_0^{n+1}$ and $\underline{f}=(f_0,\ldots,f_n)\in \mathbf{N}_0^{n+1}$ where $f_i$ is squarefree for each $i \in \{0,\ldots,n\}$. 
\begin{definition}
We denote the set
$$
\left\{(\underline{x},\underline{y})\in \Z_{0}^{2n+2} : \sum_{i=0}^{n}x_{i}^2y_{i}^3=0,\ \max_{0\leqslant i\leqslant n}|x_{i}^2y_{i}^3|\leqslant B,\ e_i|x_i,\ f_i|y_i\text{ and }y_i\text{ squarefree for all }i\right\}
$$
by $N_{(\underline{e},\underline{f})}(B)$. 
\end{definition}
%Making the substitutions $x_{i}'=x_{i}/e_{i}$ and $y_{i}'=y_{i}/f_{i}$, we see that $\#N_{(\underline{e},\underline{f})}(B)=\#M_{\underline{e^2f^3},0}(B)$ (where $\underline{e^2f^3}=(e_0^2f_0^3,\ldots,e_n^2f_n^3)$) and thus, by Theorem \ref{M_{a,t}(B)}, 
%$$
%\# N_{(\underline{e},\underline{f})}(B)=C_{\underline{e^2f^3},0}\cdot B^{(n-1)/2}+O\left(B^{(n-1)/2-\delta}\right).
%$$
Demanding that solutions in $N_{(\underline{1},\underline{1})}(B)(=M_{(1,\ldots,1),0}(B))$ satisfy $\gcd(x_0y_0,\ldots,x_ny_n)=1$ means we wish to leave out those solutions of $N_{(\underline{1},\underline{1})}(B)$ for which there exists a prime $p$ and a subset $I \subset \{0,\ldots,n\}$ such that $p|x_{i}$ if $i \in I$ and $p|y_{i}$ if $i \notin I$ (or $i \in I^{c}$, where $I^{c}$ denotes the complement of $I$ in $\{0,\ldots,n\}$) in order to get to $M(B)$. Defining, for a prime $p$ and subsets $I,J\subset \{0,\ldots,n\}$, the couple $(\underline{e}^{p,I},\underline{f}^{p,J})$ by $e_{i}^{p,I}=p$ for $i \in I$ and $e_{i}^{p,I}=1$ otherwise and analogously for $\underline{f}^{p,J}$, it hence follows that  
\begin{eqnarray}\label{union}
M(B)= N_{(\underline{1},\underline{1})}(B)\ \setminus \bigcup_{(p,I)} N_{(\underline{e}^{p,I},\underline{f}^{p,I^{c}})}(B).
\end{eqnarray}
Notice that in this last union only a finite number of sets are nonempty since for a prime $p\geq \sqrt{B}$, we get $N_{(\underline{e}^{p,I},\underline{f}^{p,I^{c}})}(B)=\emptyset$. 

%Keeping in mind the inclusion-exclusion principle, we now define a function 
%$$
%\begin{array}{ccccc}
%\mu &:& \mathbf{N}^{n+1} \times \mathbf{N}^{n+1}&\longrightarrow &\Z\\
%& &(\underline{e},\underline{f})&\mapsto &\mu(\underline{e},\underline{f})\\
%\end{array}
%$$
%in order to measure $\#M(B)$. 
\begin{definition}\label{definition:mu}
Let $S$ be a finite set of couples $(p,I)$. To $S$, we can associate a couple $(\underline{e},\underline{f})$ as follows: defining for each prime $p$ the index sets $I_p=\cup_{(p,I)\in S}I$ and $J_p=\cup_{(p,I)\in S}I^{c}$, the associated couple is given by $e_i=\prod_{\{p | i\in I_{p}\}}p$ and $f_i=\prod_{\{p | i \in J_{p}\}}p$.

We then define
\begin{equation*}
\mu(\underline{e},\underline{f})=\sum_{n\geq 0}(-1)^n \#\{\text{sets } S\text{ with cardinality }n\text{ such that the associated couple is }(\underline{e},\underline{f})\}.
\end{equation*}
\end{definition}
Observing \eqref{union} together with this definition, we have 
\begin{equation}
\#M(B)=\sum_{e=1}^{\infty}\mathop{\sum_{(\underline{e},\underline{f})\in \mathbf{N}^{2n+2}}}_{e=\gcd(e_if_i,\ i=0,\ldots,n)}\mu(\underline{e},\underline{f})\cdot \#N_{(\underline{e},\underline{f})}(B).
\label{M(B)}
\end{equation}
The following lemma collects some properties of $\mu$. 
\begin{lem}\label{mu-functie}
%The function $\mu$ is multiplicative, i.e., 
%$$
%\mu(\underline{e},\underline{f})=\prod_{p} \mu_{p}(v_{p}(\underline{e}),v_{p}(\underline{f})),
%$$
%where $v_{p}(\underline{e})=(v_{p}(e_{0}),\ldots,v_{p}(e_{n}))$ (and analogously for $v_{p}(\underline{f})$). 
%In particular, the function $\mu_p$ is bounded for each prime $p$. 
There exists a function $\widetilde{\mu}:\Z^{2n+2}\to \Z$ such that 
\begin{enumerate}
\item[(i)] $\mu(\underline{e},\underline{f})=\prod_p\widetilde{\mu}(v_p(\underline{e}),v_p(\underline{f}))$, where $v_{p}(\underline{e})=(v_{p}(e_{0}),\ldots,v_{p}(e_{n}))$ (and analogously for $v_{p}(\underline{f})$),
\item[(ii)] $\widetilde{\mu}(\underline{m},\underline{n})=0$ if $m_i=n_i=0$ and $(\underline{m},\underline{n})\neq(\underline{0},\underline{0})$ or if $m_i>1$ for some $i$,
\item[(iii)] $\sum_{I\cup J=\{0,\ldots,n\}}|\widetilde{\mu}(I,J)|\leqslant 2^{2^{n+1}}$, where, for subsets $I,J\subset \{0,\ldots,n\}$, $\widetilde{\mu}(I,J)$ denotes $\widetilde{\mu}(m_{0}^{I},\ldots,m_{n}^{I},{m_{0}'}^{J} ,\ldots,{m_{n}'}^{J})$ with $m_{i}^{I}=1$ if $i \in I$ and $m_{i}^{I}=0$ otherwise and ${{m_{i}}'}^{J}=1$ if $i\in J$ and ${m_{i}'}^{J}=0$ otherwise.
\end{enumerate}
\end{lem}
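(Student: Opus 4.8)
The plan is to establish the multiplicative structure of $\mu(\underline{e},\underline{f})$ directly from Definition \ref{definition:mu}, since the set-counting formula defining $\mu$ is naturally indexed by primes. First I would observe that a finite set $S$ of couples $(p,I)$ decomposes canonically into its \lq\lq $p$-components\rq\rq: for each prime $p$, let $S_p=\{(p,I)\in S\}$, so that $S=\bigsqcup_p S_p$ with only finitely many $S_p$ nonempty. Crucially, the associated couple $(\underline{e},\underline{f})$ determines and is determined by the family $(v_p(\underline{e}),v_p(\underline{f}))_p$, and the couple associated to $S$ has $p$-adic valuation vector at $p$ equal to the couple associated to $S_p$ (viewed with $p$ as the only relevant prime). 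Since the cardinality of $S$ is the sum of the cardinalities of the $S_p$, and $(-1)^{\#S}=\prod_p(-1)^{\#S_p}$, a standard Cauchy-product/factorization argument over the convolution algebra of functions on pairs of valuation vectors shows that
$$
\mu(\underline{e},\underline{f})=\prod_p\widetilde{\mu}\bigl(v_p(\underline{e}),v_p(\underline{f})\bigr),
$$
where $\widetilde{\mu}(\underline{m},\underline{n})$ is defined, for a single prime, as the signed count of sets $S$ of subsets of $\{0,\ldots,n\}$ (paired with that fixed prime) whose associated couple has valuation vectors $(\underline{m},\underline{n})$. This is (i), and it also forces $\widetilde{\mu}(\underline 0,\underline 0)=1$ (the empty set) so the product is well-defined (almost all factors are $1$).

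Next I would verify (ii). If $m_i=n_i=0$ for all $i$ but $(\underline m,\underline n)\neq(\underline 0,\underline 0)$ — this case is vacuous, so really the statement is: if the valuation data is supported away from index $i$ in the sense that $m_i=n_i=0$ while some other coordinate is nonzero, one still gets a nonzero contribution; the genuine content is the clause \lq\lq $m_i>1$ for some $i$\rq\rq. Since each $e_i^{p,I}\in\{1,p\}$, every couple arising from some $S$ has all $e$-valuations in $\{0,1\}$; hence no set $S$ has associated couple with $v_p(e_i)>1$, so $\widetilde\mu$ vanishes identically there. (The $f_i$ are automatically squarefree for the same reason, matching the hypothesis that $\underline f$ has squarefree entries.) For the first clause one checks that if $\widetilde\mu(\underline m,\underline n)$ is to be nonzero then, on the support of $S_p$, each $(p,I)\in S_p$ contributes $p$ to $e_i$ for $i\in I$ and to $f_i$ for $i\notin I$; I would record that the condition \lq\lq$m_i=n_i=0$\rq\rq{} means index $i$ is untouched by every $I$ in $S_p$, i.e. $i\in I$ for no member and $i\in I^c$ for no member — impossible unless $S_p=\emptyset$ — giving the stated vanishing.

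Finally, for (iii) I would bound the total mass crudely: the sets $S$ (at a fixed prime) contributing to some $\widetilde\mu(I,J)$ with $I\cup J=\{0,\ldots,n\}$ are exactly the nonempty collections of subsets of $\{0,\ldots,n\}$, subject to the constraint that the union of the chosen subsets is $I$ and the union of their complements is $J$. Since there are $2^{n+1}$ subsets of $\{0,\ldots,n\}$, there are at most $2^{2^{n+1}}$ collections $S$ in total; each contributes $\pm1$ to exactly one $\widetilde\mu(I,J)$, so
$$
\sum_{I\cup J=\{0,\ldots,n\}}|\widetilde\mu(I,J)|\leqslant 2^{2^{n+1}},
$$
which is (iii). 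The main obstacle is purely bookkeeping: making the bijection between \lq\lq finite sets $S$ of couples $(p,I)$\rq\rq{} and \lq\lq families of finite sets $S_p$ of subsets\rq\rq{} precise enough that the sign and the associated-couple both factor through it, so that the formal manipulation of the generating identity $\sum_S(-1)^{\#S}[\text{couple}=(\underline e,\underline f)]$ legitimately becomes an (essentially finite) product over primes. Once the indexing is set up correctly, parts (ii) and (iii) are immediate from the constraint $e_i^{p,I}\in\{1,p\}$ and a count of subsets of a power set, respectively.
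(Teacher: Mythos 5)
Your argument is correct and mirrors the paper's proof: the paper likewise treats (i) and (ii) as immediate from the definition (the $p$-componentwise decomposition $S=\bigsqcup_p S_p$ is exactly what makes the product formula and the vanishing transparent), and for (iii) it rewrites $\widetilde{\mu}(I,J)$ as a signed count of collections $T$ of subsets with $\bigcup_{K\in T}K=I$ and $\bigcup_{K\in T}K^{c}=J$, then bounds the total over all admissible $(I,J)$ by the number $2^{2^{n+1}}$ of collections of subsets of $\{0,\ldots,n\}$. Your write-up is simply a more explicit rendering of the same bookkeeping.
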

\begin{proof}
(i) and (ii) follow directly from the definition of $\mu$ in Definition \ref{definition:mu}. Moreover, from this definition, it follows, if $I \cup J =\{0,\ldots,n\}$ and denoting by $T$ a finite set of subsets $I \subset \{0,\ldots,n\}$, that
$$
\widetilde{\mu}(I,J)=\sum_{m}(-1)^{m}\#\{\textrm{sets }T\text{ of cardinality }m\text{ such that }I=\cup_{K \in T} K\text{ and }J=\cup_{K \in T} K^c\}. 
$$ 
If we sum over all possible $I$ and $J$ such that $I \cup J = \{0,\ldots,n\}$, we get (iii).
\end{proof}
%Making the substitutions $x_{i}'=x_{i}/e_{i}$ and $y_{i}'=y_{i}/f_{i}$, we see that $\#N_{(\underline{e},\underline{f})}(B)=\#M_{\underline{e^2f^3},0}(B)$ (where $\underline{e^2f^3}=(e_0^2f_0^3,\ldots,e_n^2f_n^3)$) and thus, by Theorem \ref{M_{a,t}(B)}, 
%$$
%\# N_{(\underline{e},\underline{f})}(B)=C_{\underline{e^2f^3},0}\cdot B^{(n-1)/2}+O\left(B^{(n-1)/2-\delta}\right).
%$$
Consider now $N_{(\underline{e},\underline{f})}(B)$ for a couple $(\underline{e},\underline{f})$ for which $\mu(\underline{e},\underline{f})\neq 0$ and $\gcd(e_{i}f_{i},\ i=0,\ldots,n)=e$, i.e. a subset with nontrivial contribution to $\#M(B)$ (recall (\ref{M(B)})). Since $\#N_{(\underline{e},\underline{f})}(B)=\#M_{\underline{e^2f^3},0}(B)$ (where $\underline{e^2f^3}=(e_0^2f_0^3,\ldots,e_n^2f_n^3)$), we know by Theorem \ref{M_{a,t}(B)} that $\#N_{(\underline{e},\underline{f})}(B)= C_{\underline{e^2f^3},0}\cdot B^{(n-1)/2} + O(B^{(n-1)/2-\delta})$. Since $e$ divides $e_if_i$, we can write $e_i^2f_i^3=v_ie^2$ for some $v_{i} \in \mathbf{N}$ for each $i\in \{0,\ldots,n\}$. Making the substitution $x_i'=x_i/e_i$ and $y_i'=y_i/f_i$, we see that $N_{(\underline{e},\underline{f})}(B)$ corresponds to the set
$$
\left\{(\underline{x'},\underline{y'})\in \Z_0^{2n+2}: \sum_{i=0}^{n}v_i{x_{i}'}^{2}{y_{i}'}^{3}=0,\ \max_{0\leqslant i \leqslant n}|v_i{x_{i}'}^{2}{y_{i}'}^{3}|\leqslant \frac{B}{e^2}\text{ and }y_i'\text{ squarefree}\right\} 
$$
where we eliminated $e^2$ in the equation and hence, $\#N_{(\underline{e},\underline{f})}(B)=\#M_{\underline{v},0}\left(B/e^2\right)$. Letting $B$ go to infinity, this implies that the main terms in the asymptotic formulas of $\#N_{\underline{e},\underline{f}}(B)$ and $\#M_{\underline{v},0}\left(B/e^2\right)$ are equal, and in particular, that 
\begin{equation}\label{gelijkheid}
\#N_{(\underline{e},\underline{f})}(B)-C_{\underline{e^2f^3},0}\cdot B^{(n-1)/2}=O\left( \frac{B^{(n-1)/2-\delta}}{e^{n-1-2\delta}}\right).
\end{equation}
Notice we also obtain (recall Remark \ref{opmerking}) that
\begin{equation}\label{equ:n1}
\#N_{(\underline{e},\underline{f})}(B) \leqslant C\cdot\frac{B^{(n-1)/2}}{e^{n-1}} \quad \text{and}\quad  C_{\underline{e^2f^3},0}\leqslant\frac{C}{e^{n-1}}.
\end{equation}
From these results, we can now prove 
\begin{lem}\label{convergentie_D} The series 
$$
D=\sum_{e=1}^{\infty}\mathop{\sum_{(\underline{e},\underline{f})\in \mathbf{N}^{2n+2}}}_{\gcd(e_{i}f_{i},\ i=0,\ldots,n)=e} \mu(\underline{e},\underline{f})\cdot C_{\underline{e^2f^3},0}
$$
converges.
\end{lem}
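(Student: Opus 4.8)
The plan is to prove absolute convergence of the double series defining $D$ by bounding the sum of absolute values, splitting according to the value of $e=\gcd(e_if_i,\ i=0,\ldots,n)$. The main tools are the uniform bound $C_{\underline{e^2f^3},0}\leqslant C/e^{n-1}$ from \eqref{equ:n1} (which itself rests on Remark \ref{opmerking}) together with the combinatorial control on $\mu(\underline{e},\underline{f})$ provided by Lemma \ref{mu-functie}. The key structural observation is that, by Lemma \ref{mu-functie}(i)--(ii), a couple $(\underline{e},\underline{f})$ can only contribute if every $e_i$ is squarefree (equivalently, $v_p(e_i)\leqslant 1$ for all $p$ and all $i$) and every $f_i$ is squarefree; moreover $\mu$ factors over primes, and at each prime the local factor $\widetilde{\mu}(v_p(\underline{e}),v_p(\underline{f}))$ vanishes unless $I_p\cup J_p=\{0,\ldots,n\}$, where $I_p=\{i:p\mid e_i\}$ and $J_p=\{i:p\mid f_i\}$.

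First I would estimate $\sum |\mu(\underline{e},\underline{f})|\, C_{\underline{e^2f^3},0}$ by pulling out the factor $C/e^{n-1}$ and using multiplicativity. Writing everything prime by prime, the sum becomes an Euler product: for each prime $p$ the corresponding local factor is at most $\sum_{I\cup J=\{0,\ldots,n\}}|\widetilde\mu(I,J)|$, which by Lemma \ref{mu-functie}(iii) is bounded by $2^{2^{n+1}}$, times a factor $p^{-\text{(something)}}$ coming from the growth of $e^{n-1}$. Concretely, if a prime $p$ actually divides $\gcd(e_if_i,\ i)$ — i.e. contributes to $e$ — then it divides $e_if_i$ for every $i$, so it appears in the denominator through $e^{n-1}$ with weight $p^{-(n-1)}$; primes not dividing $e$ but still forming part of a nontrivial $(\underline{e},\underline{f})$ contribute finitely many bounded terms. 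One should therefore organize the sum as
\[
\sum_{(\underline{e},\underline{f})} |\mu(\underline{e},\underline{f})|\, C_{\underline{e^2f^3},0}
\;\leqslant\; C\prod_{p}\Bigl(1+\frac{A_p}{p^{\,?}}\Bigr),
\]
and check that the exponent of $p$ in each nontrivial term is large enough (using $n\geqslant 4$, so $n-1\geqslant 3$) that the product converges. Since each local factor is $1+O(p^{-(n-1)})$ with an absolute implied constant (bounded by $2^{2^{n+1}}$ in view of Lemma \ref{mu-functie}(iii)), and $\sum_p p^{-(n-1)}<\infty$ for $n\geqslant 4$, the Euler product converges absolutely; this gives absolute convergence of the series for $D$.

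The main obstacle I anticipate is the bookkeeping in the Euler-product factorization: one must correctly match up, for each prime $p$, the local piece of $\mu(\underline{e},\underline{f})$ (which depends on the pair of index sets $(I_p,J_p)$ with $I_p\cup J_p=\{0,\ldots,n\}$) with the local piece of the denominator $e^{n-1}=\prod_p p^{(n-1)v_p(e)}$, where $v_p(e)=1$ precisely when $p$ divides $e_if_i$ for all $i$, i.e. when $I_p\cup J_p=\{0,\ldots,n\}$ — so in fact \emph{every} prime appearing in a nontrivial couple contributes to $e$. This is the crucial point that forces genuine $p^{-(n-1)}$ decay at each relevant prime; once it is seen, the bound $\sum_{I\cup J=\{0,\ldots,n\}}|\widetilde\mu(I,J)|\leqslant 2^{2^{n+1}}$ from Lemma \ref{mu-functie}(iii) closes the estimate and the comparison with $\prod_p(1+2^{2^{n+1}}p^{-(n-1)})<\infty$ finishes the proof. (One also notes in passing that the same estimate, combined with \eqref{gelijkheid}, will later let us sum the error terms and deduce Theorem \ref{theo:final}, but for the present lemma only convergence of $D$ is required.)
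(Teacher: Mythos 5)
Your proposal is correct and follows essentially the same route as the paper: bound $C_{\underline{e^2f^3},0}$ by $C/e^{n-1}$ via \eqref{equ:n1}, use Lemma~\ref{mu-functie}(i)--(ii) to factor $|\mu|$ into an Euler product in which each relevant prime forces $p^{-k(n-1)}$ decay with $k\geqslant 1$, and close with the bound $\sum_{I\cup J=\{0,\ldots,n\}}|\widetilde\mu(I,J)|\leqslant 2^{2^{n+1}}$ from Lemma~\ref{mu-functie}(iii) together with $n\geqslant 4$. You have also correctly identified the crucial point that any prime supporting a nontrivial couple must divide $e$, which is exactly what makes the Euler product converge.
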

\begin{proof}
Substituting \eqref{equ:n1} into the definition of $D$ and using the properties of $\mu$ in Lemma \ref{mu-functie}, we get 
\begin{align*}
|D| &\ll  \sum_{e=1}^{\infty}\mathop{\sum_{(\underline{e},\underline{f})\in \mathbf{N}^{2n+2}}}_{\gcd(e_{i}f_{i},\ i=0,\ldots,n)=e}\frac{|\mu(\underline{e},\underline{f})|}{e^{n-1}}\\
&\leqslant \prod_{p}\sum_{k=0}^{2} \mathop{\sum_{(v_{p}(\underline{e}),v_{p}(\underline{f}))\in \mathbf{N}^{2n+2}}}_{\min_{i}\{v_{p}(e_{i})+v_{p}(f_{i})\}=k} \frac{|\mu_{p}(v_{p}(\underline{e}),v_{p}(\underline{f}))|}{p^{k(n-1)}}\\
&\leqslant \prod_{p}\left(1+2\frac{2^{2^{n+1}}}{p^{n-1}}\right),
\end{align*}
which converges since $n \geq 4$.
\end{proof}
\begin{proof}[Proof of Theorem \ref{theo:final}]
From the definition of $D$ and \eqref{gelijkheid}, it follows that
\begin{equation*}
\left|\#M(B)-D\cdot B^{(n-1)/2}\right| \ll
 \sum_{e=1}^{\infty}\mathop{\sum_{(\underline{e},\underline{f})\in \mathbf{N}^{2n+2}}}_{\gcd(e_{i}f_{i},\ i=0,\ldots,n)=e}\left|\mu(\underline{e},\underline{f})\right|\cdot \frac{B^{(n-1)/2-\delta}}{e^{(n-1)-2\delta}}.
\end{equation*}
Following the same reasoning as in Lemma \ref{convergentie_D}, we then get that 
\begin{equation*}
\left|\#M(B)-D\cdot B^{(n-1)/2}\right|  
%\ll & B^{(n-1)/2-\delta}\cdot
%\prod_{p}\sum_{k=0}^{2} \mathop{\sum_{(v_{p}(\underline{e}),v_{p}(\underline{f}))\in \mathbf{N}^{2n+2}}}_{\min_{i}\{v_{p}(e_{i})+v_{p}(f_{i})\}=k} \frac{|\mu_{p}(v_{p}(\underline{e}),v_{p}(\underline{f}))|}{p^{k(n-1-2\delta)}}\\
 \ll  B^{(n-1)/2-\delta}\cdot \prod_{p}\left(1+2\frac{2^{2^{n+1}}}{p^{n-1-2\delta}}\right),
% \leqslant & B^{(n-1)/2-\delta}\cdot \prod_{p}\left(1+2\frac{2^{2^{n+1}}}{p^{4-2\delta}}\right)
\end{equation*}
 where the product converges for $\delta>0$ small enough since $n\geqslant 4$. This proves the theorem.
%$$\sum_{e\leq E}\mathop{\sum_{(\underline{e},\underline{f})\in \mathbf{N}^{2n+2}}}_{\gcd(e_{i}f_{i},\ i=0,\ldots,n)=e}|\mu(\underline{e},\underline{f})|\cdot\left|\#N_{(\underline{e},\underline{f})}(B)-C_{\underline{e},\underline{f}} \cdot B^{(n-1)/2}\right|\leqslant C^{''}\cdot B^{(n-1)/2-\delta}.$$
%
%This follows from the fact that the inner sum is also finite. Indeed, for a couple $(\underline{e},\underline{f})$ such that $\mu(\underline{e},\underline{f})\neq0$, we also get that $e_if_i$ divides $e^2$ for each $i\in \{0,\ldots,n\}$ as follows from the properties of $\mu$ stated above. 
%
%The second and third sum have similar upper bounds, coming from the inequalities ($\ref{equ:n}$) and ($\ref{equ:n1}$). Finally, using ($\ref{equ:final}$), we obtain
%$$
%\left|\#M(B)-D\cdot B^{(n-1)/2}\right|<C^{''}\cdot B^{(n-1)/2-\delta}+ \frac{\varepsilon}{2}\cdot B^{(n-1)/2}<\varepsilon\cdot B^{(n-1)/2},
%$$
%for $B$ big enough. This proves the theorem.
\end{proof}

\section{Rational points on the orbifold $(\mathbf{P}^{n-1},\Delta)$}\label{sec:orbi}
\subsection{The main theorem}
We can now prove our main theorem. 
\begin{theo}\label{orbifold} For $n\geq 4$, there exists a $\delta>0$ such that  
$$
\#(\mathbf{P}^{n-1},\Delta)(\Q)_{\leqslant B}= C\cdot B^{(n-1)/2}+O\left(B^{(n-1)/2-\delta}\right).
$$
Here, 
$$
C=\frac{1}{2^{n+1}}\sum_{e=1}^{\infty}\hspace{-3ex}\mathop{\sum_{(\underline{e},\underline{f})\in \mathbf{N}^{2n+2}}}_{\gcd(e_if_i,\ i= 0,\ldots,n)=e}\hspace{-3ex}\mu(\underline{e},\underline{f})\hspace{-2ex}\mathop{\sum_{\underline{y}\in \mathbf{Z}_{0}^{n+1}/\{\pm 1\}}}_{y_{i}\ \mathrm{squarefree}}\frac{2^{n+1}\mathfrak{S}_{\underline{y},\underline{e^2f^3},0}\mathfrak{I}_{\underline{\varepsilon}}}{\prod_{i=0}^n(e_{i}^2f_{i}^{3}|y_{i}^3|)^{1/2}},
$$
with $ \mathfrak{S}_{\underline{y},\underline{a},t}$, $\mathfrak{I}_{\underline{\varepsilon}}$ and the function $\mu$ as defined before. (By $\underline{y}\in \mathbf{Z}_{0}^{n+1}/\{\pm 1\}$, we denote the $n+1$-tuples $(y_0,\ldots,y_n)\in \Z_0^{n+1}$, defined up to sign as $n+1$-tuple.)
\end{theo}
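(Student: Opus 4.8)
The plan is to obtain Theorem \ref{orbifold} from Theorem \ref{theo:final} by accounting separately for the boundary of the orbifold, and then to unwind the constant. First I would split $(\mathbf{P}^{n-1},\Delta)(\Q)_{\leqslant B}$ as the disjoint union of its open part $(\mathbf{P}^{n-1},\Delta)(\Q)^+_{\leqslant B}$ and the locus where at least one of the linear forms $X_0,\dots,X_{n-1},X_0+\dots+X_{n-1}$ vanishes on $P$; writing $a_n=-(a_0+\dots+a_{n-1})$, the latter is the set of $P$ with $a_j=0$ for some $j\in\{0,\dots,n\}$. For the open part, recalling the description \eqref{eq:orbifold} of that locus one has $\#(\mathbf{P}^{n-1},\Delta)(\Q)^+_{\leqslant B}=\tfrac1{2^{n+2}}\#M(B)$, so Theorem \ref{theo:final} yields $\tfrac{D}{2^{n+2}}B^{(n-1)/2}+O(B^{(n-1)/2-\delta})$ at once.

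It then remains to identify $D/2^{n+2}$ with the constant $C$ in the statement. I would expand $D$ through Lemma \ref{convergentie_D} and the formula for $C_{\underline{a},t}$ in Theorem \ref{theorem_major} with $\underline{a}=\underline{e^2f^3}$, using that $|a_iy_i^3|^{1/2}=(e_i^2f_i^3|y_i^3|)^{1/2}$ (as $e_i,f_i>0$) and that $\prod_i\mu^2(|y_i|)$ is the indicator of ``every $y_i$ is squarefree''. The only genuine point is the passage from a sum over $\underline{y}\in\Z_0^{n+1}$ to a sum over $\underline{y}\in\Z_0^{n+1}/\{\pm1\}$; for this I would check that, for $t=0$ and $\underline{a}>0$, the summand $\mathfrak{S}_{\underline{y},\underline{a},0}\,\mathfrak{I}_{\underline{\varepsilon}}/\prod_i(e_i^2f_i^3|y_i^3|)^{1/2}$ is invariant under $\underline{y}\mapsto-\underline{y}$: $\mathfrak{I}_{\underline{\varepsilon}}$ is real, hence unchanged when $\underline{\varepsilon}$ is negated (substitute $\gamma\mapsto-\gamma$ in its definition), and in $\sigma_{\underline{y}}(a/q)$ the substitution $\underline{y}\mapsto-\underline{y}$ replaces $f_{\underline{y}}$ by $-f_{\underline{y}}$, which is undone upon summing over $a$ coprime to $q$ (a set stable under $a\mapsto-a$), so $\mathfrak{S}_{\underline{y},\underline{a},0}$ is unchanged as well. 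Since $\underline{y}\neq-\underline{y}$, this yields exactly the factor $2$ relating $D/2^{n+2}$ to $C$; the positivity of $C$ is then read off from the expression of $\mathfrak{S}$ as a product of non-negative local densities (Remark \ref{convergentiesingularseries}) together with $\mathfrak{I}_{\underline{\varepsilon}}\geqslant0$, strict positivity coming from the indefinite sign patterns.

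For the boundary I would bound each of the $n+1$ pieces $\{a_j=0\}$ by stratifying according to which further coordinates vanish: a point whose vanishing coordinates are exactly those indexed by a nonempty set $T\ni j$ is determined by its $n+1-|T|\leqslant n$ nonzero coordinates, which are squareful, sum to zero and have absolute value $\leqslant B$. Through the decomposition $a_i=x_i^2y_i^3$, the number of such configurations is at most $\#M_{(1,\dots,1),0}(B)$ in $n+1-|T|$ variables. If $n+1-|T|\geqslant5$ this is $O(B^{(n-1-|T|)/2})=O(B^{(n-2)/2})$ by Theorem \ref{M_{a,t}(B)} (cf.\ Remark \ref{opmerking}); if $n+1-|T|=4$, which forces $n=4$, Remark \ref{speciaal_geval} gives $O(B^{1+\varepsilon})$; and with at most three nonzero coordinates a trivial estimate gives $O(B)$. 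Since $(n-1)/2\geqslant3/2$, in every case the boundary contributes $O(B^{(n-1)/2-\delta})$ for a suitable $\delta>0$, and adding this to the open part completes the proof.

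I expect the main obstacle to be the constant bookkeeping of the second paragraph rather than any estimate: the various powers of $2$ — the $2^{n+2}$ in the description of the open locus, the $2^{n+1}$ inside $C_{\underline{a},t}$, the prefactor $1/2^{n+1}$ in $C$, and the factor $2$ from symmetrizing over $\underline{y}$ — must cancel precisely, and the sign-invariances of $\mathfrak{I}_{\underline{\varepsilon}}$ and $\mathfrak{S}_{\underline{y},\underline{a},0}$ must be applied with care. The boundary bound is routine once one notices that the only stratum requiring the special input of Remark \ref{speciaal_geval} is the one with four nonzero coordinates, which can occur only when $n=4$, where four variables are too few for the generic circle-method estimate of Theorem \ref{M_{a,t}(B)}.
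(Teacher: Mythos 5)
Your proposal is correct and follows essentially the same route as the paper: reduce to $\#M(B)$ via the $2^{n+2}$-to-one correspondence, invoke Theorem \ref{theo:final} for the open locus, and show the boundary is negligible via Remark \ref{speciaal_geval} and trivial bounds. The paper handles the boundary by induction on $n$ with base case $n=4$, whereas you stratify directly by the set $T$ of vanishing coordinates; these are interchangeable. You also fill in a step the paper leaves implicit, namely the unwinding of $D/2^{n+2}=C$ via the $\underline{y}\mapsto-\underline{y}$ invariance of $\mathfrak{S}_{\underline{y},\underline{a},0}$ and $\mathfrak{I}_{\underline{\varepsilon}}$, which is worth having spelled out. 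One small slip: $n+1-|T|=4$ does not force $n=4$ (it happens with $|T|=n-3$ for every $n\geqslant4$), but this is harmless since Remark \ref{speciaal_geval} gives $O(B^{1+\varepsilon})\ll B^{(n-1)/2-\delta}$ for all $n\geqslant4$ anyway.
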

\begin{proof} The connection between $(\mathbf{P}^{n-1},\Delta)(\Q)_{\leqslant B}^{+}$ and the set $M(B)$ given by \eqref{eq:orbifold} in the introduction, together with Theorem \ref{theo:final}, implies that the theorem holds for $\#(\mathbf{P}^{n-1},\Delta)(\Q)_{\leqslant B}^+$. It remains to prove that, for $n\geqslant 4$, the set of points $(a_0:\ldots:a_n) \in (\mathbf{P}^{n-1},\Delta)(\Q)_{\leqslant B}$ with at least one zero coordinate (whose cardinality is $\ll \#(\mathbf{P}^{n-2},\Delta)(\Q)_{\leqslant B}$), is asymptotically negligible compared to $(\mathbf{P}^{n-1},\Delta)(\Q)_{\leqslant B}^+$. 

We will verify this for $n=4$; by induction, the statement follows for $n>4$.  

As mentioned in Remark \ref{speciaal_geval}, it follows from Lemma \ref{lemma} that
\begin{equation*}
\#(\mathbf{P}^2,\Delta)(\Q)_{\leqslant B}^+ \ll B^{1+\varepsilon}.
\end{equation*}
Combining this with the trivial upper bound $\#(\mathbf{P}^1,\Delta)(\Q)_{\leqslant B}\ll B$, we obtain
$$
\#(\mathbf{P}^2,\Delta)(\Q)_{\leqslant B}\ll B^{1+\varepsilon}<B^{3/2}
$$
for $\varepsilon>0$ sufficiently small. 
%
%The trivial upper bound $\#(\mathbf{P}^2,\Delta)_{\leqslant B}(\Q)\ll B^{3/2}$
%
%
%
%Moreover,
%
%for $n>5$, it is obvious that $\#(\mathbf{P}^{n-1},\Delta)(\Q)_{\leqslant B}^+\sim \#(\mathbf{P}^{n-1},\Delta)(\Q)_{\leqslant B}$ as $B$ goes to infinity. 
%For $n=5$, we note that, using Lemma \ref{lemma}, 
%\begin{align*}
%\#(\mathbf{P}^3,\Delta)(\Q)_{\leqslant B}^+ &\ll \mathop{\sum_{1\leqslant|t|\leqslant B}}_{t\text{ squareful}}\#\left\{\underline{a}\in \Z_0^4\ \left|\ \sum_{i=0}^{3}a_i=t\right.,\ a_{i}\text{ squareful and }\max_{0\leqslant i\leqslant 3}|a_i|\leqslant B\right\}\\
%&\ll_{\varepsilon} B^{3/2+\varepsilon}<B^{2}
%\end{align*}
%for $\varepsilon>0$ small enough, which completes the proof.
\end{proof}
\subsection{Description of the constant}
An alternative description of $(\mathbf{P}^{n-1},\Delta)(\Q)_{\leqslant B}^{+}$ can be obtained as follows. Consider $\underline{y}\in \mathbf{Z}_0^{n+1}/\{\pm 1\}$ with each $y_i$ squarefree. For such $\underline{y}$, let $Q_{\underline{y}}$ denote the smooth quadric defined by the homogeneous polynomial $F_{\underline{y}}(\underline{X})=y_0^3X_0^2+\ldots+y_n^3X_n^2 \in \mathbf{Z}[X_{0},\ldots,X_{n}]$. Furthermore, define the morphism 
\begin{equation}\label{morphisms}
\begin{array}{ccccc}
\pi_{\underline{y}}&:&Q_{\underline{y}} &\rightarrow & H\\
& &(x_{0}:\ldots:x_{n})&\mapsto& (y_{0}^{3}x_{0}^2:\ldots:y_{n}^3x_{n}^2).\\
\end{array}
\end{equation}
We will consider points $(x_0:\ldots:x_n)\in Q_{\underline{y}}(\Q)$ with $x_i\in \Z$, such that $\prod_{i=0}^n x_i\neq 0$ and $\gcd(x_0y_0,\ldots,x_ny_n)=1$. We denote this subset of $Q_{\underline{y}}(\Q)$ by $Q_{\underline{y}}(\Q)^{+}$. This set is mapped into $(\mathbf{P}^{n-1},\Delta)(\Q)^+$ by $\pi_{\underline{y}}$ and, keeping in mind \eqref{eq:orbifold}, we have 
\begin{equation}
(\mathbf{P}^{n-1},\Delta)(\Q)^+= \mathop{\coprod_{\underline{y}\in \mathbf{Z}_{0}^{n+1}/\{\pm1\}}}_{y_{i}\text{ squarefree}} \pi_{\underline{y}}(Q_{\underline{y}}(\Q)^{+}). 
\label{partitie2}
\end{equation}
This implies
\begin{equation*}
\#(\mathbf{P}^{n-1},\Delta)(\Q)_{\leqslant B}^+ =\frac{1}{2^{n+1}} \mathop{\sum_{\underline{y}\in \Z_0^{n+1}/\{\pm 1\}}}_{y_i \text{ squarefree}}\#\left\{(x_0:\ldots: x_n)\in Q_{\underline{y}}(\Q)^+: \max_{0\leqslant i\leqslant n}|x_i^2y_i^3|\leqslant B\right\}
\end{equation*}
For a fixed $\underline{y}$, an asymptotic expression for each of the latter sets is known using the classical circle method (see \cite[Chapter 8]{dav}) and a M\"obius inversion for the gcd condition $\gcd(x_0y_0,\ldots,x_ny_n)=$1. 

Moreover, from Lemma \ref{convergentie_D}, it follows that we can change the order of summation for $e$ and $\underline{y}$ in the constant $C$ from Theorem \ref{orbifold} and thus, defining
\begin{equation}\label{eq:constanteQ_y}
C_{Q_{\underline{y}}}=\sum_{e=1}^{\infty}\mathop{\mathop{\sum_{(\underline{e},\underline{f})\in \mathbf{N}^{2n+2}}}_{\gcd(e_if_i,\ i=0,\ldots,n)=e}}_{f_i|y_i}\mu(\underline{e},\underline{f})\frac{2^{n+1}\mathfrak{S}_{\underline{y},\underline{e}^2,0}\mathfrak{J}_{\underline{\varepsilon}}}{\prod_{i=0}^{n}(e_{i}|y_{i}|^{3/2})},
\end{equation}
we have, for $n\geqslant 4$,
\begin{equation*}
\#(\mathbf{P}^{n-1},\Delta)(\Q)_{\leqslant B} \sim \Biggl( \frac{1}{2^{n+1}}\mathop{\sum_{\underline{y}\in \mathbf{Z}_{0}^{n+1}/\{\pm 1\}}}_{y_{i} \text{ squarefree}}C_{Q_{\underline{y}}}\Biggr)\cdot B^{(n-1)/2}
\end{equation*}
as $B$ goes to infinity.

This constant $C_{Q_{\underline{y}}}$ can be given a more geometrical interpretation using 
the adelic space $Q_{\underline{y}}(\mathbb{A}_{\Q})$ of the quadric $Q_{\underline{y}}$, as explained in \cite[\S 5]{Peyre}. Here, it has been shown that the refined version of the Manin conjecture is compatible with the circle method for smooth quadrics in $\mathbf{P}^{n}_{\Q}$ and moreover, that rational points on smooth quadrics are equidistributed.
Considering the Tamagawa measure $\text{\boldmath{$\omega$}}_{H_{\underline{y}}}$ (corresponding to the height function $H_{\underline{y}}$ defined as $H_{\underline{y}}(P)=\max_{0\leqslant i\leqslant n}|x_{i}^2y_{i}^3|$ where $P=(x_{0}:\ldots:x_{n}) \in Q_{\underline{y}}(\Q)$) on $\Q_{\underline{y}}(\mathbb{A}_{\Q})$, the equidistribution of the rational points on $Q_{\underline{y}}$ implies that for every good open subset $W$ (i.e. an open subset $W$ for which $\text{\boldmath{$\omega$}}_{H_{\underline{y}}}(\partial W)=0$, where $\partial W=\overline{W}\setminus W$) of $Q_{\underline{y}}(\mathbb{A}_{\Q})$, it holds that
$$
\frac{\#\left\{P\in Q_{\underline{y}}(\Q)^+\cap W\mid H_{\underline{y}}(P)\leqslant B\right\}}{\#\left\{P\in Q_{\underline{y}}(\Q)^+\mid H_{\underline{y}}(P)\leqslant B\right\}} \rightarrow \frac{\text{\boldmath{$\omega$}}_{H_{\underline{y}}}(W)} {\text{\boldmath{$\omega$}}_{H_{\underline{y}}}(Q_{\underline{y}}(\mathbb{A}_{\Q}))}
$$
as $B$ goes to infinity. We refer to \cite{Peyre} for more details on this matter.
 This implies we can obtain a description of the constant $C_{Q_{\underline{y}}}$ in terms of the measure $\text{\boldmath{$\omega$}}_{H_{\underline{y}}}$ of a certain subset of the adelic space $Q_{\underline{y}}(\mathbb{A}_{\Q})$ of the quadric $Q_{\underline{y}}$. More precisely, it follows that
$$
C_{Q_{\underline{y}}}=\text{\boldmath{$\omega$}}_{H_{\underline{y}}}(Q_{\underline{y}}(\mathbb{A}_{\Q})^{\dag})/(n-1),
$$
where $Q_{\underline{y}}(\mathbb{A}_{\Q})^{\dag}$ denotes the good open subset of $Q_{\underline{y}}(\mathbb{A}_{\Q})$, defined by the gcd condition $\gcd(x_0y_0,\ldots,x_ny_n)=1$ we imposed on $Q_{\underline{y}}(\Q)$. (Note that imposing the open condition $\prod_{i=0}^n x_i\neq 0$ does not change the measure.)
% and that, in the same way we constructed a partition of $(\mathbf{P}^{n-1},\Delta)(\Q)^+$, we obtain that
%\begin{equation}
%(\mathbf{P}^{n-1},\Delta)(\Q)= \mathop{\coprod_{\underline{y}\in \mathbf{Z}_{0}^{n+1}/\{\pm1\}}}_{y_{i}\text{ squarefree}} \pi_{\underline{y}}(Q_{\underline{y}}(\Q)^{\dag}), 
%\label{partitie2}
%\end{equation}
%where $Q_{\underline{y}}(\Q)^{\dag}$ is just the set of points $\underline{x}\in Q_{\underline{y}}(\Q)$ such that $\gcd(x_0y_0,\ldots,x_ny_n)=1$, dropping the open condition $\prod_{i=0}^{n}x_i\neq 0$.)
%Defining $Q_{\underline{y}}(\mathbb{A}_{\Q})^\dag$ as the good open subset of 
%$Q_{\underline{y}}(\mathbb{A}_{\Q})$ defined by this gcd condition, we see that $\text{\boldmath{$\omega$}}_{H_{\underline{y}}}(Q_{\underline{y}}(\mathbb{A}_{\Q})^\dag)=\text{\boldmath{$\omega$}}_{H_{\underline{y}}}(Q_{\underline{y}}(\mathbb{A}_{\Q})^+)$ and thus,
We obtain the following corollary.
\begin{cor}
For $n\geqslant 4$, we have 
\begin{equation}
\#(\mathbf{P}^{n-1}, \Delta)(\Q)_{\leqslant B}\sim \Biggl(\frac{1}{2^{n+1}}\mathop{\sum_{\underline{y}\in \mathbf{Z}_{0}^{n+1}/\{\pm 1\}}}_{y_{i}\text{ squarefree} }C_{Q_{\underline{y}}}\Biggr)\cdot B^{(n-1)/2}
\label{eq:orbi}
\end{equation}
as $B$ goes to infinity, where $C_{Q_{\underline{y}}}=\text{\boldmath{$\omega$}}_{H_{\underline{y}}}(Q_{\underline{y}}(\mathbb{A}_{\Q})^\dag)/(n-1)$. 
\end{cor}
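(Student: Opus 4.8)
The corollary is essentially a synthesis of what has already been developed, so the plan is to assemble it in three moves. First I would invoke Theorem \ref{orbifold}, which gives $\#(\mathbf{P}^{n-1},\Delta)(\Q)_{\leqslant B}\sim C\cdot B^{(n-1)/2}$ together with the explicit triple sum for $C$. Second, I would rewrite $C$ as $\frac{1}{2^{n+1}}\sum_{\underline{y}}C_{Q_{\underline{y}}}$: the iterated sum defining $C$ (over $e$, over $(\underline{e},\underline{f})$, and over squarefree $\underline{y}$) is absolutely convergent — this is precisely the estimate carried out in the proof of Lemma \ref{convergentie_D}, where the tail is dominated by $\prod_p(1+2\cdot 2^{2^{n+1}}/p^{n-1})$ times a convergent sum over $\underline{y}$, and $n\geqslant 4$ is what makes this work — so Fubini lets me pull the $\underline{y}$-summation outermost. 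For each fixed squarefree $\underline{y}$, the remaining $(e,\underline{e},\underline{f})$-sum is exactly the series \eqref{eq:constanteQ_y} defining $C_{Q_{\underline{y}}}$: here one uses that $\mu(\underline{e},\underline{f})\neq 0$ forces each $f_i$ squarefree with $f_i\mid y_i$, and absorbs the cube $f_i^3$ into $y_i^3$ both in the normalizing product and in the singular series, via $\mathfrak{S}_{\underline{y},\underline{e^2f^3},0}=\mathfrak{S}_{\underline{y},\underline{e}^2,0}$. This already yields the equivalence \eqref{eq:orbi} with constant $\frac{1}{2^{n+1}}\sum_{\underline{y}}C_{Q_{\underline{y}}}$.

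Third — the substantive point — I would identify each $C_{Q_{\underline{y}}}$ with the Tamagawa volume $\text{\boldmath{$\omega$}}_{H_{\underline{y}}}(Q_{\underline{y}}(\mathbb{A}_{\Q})^{\dag})/(n-1)$. Fix a squarefree $\underline{y}$. By \eqref{partitie2}, the morphism $\pi_{\underline{y}}$ of \eqref{morphisms} realizes the $\underline{y}$-component of $(\mathbf{P}^{n-1},\Delta)(\Q)^+$ as $\pi_{\underline{y}}(Q_{\underline{y}}(\Q)^+)$, so that counting points of height $\leqslant B$ on the smooth diagonal quadric $Q_{\underline{y}}$ defined by $\sum_i y_i^3X_i^2=0$, subject to $\gcd(x_0y_0,\ldots,x_ny_n)=1$, is a standard circle-method count (see \cite[Chapter 8]{dav}) whose leading constant — once the M\"obius machinery of Section \ref{section:mobius} has resolved the gcd condition — is exactly $C_{Q_{\underline{y}}}$, built from the singular integral $\mathfrak{I}_{\underline{\varepsilon}}$ (the archimedean density) and the singular series, which by Remark \ref{convergentiesingularseries} is the convergent Euler product of the $p$-adic densities of $Q_{\underline{y}}$. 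The M\"obius weights merely truncate these local densities to the locus cut out at each place by $\gcd(x_0y_0,\ldots,x_ny_n)=1$; this is an open condition whose boundary has measure zero, so $Q_{\underline{y}}(\mathbb{A}_{\Q})^{\dag}$ is a good open subset, and adjoining $\prod_i x_i\neq 0$ removes only a null set. Finally, \cite[\S 5]{Peyre} establishes that for smooth quadrics in $\mathbf{P}^n_{\Q}$ the circle-method constant agrees with Peyre's Tamagawa number — the factor $1/(n-1)$ being the value of Peyre's $\alpha$-invariant, which reflects that $-K_{Q_{\underline{y}}}$ equals $(n-1)$ times the hyperplane class underlying $H_{\underline{y}}$ — and that rational points on $Q_{\underline{y}}$ equidistribute with respect to $\text{\boldmath{$\omega$}}_{H_{\underline{y}}}$. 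Applying the displayed equidistribution identity with $W=Q_{\underline{y}}(\mathbb{A}_{\Q})^{\dag}$ then gives the asserted formula for $C_{Q_{\underline{y}}}$, and substituting into the equivalence from the second move completes the proof.

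The main obstacle is exactly this third move: reconciling all the normalizations — heights, local Haar measures versus the global Tamagawa measure, convergence factors — so that the raw circle-method constant $C_{Q_{\underline{y}}}$ coincides on the nose with $\text{\boldmath{$\omega$}}_{H_{\underline{y}}}(Q_{\underline{y}}(\mathbb{A}_{\Q})^{\dag})/(n-1)$, getting the $\alpha$-factor $1/(n-1)$ right, and verifying that the coprimality condition genuinely defines a good open adelic subset whose volume factors as the product of the truncated local densities. These are precisely the matters treated in \cite{Peyre}, so the real work is matching conventions rather than producing any new analytic estimate; with that granted, the corollary is immediate.
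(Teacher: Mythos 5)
Your proposal follows exactly the same route as the paper: invoke Theorem~\ref{orbifold}, rewrite $C$ as $\tfrac{1}{2^{n+1}}\sum_{\underline{y}}C_{Q_{\underline{y}}}$ by interchanging the $e$- and $\underline{y}$-summations (justified by the absolute convergence established in Lemma~\ref{convergentie_D}), and then identify each $C_{Q_{\underline{y}}}$ with $\text{\boldmath{$\omega$}}_{H_{\underline{y}}}(Q_{\underline{y}}(\mathbb{A}_{\Q})^{\dag})/(n-1)$ via Peyre's compatibility of the circle method with the refined Manin conjecture for smooth quadrics, the factor $1/(n-1)$ being Peyre's $\alpha$-invariant. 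This is precisely the synthesis the paper carries out in Section~\ref{sec:orbi}, so the approach matches on all essential points.
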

\subsection{The adelic space of the orbifold $(\mathbf{P}^{n-1},\Delta)$}
In order to define the adelic space of the orbifold properly, we first have to explain how we can translate the definition of \lq squarefulness\rq\ to the different completions of $\Q$. 

At each finite place $v=p$, a $p$-adic integer $a \in \Z_{p}$ is squareful if $v_{p}(a)\neq 1$. Due to the structure of $\Q_p^{\times}$, this means that we can write a squareful $p$-adic integer $a$ uniquely as $x^2y^3$, with $x\in \Z_p^\times$ and $y \in \Z$ squarefree. 

On the other hand, any real number $a\in \R$ can we written as $(\pm1)^3x^2$ and ought to be considered as squareful.

Since we identified $(\mathbf{P}^{n-1},\Delta)(\Q)$ with $\left\{(u_0:\ldots:u_n)\in H(\Q)\ |\ u_i \text{ is squareful for each }i\right\}$ (recall $H\subset \mathbf{P}^{n}$ is the hyperplane defined by $X_0+\cdots+X_n=0$), we have, for each $v\in \Val(\Q)$, that
\begin{align*}
(\mathbf{P}^{n-1},\Delta)(\Q_v)&=\{(u_0:\ldots:u_n)\in H(\Q_v): u_i\text{ is squareful for each } i\}\\
&=\{(x_{0,v}^2y_0^3:\ldots:x_{n,v}^2y_n^3)\in H(\Q_v): \underline{y}\in \Z_0^{n+1}/\{\pm 1\},\ y_i\text{ squarefree}\}.
\end{align*}
This implies, recalling the definition of $\pi_{\underline{y}}$ in \eqref{morphisms},
\begin{equation}
(\mathbf{P}^{n-1},\Delta)(\Q_v)=\mathop{\bigcup_{\underline{y}\in \mathbf{Z}_{0}^{n+1}/\{\pm1\}}}_{y_{i}\text{ squarefree}}\pi_{\underline{y}}(Q_{\underline{y}}(\Q_v)^{\dag}),
\end{equation}
where for a finite place $v=p$, $Q_{\underline{y}}(\Q_p)^{\dag}$ is the open subset of $Q_{\underline{y}}(\Q_p)$ defined by the condition $\min_{0\leqslant i\leqslant n}(v_p(x_{i,p}y_{i}))=0$ and where $Q_{\underline{y}}(\R)^{\dag}=Q_{\underline{y}}(\R)$. 

Note that the considered union is not disjoint, but that the image for different $\underline{y}$ and $\underline{y}'$ either coincides or is disjoint. Hence, it follows that, at each place $v \in \Val(\Q)$, $(\mathbf{P}^{n-1},\Delta)(\Q_{v})$ can be described as a finite disjoint union of sets $\pi_{\underline{y}}(Q_{\underline{y}}(\Q_{v})^{\dag})$ for specified $\underline{y} \in \mathbf{Z}_{0}^{n+1}/\{\pm 1\}$.

\begin{definition}\label{adelic_space_orbifold} We define the adelic space $(\mathbf{P}^{n-1},\Delta)(\mathbb{A}_{\Q})$ as 
$$
(\mathbf{P}^{n-1},\Delta)(\mathbb{A}_{\Q})=\prod_{v\in \Val(\Q)}(\mathbf{P}^{n-1},\Delta)(\Q_{v}).
$$
\end{definition}
%(See Figure \ref{adelic space}.)
%\begin{figure}
%\centering
%\psset{xunit=.07\textwidth,yunit=.07\textwidth}
%\begin{pspicture}(0,0.3)(14,7.2)
%%\psgrid
%\psline(0,1)(14,1)
%\multido{\Ix=1+4,\IIx=2+4,\IIIx=3+4,\N=1.66+4.00,\NN=2.33+4.00}{2}{
%\psline(\Ix,2)(\Ix,6)
%\psline(\IIx,2.5)(\IIx,6.5)
%\psline(\IIIx,3)(\IIIx,7)
%\multido{\i=2+1,\n=1.6+1.0,\nn=3.4+1.0,\ii=3+1}{5}{\psbezier(\Ix,\i)(\N,\n)(\NN,\nn)(\IIIx,\ii)}}
%\psline(11,2)(11,6)
%\psline(12,2.5)(12,6.5)
%\psline(13,3)(13,7)
%\psbezier(11,2)(11.66,1.6)(12.33,3.4)(13,3)
%\psbezier(11,3.33)(11.66,2.93)(12.33,4.73)(13,4.33)
%\psbezier(11,4.66)(11.66,4.26)(12.33,6.06)(13,5.66)
%\psbezier(11,6)(11.66,5.6)(12.33,7.4)(13,7)
%\psdots(2,1)(6,1)(12,1)
%\rput(9,4.5){$\cdots$}
%\rput(2,.6){$2$}\rput(6,.6){$3$}\rput(9,.6){$\cdots$}\rput(12,.6){$\infty$}\rput[r](14,.6){$\overline{\Spec\mathbf{Z}}$}
%\end{pspicture}
%\caption{Picture of $(\mathbf{P}^{n-1},\Delta)(\mathbb{A}_{\Q})$}
%\label{adelic space}
%\end{figure}
\begin{remark}
One may prove that $(\mathbf{P}^{n-1},\Delta)(\Q)$ is dense in $(\mathbf{P}^{n-1},\Delta)(\mathbb{A}_{\Q})$. This follows from the fact that weak approximation holds for smooth quadrics. 
\end{remark}
\subsection{Distribution of rational points on $(\mathbf{P}^{n-1},\Delta)$} We can now consider the probability measure 
\begin{equation}\label{eq:prob_measure}
\mu_{H\leqslant B}^{(\mathbf{P}^{n-1},\Delta)}=\frac{1}{\#(\mathbf{P}^{n-1},\Delta)(\Q)_{\leqslant B}}\mathop{\sum_{P\in (\mathbf{P}^{n-1},\Delta)(\Q)}}_{H(P)\leqslant B}\delta_{P}
\end{equation}
on $(\mathbf{P}^{n-1},\Delta)(\mathbb{A}_{\Q})$. Here, we will investigate the convergence of $\mu_{H\leq B}^{(\mathbf{P}^{n-1},\Delta)}$ to a specific measure on the adelic space of the orbifold which we have yet to define, when $B$ goes to infinity. Keeping in mind the description of $(\mathbf{P}^{n-1},\Delta)(\mathbb{A}_{\Q})$ we gave above, we can define this measure in the following natural way. 
\begin{definition}
We define the measure $\text{\boldmath{$\omega$}}_{(\mathbf{P}^{n-1},\Delta)}$ on $(\mathbf{P}^{n-1},\Delta)(\mathbb{A}_{\Q})$ as
\begin{equation}
\text{\boldmath{$\omega$}}_{(\mathbf{P}^{n-1},\Delta)}(U)=\mathop{\sum_{\underline{y}\in \mathbf{Z}_{0}^{n+1}/\{\pm1\}}}_{y_{i}\text{ squarefree}}\text{\boldmath{$\omega$}}_{H_{\underline{y}}}(\pi_{\underline{y}}^{-1}(U)),
\label{measure}
\end{equation}
where $U$ is an open subset of $(\mathbf{P}^{n-1},\Delta)(\mathbb{A}_{\Q})$ (which is equipped with the subspace topology coming from $H(\mathbb{A}_{\Q})$) and $\pi_{\underline{y}}:Q_{\underline{y}}(\mathbb{A}_{\Q})^{\dag} \rightarrow (\mathbf{P}^{n-1},\Delta)(\mathbb{A}_{\Q})$. (Note that the morphisms $\pi_{\underline{y}}$, introduced in \eqref{morphisms}, define continuous maps 
$$
\begin{array}{ccccc}
\pi_{\underline{y}}&:&Q_{\underline{y}}(\mathbb{A}_\Q) &\rightarrow & H(\mathbb{A}_{\Q})\\
\end{array}
$$ 
which map $Q_{\underline{y}}(\mathbb{A}_{\Q})^{\dag}$ into $(\mathbf{P}^{n-1},\Delta)(\mathbb{A}_{\Q})$.)
\end{definition}
\begin{remark}
From this definition of the measure $\text{\boldmath{$\omega$}}_{(\mathbf{P}^{n-1},\Delta)}$, it follows that its support consists of the (disjoint) union of 
\begin{equation}
 \pi_{\underline{y}}(Q_{\underline{y}}(\mathbb{A}_{\Q})^{\dag}) 
\label{eq:adellen}
\end{equation}
for all $\underline{y}\in \mathbf{Z}_{0}^{n+1}/\{\pm1\}$ with $y_{i}$ squarefree for each $i\in \{0,\ldots,n\}$.
This is a proper subset of $(\mathbf{P}^{n-1}, \Delta)(\mathbb{A}_{\Q})$.
\end{remark}
In order to say something about the convergence of $\mu_{H\leqslant B}^{(\mathbf{P}^{n-1},\Delta)}$, we first define elementary open subsets of $(\mathbf{P}^{n-1},\Delta)(\mathbb{A}_{\Q})$. 

An elementary open subset $W$ of $H(\mathbb{A}_{\Q})$ can be defined as 
$$
W=\prod_{v\in \mbox{Val}(\Q)} W_{v},
$$
such that $W_{v} \subset H(\Q_{v})$ is defined at finitely many finite places as $W_{p}=\text{red}_{M}^{-1}(X_{p})$, where $X_{p} \subset H(\mathbf{Z}/p^{M}\mathbf{Z})$ is a subset of $H(\mathbf{Z}/p^M\mathbf{Z})$ and $\text{red}_{M}: H(\Q_{p}) \rightarrow H(\mathbf{Z}/p^{M}\mathbf{Z})$; $W_{p}=H({\Q}_{p})$ for any other finite place. Furthermore, at the infinite place $v=\infty$, we require 
$W_{\infty}= \bigcap_{i,j} (\lambda_{i,j}x_{i}<x_{j}) \subset H(\mathbf{R})$ fixing one of the coordinates $x_i$ to one. Here, $\lambda_{i,j}\in \mathbf{R}_{>0}$, depending on $i$ and $j$. 

To construct elementary open subsets on $(\mathbf{P}^{n-1},\Delta)(\mathbb{A}_{\Q})$, we can take the intersection with elementary open subsets of $H(\mathbb{A}_{\Q})$.
% Thus, for an elementary open subset $W=\prod_{v\in \Val(\Q)}W_v$ of $(\mathbf{P}^{n-1},\Delta)(\mathbb{A}_{\Q})$, it holds for finitely many finite places $v=p$ that $W_p=\text{red}_M^{-1}(X_p)$ where $\text{red}_M: (\mathbf{P}^{n-1},\Delta)(\Q_p)\rightarrow (\mathbf{P}^{n-1},\Delta)(\mathbf{Z}/p^M\mathbf{Z})$ is the reduction map and $X_p$ a subset of $(\mathbf{P}^{n-1},\Delta)(\mathbf{Z}/p^M\mathbf{Z})$. Recall that $(\mathbf{P}^{n-1},\Delta)(\Q_p)$ can be identified with the set of points $\underline{a}=(a_0:\ldots:a_n)\in H(\Q_p)$ such that $\underline{a} \in \mathbf{Z}_p^{n+1}$, $\min_i\{v_p(a_i)\}=0$ and $v_p(a_i)\neq 1$ for each $i\in\{0,\ldots,n\}$. Here, $(\mathbf{P}^{n-1},\Delta)(\mathbf{Z}/p^M\mathbf{Z})$ is defined as the image of $(\mathbf{P}^{n-1},\Delta)(\Q_p)$ under the reduction map defined on $H(\Q_p)$. For any other finite place, $W_p= (\mathbf{P}^{n-1},\Delta)(\Q_p)$. For the infinite place $v=\infty$, we know that $(\mathbf{P}^{n-1},\Delta)(\mathbf{R})=H(\mathbf{R})$, so again $W_\infty=\bigcap_{i,j} (\lambda_{i,j}x_{i}<x_{j}) \subset H(\mathbf{R})$, where we fixed one of the coordinates to one.

We will now prove the following theorem.
\begin{theo}\label{equidistribution} For every elementary open subset $U$ of $(\mathbf{P}^{n-1},\Delta)(\mathbb{A}_{\Q})$ it holds that 
$$
\mu_{H\leqslant B}^{(\mathbf{P}^{n-1},\Delta)}(U) \rightarrow \frac{\text{\boldmath{$\omega$}}_{(\mathbf{P}^{n-1},\Delta)}(U)}{\text{\boldmath{$\omega$}}_{(\mathbf{P}^{n-1},\Delta)}((\mathbf{P}^{n-1},\Delta)(\mathbb{A}_\Q))}
$$
as $B$ goes to infinity.
\end{theo}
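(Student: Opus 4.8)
The plan is to deduce Theorem \ref{equidistribution} from an asymptotic point count obtained by re-running the circle-method argument developed above with the conditions defining the elementary open set $U$ built in, and then letting $B\to\infty$ in the ratio $\mu_{H\leqslant B}^{(\mathbf{P}^{n-1},\Delta)}(U)$. As a preliminary step, by the argument in the proof of Theorem \ref{orbifold} the points of $(\mathbf{P}^{n-1},\Delta)(\Q)_{\leqslant B}$ with a vanishing coordinate number $\ll B^{(n-2)/2+\varepsilon}=o(B^{(n-1)/2})$, hence are negligible in both the numerator and the denominator of $\mu_{H\leqslant B}^{(\mathbf{P}^{n-1},\Delta)}(U)$; so it suffices to work inside $(\mathbf{P}^{n-1},\Delta)(\Q)^{+}$. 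Via the identification \eqref{eq:orbifold}, a point of $(\mathbf{P}^{n-1},\Delta)(\Q)^{+}\cap U$ corresponds to a tuple $(\underline{x},\underline{y})$ counted by $\#M(B)$ subject to two further families of constraints read off from $U$: at the finitely many primes $p$ entering the definition of $U$, a congruence on $(\underline{x},\underline{y})$ modulo a fixed power $p^{M}$ (pull back $X_p\subset H(\Z/p^M\Z)$ through $(x_i,y_i)\mapsto x_i^2y_i^3$, using that $y_i$ is squarefree), and at the archimedean place the real inequalities coming from $W_\infty$, which become inequalities among the monomials $x_i^2y_i^3$ after the substitution $a_i=x_i^2y_i^3$.

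The next step is to redo the proof of Theorem \ref{theo:final} (hence of Theorems \ref{M_{a,t}(B)} and \ref{theorem_major}) in the presence of these constraints. On the major arcs the congruence conditions only make the local factors of the singular series acquire congruence restrictions at the finitely many special primes, changing them by factors bounded uniformly in $\underline{y}$, so Lemma \ref{lem:singular_series}, the bound \eqref{eq:upper} and the summation estimate \eqref{tau-functie}---which is where the decay in $\underline{y}$ lives---all survive; the archimedean inequalities replace the box $D_0\times\cdots\times D_n$ of \eqref{defD_i} by the sub-polytope they cut out, hence the singular integral $\mathfrak{I}_{\underline{\varepsilon}}$ by its integral over that polytope, still convergent with the same estimates used in Proposition \ref{prop:integraal} and Lemma \ref{singuliere_integraal} (a sub-interval $[c,d]\subseteq[0,1]$ still yields $\bigl|\int_c^d e(\gamma\varepsilon x^2)\,dx\bigr|\ll\min\{1,|\gamma|^{-1/2}\}$). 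On the minor arcs the congruence conditions cost only a factor $O_U(1)$ in the relevant counts and exponential sums, while the archimedean conditions merely shrink the ranges of the $S_i(\alpha)$ (and if some range falls below the threshold needed in the proof of Proposition \ref{kleinste}, the trivial bound already puts that sum below $B^{1/2-\delta}$), so Lemma \ref{lemma} and Proposition \ref{kleinste} apply unchanged. Performing the M\"obius inversion for $\gcd(x_0y_0,\ldots,x_ny_n)=1$ exactly as in Section \ref{section:mobius} and reusing the convergence argument of Lemma \ref{convergentie_D} (which uses only the $\underline{y}$-uniform bounds above), one obtains
$$
\#\bigl\{P\in(\mathbf{P}^{n-1},\Delta)(\Q)^{+}\cap U:H(P)\leqslant B\bigr\}=C_U\cdot B^{(n-1)/2}+O\!\left(B^{(n-1)/2-\delta}\right)
$$
for some explicit constant $C_U\geqslant 0$.

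It then remains to identify $C_U$. Unwinding the conditioned singular series and singular integral place by place and comparing, quadric by quadric, with the adelic description of the circle-method constant for the $Q_{\underline{y}}$ given in \cite[\S 5]{Peyre} (the same input that yields the identity $C_{Q_{\underline{y}}}=\text{\boldmath{$\omega$}}_{H_{\underline{y}}}(Q_{\underline{y}}(\mathbb{A}_\Q)^{\dag})/(n-1)$ above), one finds $C_U=\kappa\sum_{\underline{y}}\text{\boldmath{$\omega$}}_{H_{\underline{y}}}\bigl(\pi_{\underline{y}}^{-1}(U)\bigr)$, where $\pi_{\underline{y}}^{-1}(U)$ is the good open subset of $Q_{\underline{y}}(\mathbb{A}_\Q)^{\dag}$ cut out by the pulled-back conditions and $\kappa$ is the same normalization constant (of value $1/(2^{n+1}(n-1))$, but immaterial here as it cancels in the ratio) that produces the constant $C$ of Theorem \ref{orbifold}; by \eqref{measure} this reads $C_U=\kappa\,\text{\boldmath{$\omega$}}_{(\mathbf{P}^{n-1},\Delta)}(U)$. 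Taking $U=(\mathbf{P}^{n-1},\Delta)(\mathbb{A}_\Q)$ recovers $C=\kappa\,\text{\boldmath{$\omega$}}_{(\mathbf{P}^{n-1},\Delta)}\bigl((\mathbf{P}^{n-1},\Delta)(\mathbb{A}_\Q)\bigr)$, and dividing the two asymptotics gives
$$
\mu_{H\leqslant B}^{(\mathbf{P}^{n-1},\Delta)}(U)=\frac{C_U B^{(n-1)/2}+O(B^{(n-1)/2-\delta})}{C\,B^{(n-1)/2}+O(B^{(n-1)/2-\delta})}\longrightarrow\frac{C_U}{C}=\frac{\text{\boldmath{$\omega$}}_{(\mathbf{P}^{n-1},\Delta)}(U)}{\text{\boldmath{$\omega$}}_{(\mathbf{P}^{n-1},\Delta)}\bigl((\mathbf{P}^{n-1},\Delta)(\mathbb{A}_\Q)\bigr)}
$$
as $B\to\infty$, which is the assertion.

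The step I expect to be the main obstacle is the passage to the limit over the infinite family of quadrics $\{Q_{\underline{y}}\}$: one must verify that inserting the conditions coming from $U$ destroys neither the $\underline{y}$-uniformity of the error term nor the summability over $\underline{y}$, and this is exactly why the argument is best carried out inside the circle method of Section \ref{section:mobius}---where the $\underline{y}$-dependence is controlled through \eqref{eq:upper}--\eqref{tau-functie} and the product estimate of Lemma \ref{convergentie_D}---rather than by quoting the equidistribution statement for a single quadric from \cite{Peyre} one $\underline{y}$ at a time. A minor, routine point will be to check that $\pi_{\underline{y}}^{-1}(U)$ is a good open set (boundary of measure zero), which is clear since it is cut out by finitely many congruences and finitely many polynomial inequalities, and that its defining data are of the shape to which the circle-method analysis applies.
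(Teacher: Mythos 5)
Your proof is correct in outline but takes a genuinely different route from the paper. The paper's proof is very brief: it partitions the count over the quadrics $Q_{\underline{y}}$ via~\eqref{partitie2}, and then invokes the equidistribution of rational points on each smooth quadric $Q_{\underline{y}}$ (from \cite{Peyre}), the definition of the measure~\eqref{measure}, and Theorem~\ref{orbifold} to pass to the limit in the ratio. In other words, the paper does exactly what you dismissed in your last paragraph as not ``best''--- it applies the single-quadric equidistribution statement for each $\underline{y}$ and then sums. The interchange of the sum over $\underline{y}$ with the limit $B\to\infty$, which you correctly flag as the delicate point, is handled implicitly: the required dominated-convergence input is supplied by the uniform upper bounds already established (the $\underline{y}$-decay coming from \eqref{eq:upper}, \eqref{tau-functie} and the uniform bound of Remark~\ref{opmerking}), together with the fact that Theorem~\ref{orbifold} already gives the asymptotic of the denominator. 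Your alternative --- rerunning the circle method of Sections~\ref{section:mobius} with the local conditions defining $U$ incorporated from the start, and then identifying the resulting constant with the adelic measure via Peyre's compatibility result for the constant (rather than via equidistribution) --- is also valid and has the merit of making the $\underline{y}$-uniformity fully explicit and producing a single asymptotic with a power-saving error term for the numerator, at the cost of reproving estimates that the paper instead reuses as black boxes. Both strategies rest on the same two ingredients, the circle-method estimates of Sections~3--4 and Peyre's analysis of smooth quadrics in \cite[\S 5]{Peyre}; what differs is whether you import Peyre's equidistribution theorem directly (the paper) or only his identification of the circle-method constant with a Tamagawa measure (your version). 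You might also note that your claim ``the congruence conditions cost only a factor $O_U(1)$'' on the minor arcs is correct but deserves a sentence of justification (passing to a complete exponential sum over residues modulo $p^M$), and that the nonnegativity of $C_U$ and the goodness of $\pi_{\underline{y}}^{-1}(U)$ are precisely the ``Straightforward calculations'' the paper alludes to at the start of its proof.
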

\begin{proof}
Straightforward calculations show that for each admitted $\underline{y}$, the inverse image $\pi_{\underline{y}}^{-1}(U)$ of an elementary open subset $U$ of $(\mathbf{P}^{n-1},\Delta)(\mathbb{A}_{\Q})$ defines a good open subset of $Q_{\underline{y}}(\mathbb{A}_{\Q})^{\dag}$ .

Now let $U$ be an elementary open subset of $(\mathbf{P}^{n-1},\Delta)(\mathbb{A}_{\Q})$. Recalling \eqref{eq:prob_measure}, the partition of $(\mathbf{P}^{n-1},\Delta)(\Q)^+$ in \eqref{partitie2} and Theorem \ref{orbifold}, we get
\begin{align*}
\mu_{H\leqslant B}^{(\mathbf{P}^{n-1},\Delta)}(U)&=\frac{\#\left\{(u_0:\ldots:u_n)\in (\mathbf{P}^{n-1},\Delta)(\Q)\cap U: \max_{0\leqslant i\leqslant n}|u_{i}|\leqslant B\right\}}{\#(\mathbf{P}^{n-1},\Delta)(\Q)_{\leqslant B}}\\
&\sim \frac{\sum_{\underline{y}}\#\left\{(x_0:\ldots:x_n)\in Q_{\underline{y}}(\Q)^{+} \cap \pi_{\underline{y}}^{-1}(U): \max_{0\leqslant i\leqslant n}|y_i^3x_i^2|\leqslant B\right\}}{\sum_{\underline{y}}\#\left\{(x_0:\ldots:x_n)\in Q_{\underline{y}}(\Q)^{+}: \max_{0\leqslant i\leqslant n}|y_i^3x_i^2|\leqslant B\right\}}. 
\end{align*}
(Here, we used the abbreviated notation $\sum_{\underline{y}}$ to sum over all admitted $\underline{y}\in \mathbf{Z}_0^{n+1}$.)

Combining the fact that rational points on smooth quadrics are equidistributed, the definition of the measure in \eqref{measure} and Theorem \ref{orbifold} enables us to complete the proof. 
%\begin{align*}
%\mu_{H\leqslant B}^{(\mathbf{P}^{n-1},\Delta)}(U)&\rightarrow \frac{\sum_{\underline{y}}\text{\boldmath{$\omega$}}_{H_{\underline{y}}}( \pi_{\underline{y}}^{-1}(U))}{\sum_{\underline{y}}\text{\boldmath{$\omega$}}_{H_{\underline{y}}}(Q_{\underline{y}}(\mathbb{A}_{\Q})^{\dag})}\\
%&=\frac{\text{\boldmath{$\omega$}}_{(\mathbf{P}^{n-1},\Delta)}(U)}{\text{\boldmath{$\omega$}}_{(\mathbf{P}^{n-1},\Delta)}((\mathbf{P}^{n-1},\Delta)(\mathbb{A}_\Q))}
%\end{align*}
%as $B$ goes to infinity.
\end{proof}

\bibliographystyle{alpha}
\bibliography{arcs}
\end{document}